\newcommand{\ph}{\varphi}
\newcommand{\eps}{\varepsilon}
\newcommand{\ulim}{\varlimsup}
\newcommand{\llim}{\varliminf}
\newcommand{\EE}{\mathbb{E}}
\newcommand{\NN}{\mathbb{N}}
\newcommand{\RR}{\mathbb{R}}
\newcommand{\FFF}{\mathcal{F}}
\newcommand{\EEE}{\mathcal{E}}
\newcommand{\SSS}{\mathcal{S}}
\newcommand{\GGG}{\mathcal{G}}
\newcommand{\WWW}{\mathcal{W}}
\newcommand{\bW}{\overline{\WWW}}
\newcommand{\htop}{h_{\mathrm{top}}}
\DeclareMathOperator{\Leb}{Leb}
\newcommand{\Es}{E^\mathrm{s}}
\newcommand{\Eu}{E^\mathrm{u}}
\newcommand{\Ws}{W^\mathrm{s}}
\newcommand{\Wu}{W^\mathrm{u}}
\newcommand{\Wcs}{W^\mathrm{cs}}
\newcommand{\Wcu}{W^\mathrm{cu}}
\newcommand{\ms}{m^\mathrm{s}}
\newcommand{\mmu}{m^\mathrm{u}}
\newcommand{\nnu}{\nu^\mathrm{u}}
\newcommand{\mcu}{m^\mathrm{cu}}
\newcommand{\mcs}{m^\mathrm{cs}}
\newcommand{\ns}{\nu^\mathrm{s}}
\newcommand{\ncs}{\nu^\mathrm{cs}}
\newcommand{\bmu}{\bar{m}^\mathrm{u}}
\newcommand{\bmcs}{\bar{m}^\mathrm{cs}}
\newcommand{\bmcu}{\bar{m}^\mathrm{cu}}
\newcommand{\bms}{\bar{m}^\mathrm{s}}
\newcommand{\phu}{\ph^\mathrm{u}}
\newcommand{\Phu}{\Phi^\mathrm{u}}
\newcommand{\Ru}{R^\mathrm{u}}
\newcommand{\Rcs}{R^\mathrm{cs}}
\DeclareMathOperator{\supp}{supp}
\DeclareMathOperator{\diam}{diam}
\DeclareMathOperator{\graph}{graph}
\newtheorem{theorem}{Theorem}[section]
\newtheorem{lemma}[theorem]{Lemma}
\newtheorem{proposition}[theorem]{Proposition}
\newtheorem{corollary}[theorem]{Corollary}
\newtheorem*{thma*}{Theorem}
\theoremstyle{remark}
\newtheorem{remark}[theorem]{Remark}
\theoremstyle{definition}
\newtheorem{definition}[theorem]{Definition}
\numberwithin{equation}{section}
\begin{document}

\title{SRB and equilibrium measures via dimension theory}
\author{Vaughn Climenhaga}
\address{Dept.\ of Mathematics, University of Houston, Houston, TX 77204}
\email{climenha@math.uh.edu}
\date{\today}

\subjclass[2010]{Primary: 37D35, 37C45; Secondary: 37C40, 37D20}
\thanks{The author was partially supported by NSF grant DMS-1554794.}

\begin{abstract}
It is well-known that SRB and equilibrium measures for uniformly hyperbolic flows admit a product structure in terms of measures on stable and unstable leaves with scaling properties given by the potential function. We describe a construction of these leaf measures analogous to the definition of Hausdorff measure, relying on the Pesin--Pitskel' description of topological pressure as a dimensional characteristic using Bowen balls. These leaf measures were constructed for discrete-time systems by the author, Ya.\ Pesin, and A.\ Zelerowicz.  In the continuous-time setting here, the description of the scaling properties is more complete, and we use a similar procedure with two-sided Bowen balls to directly produce the equilibrium measure itself. 
\end{abstract}

\maketitle

\begin{center}
\emph{For Tolya, who taught me to learn things a second time.}
\end{center}

\section{Introduction}

\subsection{Overview}

Our story takes place 
where dynamical systems meets
two other major theories: thermodynamics and dimension theory. 
In dimension theory, Hausdorff dimension is defined via a family of measures with prescribed scaling behavior when we zoom in geometrically; for a given set, exactly one of these measures has the correct scaling properties. In thermodynamic formalism, one can use a similar approach to define topological entropy \cite{rB73} and topological pressure \cite{PP84} via a family of measures that have prescribed scaling behavior when we zoom in dynamically. Exactly one of these measures has the correct scaling properties.

The rest of this paper explores the following idea, introduced in \cite{CPZ,CPZ2}: \emph{for a uniformly hyperbolic system, the measure produced by this approach from dimension theory can be used to describe the equilibrium measure in thermodynamic formalism and its conditional measures on stable and unstable leaves}.

We emphasize that the leaf measures we study, and their relationship to the equilibrium measure, are by no means new; see \S\ref{sec:lit}. The novelty here consists of the following two aspects:
\begin{itemize}
\item the observation here and in \cite{CPZ,CPZ2} that these measures can be constructed using tools from dimension theory (though this was foreshadowed by \cite{uH89,bH89} which studied the measure of maximal entropy);
\item the adaptation of this technique to give a direct formula for the equilibrium measure itself, without working on the leaves.
\end{itemize} 
To illustrate this second point, we give a new construction for the Sinai--Ruelle--Bowen (SRB) measure on a hyperbolic attractor; see \S\ref{sec:def} for full definitions, and \S\ref{sec:two-sided} for an extension to equilibrium measures.
The construction uses a version of ``two-sided Bowen balls'' $B_{s,t}^*(x,r)$ whose definition requires
the following local product structure of a hyperbolic attractor $\Lambda$ for a smooth flow $(f_t)_{t\in\RR}$: there is a constant $L>0$ such that for all sufficiently small $r>0$, any $x\in \Lambda$, and any $z\in B(x,r)$, there is a point $y$ on the local unstable manifold of $z$ such that $f_\beta(y)$ is on the local stable manifold of $x$ for some $\beta = \beta(x,z) \in [-Lr,Lr]$. For large values of $s,t>0$, we will use the sets
\[
B^*_{s,t}(x,r) := \Big\{z\in \Lambda : \sup_{\tau \in [-s,t]} d(f_\tau z, f_\tau x) < r \text{ and } |\beta(x,z)| < \frac{r}{s+t} \Big\}.
\]

\begin{theorem}\label{thm:srb-main}
Let $M$ be a Riemannian manifold and $\Lambda\subset M$ a hyperbolic attractor for a smooth flow $(f_t)_{t\in\RR}$. 
Then for all sufficiently small $r>0$,
there is $c>0$ such that the SRB measure $\mu$ on $\Lambda$ is given by
\[
\mu(Z) = \lim_{T\to\infty} \inf \sum_{(x,s,t)\in \EEE}
\frac{c}{s+t} \det (Df_{s+t}|_{\Eu_{f_{-s}(x)}})^{-1}
\]
for all Borel $Z\subset \Lambda$,
where the infimum is over all $\EEE \subset \Lambda \times [T,\infty)^2$ such that $Z \subset \bigcup_{(x,s,t) \in \EEE} B^*_{s,t}(x,r)$.
\end{theorem}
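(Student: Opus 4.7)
The plan is to identify the Carath\'eodory-type outer measure defined by the right-hand side of the formula with the SRB measure $\mu$ up to a multiplicative constant, and then to use ergodicity of $\mu$ to fix that constant.

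\textbf{Step 1 (Scaling of $\mu$ on two-sided Bowen balls).} The core estimate is
\[
\mu(B^*_{s,t}(x,r)) \asymp \frac{1}{s+t}\det(Df_{s+t}|_{\Eu_{f_{-s}(x)}})^{-1}
\]
uniformly in $x\in\L$ and $s,t \geq T_0$, for all sufficiently small $r > 0$. Fix $x$ and work in a local product chart. The two-sided Bowen ball decomposes, up to bounded distortion, as a product of a forward Bowen piece in the unstable direction (unstable Lebesgue volume $\asymp\det(Df_t|_{\Eu_x})^{-1}$) together with a cs-slice, itself a product of a backward Bowen piece in the stable direction and a flow-direction interval of length $\asymp r/(s+t)$ coming from the $|\beta(x,z)|<r/(s+t)$ cutoff. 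Disintegrating $\mu$ along local unstable leaves gives the unstable conditional as $\rho^u\,d\Leb^u$ with continuous density (the defining SRB property) and a transverse quotient $\nu^{cs}$; flow invariance of $\mu$ together with the unstable-Jacobian transformation law of $\nu^{cs}$ under stable holonomy force the scaling $\nu^{cs}(\text{cs-slice}) \asymp \det(Df_s|_{\Eu_{f_{-s}(x)}})^{-1}/(s+t)$. The chain rule $\det(Df_{s+t}|_{\Eu_{f_{-s}(x)}})=\det(Df_s|_{\Eu_{f_{-s}(x)}})\det(Df_t|_{\Eu_x})$ combines the two contributions into the announced form.

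\textbf{Step 2 (Flow-invariance and ergodicity of the outer measure).} Let $\mu^*(Z) := \lim_T \inf \sum (s+t)^{-1}\det(Df_{s+t}|_{\Eu_{f_{-s}(x)}})^{-1}$ (with $c = 1$ temporarily). Standard Carath\'eodory theory makes $\mu^*$ a Borel measure. Both the weights and the balls are flow-equivariant: writing $w_{s,t}(x,r)$ for the summand, one has $w_{s+\tau,t-\tau}(f_\tau x,r) = w_{s,t}(x,r)$ (since $s+t$ and $f_{-s}(x)$ are preserved under $(x,s,t)\mapsto(f_\tau x,s+\tau,t-\tau)$) and $f_\tau(B^*_{s,t}(x,r)) = B^*_{s+\tau,t-\tau}(f_\tau x,r)$ (the latter using flow-invariance of $\beta(\cdot,\cdot)$, which in turn follows from flow-invariance of the stable and unstable foliations). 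Covers of $Z$ and $f_\tau Z$ therefore correspond bijectively with identical weight sums, so $\mu^*$ is flow-invariant. Step~1 gives $\mu^* \asymp \mu$, so $\mu^*$ is finite and mutually absolutely continuous with $\mu$; by ergodicity of SRB, $\mu^* = C\mu$ for some constant $C > 0$. Setting $c := 1/C$ yields the stated formula.

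\textbf{Main obstacle.} The heart of the argument is the uniform scaling of the transverse quotient $\nu^{cs}$ in Step~1: one must show that this measure behaves on backward Bowen sets like an ``equilibrium measure'' for the unstable Jacobian potential, with decay governed by $\det(Df_s|_{\Eu_{f_{-s}(x)}})^{-1}$ rather than by the stable Lebesgue volume. This is the flow-time version of the classical Gibbs property for equilibrium states and requires a careful analysis of how the SRB disintegration interacts with the backward dynamics. A secondary technical point is the flow-invariance of the offset function $\beta(x,z)$, which must be verified so that the $|\beta|<r/(s+t)$ cutoff is preserved exactly under the flow correspondence of covers; this is what makes the specific flow-window appearing in the definition of $B^*_{s,t}(x,r)$ the right one for the identification to succeed.
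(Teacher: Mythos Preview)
Your outline matches the paper's: establish the scaling $\mu(B^*_{s,t}(x,r)) \asymp (s+t)^{-1} \det(Df_{s+t}|_{\Eu_{f_{-s}(x)}})^{-1}$, prove flow-invariance of the Carath\'eodory measure $\mu^*$, and conclude $\mu^* = C\mu$ via ergodicity/uniqueness. Your flow-invariance argument is exactly the paper's.

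The substantive difference is in Step~1. The paper does \emph{not} disintegrate $\mu$ along unstable leaves or analyze the transverse quotient $\nu^{cs}$. Instead it treats this theorem as the special case $\ph = \phu$ of a result for general H\"older potentials, and uses as a black box that the SRB measure is the equilibrium (hence Gibbs) measure for $\phu$, so that on one-sided Bowen balls $\mu(B_t(x,r)) \asymp e^{\Phu(x,t)} = \det(Df_t|_{\Eu_x})^{-1}$ is already available. A separate lemma, valid for \emph{any} flow-invariant measure $\nu$, gives $\nu(B^*_{s,t}(x,r)) \asymp (s+t)^{-1} \nu(B^\pm_{s-\eta,t-\eta}(x,r))$ by translating $B^*$ along the flow in increments of order $r/(s+t)$ to pack or cover $B^\pm$. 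Combining these two facts yields Step~1 with no further work and entirely sidesteps the ``main obstacle'' you flag: one never needs to know how $\nu^{cs}$ scales on backward Bowen sets, because the one-sided Gibbs property for $\mu$ already encodes that information. Your disintegration route would ultimately succeed, but it amounts to re-deriving the Gibbs property of SRB from its product structure, which is considerably more labor than invoking it.

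There is also a small gap in Step~2. The assertion ``Step~1 gives $\mu^* \asymp \mu$'' is immediate for the lower bound (any cover satisfies $\sum w \geq C^{-1} \sum \mu(B^*) \geq C^{-1} \mu(Z)$), but the upper bound $\mu^*(Z) \leq C\mu(Z)$ does not follow formally from comparing weights with $\mu$-masses of individual balls, since covers may overlap arbitrarily. The paper handles this by a packing argument on one-sided Bowen balls: inside $Z = B_\tau(x,r/2)\cap\Lambda$, choose $E_T\subset Z$ maximal so that the sets $B^*_{T,T}(y,r)$ for $y\in E_T$ cover $Z$ while the sets $B^*_{T,T}(y,r/2)$ are disjoint and contained in $B_\tau(x,r)$; then $\sum_{y\in E_T} w \leq C \sum_{y\in E_T} \mu(B^*_{T,T}(y,r/2)) \leq C \mu(B_\tau(x,r))$, which gives the upper Gibbs bound for $\mu^*$.
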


\subsection{Topological pressure and equilibrium measures}

Throughout the paper, $M$ is a smooth compact Riemannian manifold, and $F = (f_t)_{t\in\RR}$ is a smooth flow on $M$. 
We will consider a compact $F$-invariant set $\Lambda\subset M$, which in all our results is assumed to be a topologically transitive locally maximal hyperbolic set. 
Given a continuous function $\ph\colon \Lambda\to \RR$, which we call a \emph{potential}, the \emph{topological pressure} $P(\ph)$ of $(\Lambda,F,\ph)$ can be defined in three ways. 

\begin{definition}[Variational principle]
\begin{equation}\label{eqn:var-princ}
P(\ph) = \sup \Big( h_\mu(f_1) + \int\ph\,d\mu\Big),
\end{equation}
where $h_\mu(f_1)$ is measure-theoretic entropy and the supremum is over all Borel $F$-invariant probability measures $\mu$ on $\Lambda$. 
\end{definition}

\begin{definition}[Growth rate]
Writing $\Phi(x,t) = \int_0^t \ph(f_s x)\,ds$,
\begin{equation}\label{eqn:growth}
P(\ph) = \lim_{r\to 0} \ulim_{t\to\infty} \frac 1t \log \inf \sum_{x\in E} e^{\Phi(x,t)},
\end{equation}
where the infimum is over all sets $E\subset \Lambda$ such that $\Lambda$ is covered by the collection $\{B_t(x,r) : x\in E\}$ of \emph{Bowen balls} given by
\[
B_t(x,r) = \{y\in M : d(f_s x, f_s y) < r \text{ for all }s\in [0,t]\}.
\]
\end{definition}

\begin{definition}[Dimension via a critical value]\label{def:dim}
Fixing $r>0$, define for each $\alpha\in\RR$ an outer measure $m^\alpha$ on $\Lambda$ by
\begin{equation}\label{eqn:ma}
m^\alpha(Z) = \lim_{T\to\infty} \inf \sum_{(x,t) \in \EEE} e^{\Phi(x,t) - t \alpha},
\end{equation}
where the infimum is over all finite or countable sets $\EEE \subset \Lambda\times [T,\infty)$ such that the Bowen balls $\{B_t(x,r) : (x,t) \in \mathcal{E}\}$ cover $Z$.\footnote{A crucial difference between this and the previous definition is that here the order $t$ may vary from one Bowen ball to the next.}
Then there is a critical value of $\alpha$ given by
\begin{equation}\label{eqn:crit}
P(\ph,r) = \sup \{\alpha : m^\alpha(\Lambda) = \infty \} = \inf \{\alpha : m^\alpha(\Lambda) = 0\},
\end{equation}
and the topological pressure is $P(\ph) = \lim_{r\to 0} P(\ph,r)$.
\end{definition}

A measure achieving the supremum in the variational principle is called an \emph{equilibrium measure}; this includes measures of maximal entropy and SRB measures. Questions of existence, uniqueness, and properties of equilibrium measures lie at the heart of the study of thermodynamic formalism for dynamical systems. 

When $\Lambda$ is a topologically transitive locally maximal hyperbolic set and $\ph\colon \Lambda\to\RR$ is H\"older continuous, it is well-known that there is a unique equilibrium measure $\mu$, which is also the unique probability measure satisfying the following \emph{Gibbs property}: for all sufficiently small $r>0$ there is $Q=Q(r)$ such that\footnote{We write $A=Q^{\pm1}B$ to mean $Q^{-1}B \leq A \leq QB$.}
\begin{equation}\label{eqn:Gibbs}
\mu(B_t(x,r)) = Q^{\pm 1} e^{\Phi(x,t)-tP(\ph)}
\text{ for all } x\in\Lambda \text{ and } t>0.
\end{equation}
In this setting we have $P(\ph) = P(\ph,r)$ for all sufficiently small $r$, and choosing $\alpha = P(\ph) = P(\ph,r)$ we see that the weight in \eqref{eqn:ma} associated to each $(x,t)$ is exactly $e^{\Phi(x,t) - tP(\ph)}$ as in \eqref{eqn:Gibbs}.

In this paper we describe three related constructions of $\mu$ based on dimension theory, and in particular on the outer measure in \eqref{eqn:ma}.
\begin{enumerate}[itemsep=.5ex]
\item \emph{Push forward and average}. With $\alpha = P(\ph)$,
\eqref{eqn:ma} defines a finite Borel measure $\mmu_x$ on each local unstable manifold. The averaged pushforwards $\nu_t = \frac 1t \int_0^t (f_s)_* \mmu_x \,ds$ converge in the weak* topology to a scalar multiple of the unique equilibrium measure $\mu$. See Theorem \ref{thm:properties}.
\item \emph{Construct a product}. Reversing time gives an analogous construction of Borel measures $\ms_x$ on stable leaves.  Using the product structure (unstable) $\times$ (stable) $\times$ (flow direction) one can construct a product measure on neighborhoods in $\Lambda$ by $\mmu_x\times \ms_x\times\Leb$, which is equivalent to $\mu$.
See Theorem \ref{thm:product}.
\item \emph{A two-sided definition}. Theorem \ref{thm:srb-main} is a specific case of a more general result that adapts Definition \ref{def:dim}, using two-sided Bowen balls to refine in both the stable and unstable directions, and refining ``manually'' in the flow direction, thus directly obtaining a Borel measure on $\Lambda$, which is a scalar multiple of $\mu$. See Theorem \ref{thm:direct}.
\end{enumerate}

In the first two constructions involving the leaf measures, an important role is played by the scaling properties of these measures under the flow and under holonomy maps; see Theorem \ref{thm:conf-cts}. In fact, these scaling properties characterize the leaf measures up to a scalar; see Corollaries \ref{cor:conditionals} and \ref{cor:unique}, where this is deduced from uniqueness of $\mu$ together with the fact that the density function in Theorem \ref{thm:product} can be written down explicitly, which allows us to recover the leaf measures from the conditional measures of $\mu$. In Corollary \ref{cor:srb-product}, we show how these results give a direct construction of the conditional measures of the SRB measure along \emph{stable} leaves.

\subsection{History and related results}\label{sec:lit}

The author is not aware of any analogues in the literature of the third construction mentioned above, via two-sided Bowen balls as in Theorem \ref{thm:srb-main} and \S\ref{sec:two-sided} below. For the first two constructions, the history is richer.

\subsubsection{Push forward and average}

Applying the ``push forward and average'' procedure to an initial reference measure is a standard way of proving the Krylov--Bogolubov theorem on existence of an invariant measure. For uniformly hyperbolic systems, Sinai \cite{jS68} and Ruelle \cite{dR76} proved that taking volume as the initial reference measure leads to a limiting measure whose conditionals on unstable leaves are equivalent to leaf volume, which in turn shows that this \emph{Sinai--Ruelle--Bowen (SRB) measure} governs the asymptotic behavior of volume-typical trajectories. Pesin and Sinai \cite{PS82} used leaf volume as the original reference measure, and generalized the result on unstable conditionals to the partially hyperbolic setting. See \cite{CLP17} for an overview of this approach and of SRB measures more generally.

This approach was extended beyond the SRB case to more general equilibrium measures in recent joint work of the author with Ya.\ Pesin and A.\ Zelerowicz \cite{CPZ,CPZ2}. This showed that for uniformly (and some partially) hyperbolic diffeomorphisms, \eqref{eqn:ma} defines a finite Borel measure on each unstable leaf whose averaged pushforwards converge to a multiple of the unique equilibrium measure. Theorem \ref{thm:properties} uses this to deduce the result for flows.

\subsubsection{Construct a product}

The ``construct a product'' procedure has been extensively explored for the measure of maximal entropy (MME), where it was carried out by Sinai for Anosov diffeomorphisms \cite{jS68} and Margulis for Anosov flows \cite{gM70}; this was extended to the Axiom A setting by Ruelle and Sullivan for diffeomorphisms \cite{RS75}, and Bowen and Marcus for flows \cite{BM77}. 
Margulis uses a functional analytic approach, while the other papers rely on Markov partitions.

Let us highlight two important precursors to the present work, which constructed the leaf measures for the MME as the Hausdorff measures associated to a dynamically defined metric. This was done for geodesic flows in negative curvature by 
Hamenst\"adt \cite{uH89}, and for more general Anosov flows by Hasselblatt \cite{bH89}. It is reasonable to view Theorem \ref{thm:product} as an extension of these results to equilibrium measures associated to nonconstant potential functions; see \S\ref{sec:alternate}.

For nonconstant potential functions, the product structure of the corresponding equilibrium measure was described by Haydn \cite{nH94} for flows, and by Leplaideur \cite{Lep} for diffeomorphisms. Both of these papers established the scaling properties of the leaf measures under the dynamics and under holonomy; Haydn referred to these properties as \emph{Margulis' cocycle equations}. Haydn's proof uses Markov partitions and the correspondence between equilibrium measures for a map and its suspension flow \cite{BR75}. Leplaideur's proof does not use a full-fledged Markov partition or symbolic dynamics, but does require a proper rectangle with the Markov property, on which the first return map is considered. Both proofs use the Perron--Frobenius operator and obtain the leaf measures via its eigendata.

In light of the previous paragraph, it is worth pointing out that the construction of an equilibrium state over a subshift of finite type using Ruelle's Perron--Frobenius operator \cite{Bow08} can be viewed as a product construction, even when the underlying system is non-invertible; one can interpret the eigenmeasure as a measure on local unstable leaves, and the eigenfunction as giving the total weight of local stable leaves in the natural extension.

\subsubsection{Reliance on the literature}\label{sec:reliance}

The proofs we give in \S\ref{sec:pf} involve a certain amount of ``cheating'' by relying on known results from the literature, up to and including the fact that there is a unique equilibrium measure and that this is also the unique invariant Gibbs probability measure.  With a little more effort (and a longer paper), the arguments here could be made self-contained. This would require us to provide proofs of the following facts.
\begin{enumerate}
\item Uniform counting bounds for leafwise partition sums: these play a key role in the proof that the measures we construct are positive and finite. See \cite[\S6]{CPZ2} for details of this argument in the discrete-time case, using ideas that go back to Bowen's work using the specification property \cite{rB745}.
\item Ergodicity of the invariant measure built using any of the three constructions. In \cite{CPZ,CPZ2} this is done for the discrete-time case using the Hopf argument.
\item An ergodic Gibbs measure must be the unique equilibrium measure; this argument is standard and can be found in \cite{rB745}.
\end{enumerate}
We have chosen to shorten the proofs in this paper by relying on known results, but for potential applications to non-uniform hyperbolicity (see the next section) it would likely be productive to seek direct proofs of these facts along the lines in \cite{CPZ,CPZ2}.

\subsection{Non-uniform hyperbolicity}

For uniformly hyperbolic systems, thermodynamic formalism is very well understood, and in this setting the results in this paper are mostly intended to illuminate a well-known subject from a slightly different angle. It may be hoped, however, that the approach presented here will eventually bear fruit for non-uniformly hyperbolic systems, where the story is much less complete \cite{CP17}.

One well-established approach to thermodynamic formalism for non-uniformly hyperbolic systems uses a Markov structure in one guise or another, via countable-state Markov partitions, Young towers, or inducing schemes; see \cite{CP17} for an overview of the relevant literature. More recently, the author and D.J.\ Thompson have introduced an alternate approach based on Bowen's proof of uniqueness for uniformly hyperbolic systems using expansivity and specification \cite{rB745}. There are non-uniform versions of these properties \cite{CT12,CT16} that have found applications in examples such as geodesic flow in nonpositive curvature \cite{BCFT} and no conjugate points \cite{CKW}; see the forthcoming survey \cite{CT21} for a more complete description of this approach.

If one wants to go beyond uniqueness of the equilibrium measure and establish strong statistical properties, local product structure, and so on, then the well-established symbolic approach gives stronger results than the more recent specification-based approach.\footnote{However, it is worth mentioning recent work of Call and Thompson \cite{CaT19,Cal20} that uses the specification approach to establish the K property, and even Bernoullicity in some cases.} Given that the approach in the present paper shares much in common with the specification-based approach, such as a reliance on uniform counting bounds and bare-hands constructions, it is worth asking whether the present approach can also be generalized to non-uniform hyperbolicity, and then used to establish some of these stronger results.

One example of this symbiosis is found in recent work by the author, Gerhard Knieper, and Khadim War \cite{CKW} that used the specification approach to establish uniqueness of the MME for geodesic flows on surfaces with no conjugate points, and then used the Patterson--Sullivan approach to obtain a product structure for this measure; this in turn can be used to establish the Margulis asymptotics for the number of closed geodesics on such surfaces \cite{CKW2}. The Patterson--Sullivan approach can be viewed as a leafwise construction analogous to the present one \cite{vK90}, and thus one may reasonably hope to carry out a similar procedure in other non-geometric examples, including more general non-uniformly hyperbolic systems, hyperbolic systems with singularities such as billiards, or singular hyperbolic flows such as the Lorenz attractor. Of course one would need to first carry out the specification approach in these settings, or at least adapt enough of it to prove the uniform counting bounds necessary for the construction here, and so for the time being this is all rather aspirational.

\subsection*{Outline of the paper}

In \S\ref{sec:def}, we recall some background definitions from the literature on hyperbolic sets for flows and on Carath\'eodory dimension characteristics. In \S\ref{sec:results} we state the main results that were briefly described above. We give the proofs in \S\ref{sec:pf}.

\section{Background definitions}\label{sec:def}

The reader who is familiar with hyperbolic dynamics can likely skip this section on a first reading and proceed directly to the main results in \S\ref{sec:results}.
This section contains basic definitions and standard facts about hyperbolic sets for flows (\S\ref{sec:hyp-sets}), including some lemmas that must be stated before our main results in order to establish the scale at which various constructions are made; it also describes the general notion of Carath\'eodory dimension characteristic (\S\ref{sec:car-dim}) that we use when we define various measures in \S\ref{sec:results}. We refer to the recent book of Fisher and Hasselblatt \cite[Chapters 5 and 6]{FH19} for proofs of facts about hyperbolic flows that are omitted here, and to Pesin's book \cite{pes97} for a more in-depth treatment of dimension theory in dynamical systems.


\subsection{Hyperbolic sets}\label{sec:hyp-sets}

Throughout, recall that $M$ is a smooth compact Riemannian manifold, and $F = (f_t)_{t\in\RR}$ is a smooth flow on $M$. 

\begin{definition}[Hyperbolic sets]\label{def:hyp-set}
A \emph{hyperbolic set} for $F$ is a compact $F$-invariant set $\Lambda \subset M$ over which the tangent bundle admits a $DF$-invariant splitting $T_\Lambda M = \Es \oplus \Eu \oplus E^0$ such that $E^0$ is the flow direction,\footnote{We do not allow $\Lambda$ to contain fixed points for the flow, so $E^0$ is always a one-dimensional subspace.} $\Es$ is uniformly contracting, and $\Eu$ is uniformly expanding: there are $C_0\geq 1$ and $\chi \in (0,1)$ such that
\begin{equation}\label{eqn:hyp}
\|Df_t|_{\Es}\| \leq C_0 \chi^t
\quad\text{and}\quad
\|Df_{-t}|_{\Eu}\| \leq C_0 \chi^t
\quad\text{for all }t\geq 0.
\end{equation}
\end{definition}

By passing to an \emph{adapted metric} and increasing $\chi$ slightly,\footnote{If we do not pass to an adapted metric, then our constructions give equivalent measures, but we do not have a formula for the Radon--Nikodym derivatives and thus cannot write down the product construction explicitly, or recover the leaf measures from the conditional measures; see \S\ref{sec:alternate}. In \S\ref{sec:two-sided} we will drop the assumption that the metric is adapted.}
we assume without loss of generality that $C_0=1$. In the adapted metric, we can also assume that $\Es,\Eu,E^0$ are all orthogonal, and that the flow has unit speed so that $\|Df_t|_{E^0}\|=1$ for all $t\in\RR$.

We write $d$ for the distance function on $M$ induced by the (adapted) Riemannian metric, and $B(x,r)$ for the ball centered at $x\in M$ with radius $r>0$. More generally, given an injectively immersed connected submanifold $W\subset M$, we will write $d_W$ for the distance function on $W$ induced by the restriction of the Riemannian metric, and 
\begin{equation}\label{eqn:BW}
B(x,r,W) = \{y\in W : d_W(y,x) < r\}
\end{equation}
for the corresponding ball in $W$ centered at $x\in W$ with radius $r>0$.  

\begin{definition}[Cone fields on $\Lambda$]\label{def:cone}
Given $\kappa>0$, the \emph{stable and unstable cone fields of width $\kappa$ on $\Lambda$} are
\begin{equation}\label{eqn:cones}
\begin{aligned}
C_x^u = C_x^u(\kappa) &:= \{ v + w : v\in \Eu_x,\ w \in \Es_x \oplus E^0_x,\ \|w\| < \kappa \|v\| \}, \\
C_x^s = C_x^s(\kappa) &:= \{ v + w : v\in \Es_x,\ w \in \Eu_x \oplus E^0_x,\ \|w\| < \kappa \|v\| \}.
\end{aligned}
\end{equation}
\end{definition}

For every $t> 0$ and $x\in \Lambda$ we have
\begin{equation}\label{eqn:cone-inv}
\overline{Df_t(x)(C_x^u)} \subset C_{f_t x}^u
\quad\text{and}\quad
\overline{Df_{-t}(x)(C_x^s)} \subset C_{f_{-t} x}^s.
\end{equation}
Moreover, for every $\theta,\kappa>0$ there is $t_0\in\RR$ such that given any $x\in \Lambda$ and $v\in T_xM$ with $\angle(v,\Es_x \oplus E^0_x)\geq\theta$, we have
\begin{equation}\label{eqn:push-cone}
Df_t(x)(v) \in C_{f_t x}^u(\kappa)
\quad\text{for all }t\geq t_0.
\end{equation}
Fixing $\bar\chi \in (\chi,1)$, by \eqref{eqn:hyp} and \eqref{eqn:cone-inv} we can choose $\kappa>0$ sufficiently small that for all $x\in \Lambda$ and $t\geq 0$, we have
\begin{equation}\label{eqn:cone-hyp}
\begin{aligned}
\|Df_t(x)(v)\| \geq \bar\chi^{-t}\|v\|
&\text{ for all } v\in C_x^u(\kappa), \\
\|Df_{-t}(x)(v)\| \geq \bar\chi^{-t}\|v\|
&\text{ for all } v\in C_x^s(\kappa).
\end{aligned}
\end{equation}
From now on, $C_x^u$ and $C_x^s$ will be defined with this value of $\kappa$ unless otherwise specified.

\begin{definition}[Extension to a neighborhood]\label{def:nbhd}
Extend the distributions $\Es$ and $\Eu$ continuously to a neighborhood of $\Lambda$ (note that they need not be invariant off of $\Lambda$). Define cone families $C_x^s,C_x^u$ on this neighborhood by \eqref{eqn:cones}, with $\kappa$ as in \eqref{eqn:cone-hyp}. Fixing $\lambda \in (\bar\chi,1)$, there is $r_0>0$ such that if $x\in M$ and $T>0$ have the property that $f_t(x) \in B(\Lambda,r_0) := \bigcup_{y\in \Lambda} B(y,r_0)$ for every $t\in [0,T]$, then for all $t\in (0,T]$ we have
\begin{equation}\label{eqn:nearby-cone-hyp}
\overline{Df_t(x)(C_x^u)} \subset C_{f_t x}^u
\text{ and }
\|Df_t(x)(v) \| \geq \lambda^{-t} \|v\| \text{ for all } v\in C_x^u,
\end{equation}
and similarly for the stable cones with time reversed.
\end{definition}

\begin{definition}[Admissible manifolds]\label{def:admissibles}
An embedded connected submanifold $W \subset B(\Lambda,r_0)$ is \emph{$u$-admissible} 
if $T_xW \subset C_x^u$ for all $x\in W$,\footnote{This definition does not require $W$ to have the same dimension as $\Eu$, but in practice we will only consider $u$-admissible manifolds of maximal dimension.}
and \emph{$s$-admissible} if $T_xW\subset C_x^s$ for all $x\in W$.
\end{definition}

It follows from \eqref{eqn:nearby-cone-hyp} that
\begin{multline}\label{eqn:u-admissible}
\text{if $W$ is $u$-admissible and $f_\tau(W) \subset B(\Lambda,r_0)$ for all $\tau \in [0,t]$,}\\
\text{then $d_W(x,y) \leq \lambda^t d_{f_t W}(f_t x, f_t y)$ for all $x,y\in W$.}
\end{multline}
A similar result holds for $s$-admissible manifolds if we replace $f_\tau$ and $f_t$ by $f_{-\tau}$ and $f_{-t}$. 

\begin{definition}[Global manifolds $\Ws(x),\Wu(x)$]
Given a point $x\in \Lambda$, the \emph{global stable and unstable manifolds} of $x$ are
\begin{align*}
\Ws(x) &= \{y\in M : d(f_t y, f_t x) \to 0 \text{ as } t\to\infty\}, \\
\Wu(x) &= \{y\in M : d(f_t y, f_t x) \to 0 \text{ as } t\to-\infty\}.
\end{align*}
These are injectively immersed manifolds such that $T_x W^*(x) = E^*_x$ for every $x\in \Lambda$ and $*\in\{\mathrm{s,u}\}$. They are clearly flow-invariant in the sense that $f_t(W^*(x)) = W^*(f_t x)$ for every $x\in \Lambda$, $*\in\{\mathrm{s,u}\}$, and $t\in \RR$.
\end{definition}

\begin{definition}[Local manifolds $\Ws(x,\delta),\Wu(x,\delta)$]\label{def:local-leaves}
Given $x\in \Lambda$ and $\delta>0$, let $\Ws(x,\delta) := B(x,\delta,\Ws(x))$, and similarly for $\Wu$. There exists $\delta_0>0$ such that for all $x\in \Lambda$ and $\delta \in (0,\delta_0]$, the set $W = \Ws(x,\delta)$ has the following properties:
\begin{itemize}
\item it is $s$-admissible;\footnote{Observe that in the Anosov case $\Lambda = M$, every submanifold of $\Ws(x)$ is $s$-admissible, but that when $\Lambda \neq M$ there may be submanifolds of $\Ws(x)$ thare are not $s$-admissible because they contain points that are too far from $\Lambda$.}
\item there is a topological ball $B\subset \Es_x$ containing the origin and a function $\psi\colon B\to \Eu_x$ such that $W = \exp_x(\graph\psi)$. 
\end{itemize}
We refer to any $W \subset \Ws(x)$ with these properties as a \emph{local stable manifold}, or a \emph{local stable leaf}. Local unstable manifolds are defined analogously, reversing the roles of $s,u$.
These local leaves are $C^1$ and depend continuously on $x$ in the $C^1$ topology.
\end{definition}

If $W$ is a local stable manifold of $x$, then $f_t(W)$ is a local stable manifold of $f_t(x)$ for all $t\geq 0$, and similarly for unstable manifolds with $t\leq 0$.
Thus \eqref{eqn:u-admissible} and its analogue for stable manifolds gives
\begin{equation}\label{eqn:leaves-contract}
\begin{aligned}
d(f_t y, f_t z) &\leq \lambda^t d(y,z) \text{ for all } y,z\in \Ws(x,\delta_0) \text{ and } t\geq 0, \\
d(f_{-t} y, f_{-t} z) &\leq \lambda^t d(y,z) \text{ for all } y,z\in \Wu(x,\delta_0) \text{ and } t\geq 0,
\end{aligned}
\end{equation}
where here and throughout the remainder of the paper $\delta_0 >0$ is fixed as in Definition \ref{def:local-leaves}.

\begin{definition}[Weak manifolds $W^{cs,cu}$]
Given $x\in \Lambda$ and $\delta\in (0,\delta_0]$, we consider the following \emph{local weak stable and unstable manifolds}:
\begin{align*}
\Wcs(x,\delta) &= \{ f_t(y) : y\in \Ws(x,\delta), |t| < \delta \}, \\
\Wcu(x,\delta) &= \{ f_t(y) : y\in \Wu(x,\delta), |t| < \delta \}.
\end{align*}
We also consider the \emph{global weak stable and unstable manifolds}
\begin{align*}
\Wcs(x) &= \{ f_t y : y\in \Ws(x), t\in \RR \} = \bigcup_{t\in \RR} \Ws(f_t x), \\
\Wcu(x) &= \{ f_t y : y\in \Wu(x), t\in \RR \} = \bigcup_{t\in \RR} \Wu(f_t x).
\end{align*}
\end{definition}

\begin{definition}[Weak-stable transversals]\label{def:cs-trans}
A $C^1$ injectively immersed connected submanifold $W\subset M$ is \emph{transverse to the weak-stable direction} if $T_x M = T_x W \oplus T_x \Wcs(x) = T_xW \oplus \Es_x \oplus E^0_x$ for all $x\in W \cap \Lambda$. Write $\WWW^u$ for the set of all such $W$.

Given $\theta>0$, let $\WWW^u_\theta$ denote the set of $W\in \WWW^u$ such that $\angle(w,v) \geq \theta$ for any $x\in W\cap \Lambda$, $w\in T_x W$, and $v\in T_x\Wcs(x) = \Es_x \oplus E^0_x$.
Reversing the roles of $u$ and $s$, we define $\WWW^s$ and $\WWW^s_\theta$ analogously.
\end{definition}

Using \eqref{eqn:push-cone}, there exists $r_1\in (0,r_0]$ such that the following is true.

\begin{lemma}\label{lem:push-trans}
Given any $W\in \WWW^u$ and $x\in W\cap \Lambda$, we have
\begin{equation}\label{eqn:push-to-Wu}
\lim_{t\to\infty} d_{C^1}(B(f_t x, r_1, f_t W), \Wu(f_t x, r_1)) = 0.
\end{equation}
Given $\theta>0$, this convergence is uniform over all $W\in \WWW^u_\theta$ and $x\in W\cap \Lambda$.
In particular, there is $t_1=t_1(\theta)\geq 0$ such that for any $W \in \WWW^u_\theta$, $x\in W \cap \Lambda$, and $t\geq t_1$, the manifold $B(f_t x, r_1, f_t W)$ is $u$-admissible.
\end{lemma}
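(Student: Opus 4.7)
This is an Inclination-Lemma ($\lambda$-lemma) type statement for the hyperbolic set $\Lambda$, and my plan is to follow the standard graph-transform template, combining the pushing-into-cones estimate \eqref{eqn:push-cone} with the cone invariance and expansion in \eqref{eqn:nearby-cone-hyp}.

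Fix $\theta>0$ and an auxiliary $\kappa'\ll\kappa$. For $W\in\WWW^u_\theta$ and $x\in W\cap\Lambda$, every unit $v\in T_xW$ has $\angle(v,\Es_x\oplus E^0_x)\geq\theta$, so \eqref{eqn:push-cone} furnishes $t_0=t_0(\theta,\kappa')$, uniform in $W$ and $x$, with $Df_{t_0}(x)(T_xW)\subset C^u_{f_{t_0}x}(\kappa')$. Using continuous dependence of the extended cone field on its base point together with the $C^1$-regularity of $W$, I would extend this pointwise containment to a neighborhood $U\subset f_{t_0}W$ of $f_{t_0}x$ on which every tangent plane lies in $C^u_\cdot(2\kappa')$, making $U$ a $u$-admissible piece.

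The next step is to iterate forward. For each $s\geq 0$, by \eqref{eqn:nearby-cone-hyp} the image $f_s(U)$ is $u$-admissible with tangent planes in the narrower cone $C^u_\cdot(2\kappa'\lambda^{2s})$ (the graph-transform contraction implicit in \eqref{eqn:nearby-cone-hyp}) and with intrinsic radius at least $\lambda^{-s}\operatorname{rad}(U)$ (the expansion in $C^u$). Once $\lambda^{-s}\operatorname{rad}(U)\geq 2r_1$, the ball $B(f_{t_0+s}x,r_1,f_{t_0+s}W)$ sits inside $f_s(U)$, is itself $u$-admissible, and, viewed as a graph over $\Eu_{f_{t_0+s}x}$ via the exponential map, has $C^1$-norm of order $\kappa'\lambda^{2s}$; this yields the desired $C^1$-convergence to $\Wul(f_{t_0+s}x,r_1)$ as $s\to\infty$.

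The main obstacle is uniformity in $W\in\WWW^u_\theta$: pointwise the argument above works for each $W$ and $x$, but the radius $\operatorname{rad}(U)$ produced in the first step a priori depends on the transverse $C^1$-modulus of $W$ near $x$, which is not uniformly bounded within $\WWW^u_\theta$. To get a uniform threshold $t_1=t_1(\theta)$, one exploits that the piece of $W$ actually relevant at time $t_0+s$ has intrinsic radius $O(r_1\lambda^{t_0+s})$ in $W$, shrinking exponentially, so that only the germ of $W$ at $x$ — carried by $T_xW$ — effectively contributes in the limit. The space of such germs $\{(x,T_xW):W\in\WWW^u_\theta,\ x\in W\cap\Lambda\}$ is a closed, hence compact, subset of the Grassmannian bundle over $\Lambda$, and a compactness-continuity argument based on extending \eqref{eqn:push-cone} to a neighborhood of $\Lambda$ (via continuity of the extended distributions $\Es,\Eu,E^0$) yields the uniform constants. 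Making precise how the individual $C^1$-modulus of $W$ enters and is then absorbed by the exponential rate $\lambda$ is the technical crux and where most of the care is needed.
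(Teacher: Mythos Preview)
The paper does not actually prove this lemma: it is stated in \S\ref{sec:hyp-sets} among the ``basic definitions and standard facts'' about hyperbolic flows, with only the one-line hint ``Using \eqref{eqn:push-cone}, there exists $r_1\in(0,r_0]$ such that the following is true,'' and with a blanket reference to \cite[Chapters 5--6]{FH19} for omitted proofs. Your outline is precisely the standard inclination-lemma/graph-transform argument that this hint points to, so your approach matches what the paper intends; there is nothing further to compare.

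One small point worth tightening in your write-up: when you pass from the pointwise inclusion $Df_{t_0}(x)(T_xW)\subset C^u_{f_{t_0}x}(\kappa')$ to an open $u$-admissible patch $U\subset f_{t_0}W$, you implicitly need that the relevant piece of $f_{t_0}W$ lies in $B(\Lambda,r_0)$, where the extended cone field from Definition~\ref{def:nbhd} is defined. You already have the right mechanism for this in your uniformity paragraph---the piece of $W$ that matters at time $t$ has intrinsic radius $O(r_1\lambda^{t})$---so it is just a matter of invoking it earlier to justify that the cone field is available on the patch you are working with.
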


Our main results will be given for a hyperbolic set satisfying the following two additional properties.
\begin{itemize}
\item \emph{Local maximality}: $\Lambda = \bigcap_{t\in\RR} f_t(U)$ for some open $U\supset \Lambda$.
\item \emph{Topological transitivity}: $\Lambda$ contains a dense orbit.
\end{itemize}

\begin{definition}[Basic sets]\label{def:basic-set}
A topologically transitive locally maximal hyperbolic set $\Lambda$ will be referred to as a \emph{basic set}.
\end{definition}

\begin{definition}[Local product structure]\label{def:lps}
There are $L,\delta_1>0$ such that if $x,y\in \Lambda$ have $d(x,y) < \delta_1$, then the intersection $\Wcs(x,L\delta_1) \cap \Wu(y,L\delta_1)$ consists of a single point, which we denote $[x,y]$ and refer to as the \emph{bracket} of $x$ and $y$.\footnote{This is sometimes called the \emph{Bowen bracket} or the \emph{Smale bracket}.} Moreover, $[x,y]$ lies in both of $\Wcs(x,Ld(x,y))$ and $\Wu(y,Ld(x,y))$.
A hyperbolic set $\Lambda$ has \emph{local product structure} if $[x,y]\in \Lambda$ for every $x,y\in \Lambda$ with $d(x,y)<\delta_1$.
\end{definition}

Given a hyperbolic set $\Lambda$, local product structure is equivalent to local maximality. In particular, basic sets have local product structure.

\begin{definition}[Attractors]\label{def:attractors}
A basic set $\Lambda$ is an \emph{attractor} if there is an open set $U\supset \Lambda$ such that $\overline{f_t(U)} \subset U$ for all $t>0$.
\end{definition}

\subsection{Carath\'eodory dimension and measure}\label{sec:car-dim}

We recall a construction from \cite[\S10]{pes97} generalizing Hausdorff dimension and measure.

\begin{definition}
A \emph{Carath\'eodory dimension structure}, or \emph{C-structure}, on a set $X$ is given by the following data.
\begin{enumerate}
\item An indexed collection $\FFF = \{U_s \subset X : s\in \SSS\}$ of subsets of $X$.
\item Functions $\xi,\eta,\psi \colon \SSS\to [0,\infty)$ such that
\begin{enumerate}[label=(H\arabic{*})]
\item 
$\eta(s) = 0$ if and only if $U_s = \emptyset$, and similarly for $\psi$;\footnote{In \cite{pes97} there is an extra requirement that the case $U_s=\emptyset$ occur, but this can be safely omitted by defining $m^\alpha_C(\emptyset)=0$ in \eqref{eqn:mCa}.}
\item for every $\delta>0$, there exists $\eps>0$ such that $\eta(s) \leq \delta$ for any $s\in \SSS$ with $\psi(s) \leq \eps$;
\item for every $\eps>0$, there is a finite or countable $\GGG \subset \SSS$ such that $\bigcup_{s\in\GGG} U_s \supset X$ and $\psi(\GGG) := \sup \{ \psi(s) : s\in \SSS \} \leq \eps$.
\end{enumerate}
\end{enumerate}
No conditions are placed on $\xi$. 
\end{definition}

Given a C-structure $(\SSS,\FFF,\xi,\eta,\psi)$, a set $Z \subset X$, and  $\eps>0$, let
\[
\EE(Z,\eps) = \Big\{ \GGG \subset \SSS : Z \subset \bigcup_{s\in\GGG} U_s 
\text{ and } \psi(\GGG) \leq \eps \Big\}.
\]
By \cite[Proposition 1.1]{pes97}, the following defines an outer measure on $X$ for each $\alpha\in\RR$:
\begin{equation}\label{eqn:mCa}
m_C^\alpha(Z) := \lim_{\eps\to 0} \inf_{\GGG \in \EE(Z,\eps)} \sum_{s\in \GGG} \xi(s) \eta(s)^\alpha,
\end{equation}
where $m_C^\alpha(\emptyset):=0$. Then by \cite[Proposition 1.2]{pes97} there is a unique $\alpha_C \in [-\infty,\infty]$ such that $m_C^\alpha(X) = \infty$ for all $\alpha < \alpha_C$, and $m_C^\alpha(X) = 0$ for all $\alpha > \alpha_C$.  Writing $\dim_C(X) = \alpha_C$ for this \emph{Carath\'eodory dimension} of $X$, one is naturally led to the question of whether $m_C^{\alpha_C}(X)$ is positive and finite.  This requires further information about the C-structure, which we will explore more in the next section.

When $X$ is a metric space, one may ask whether $m_C := m_C^{\alpha_C}$ gives a Borel measure on $X$. Recall that an outer measure $m$ on a metric space $X$ is a \emph{metric outer measure} if $m(A\cup B) = m(A) + m(B)$ whenever $d(A,B) := \inf\{d(x,y) : x\in A, y\in B\} > 0$. Every metric outer measure has the property that all Borel sets are measurable; see \cite[Proposition 12.41]{Royden} or \cite[\S2.3.2(9)]{hF69}.

\begin{lemma}\label{lem:get-metric}
Suppose $\{\FFF,\xi,\eta,\psi\}$ is a C-structure with the property that for every $\rho>0$, there is $\eps>0$ such that $\psi(s) \leq \eps$ implies $\diam(U_s) < \rho$. Then $m_C = m_C^{\alpha_C}$ is a metric outer measure, and hence gives a Borel measure on $X$.
\end{lemma}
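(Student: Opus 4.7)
The plan is to verify directly that $m_C$ satisfies the defining property of a metric outer measure, namely $m_C(A\cup B)=m_C(A)+m_C(B)$ whenever $d(A,B)>0$, and then invoke the classical fact (cited as \cite[Proposition 12.41]{Royden}) that every metric outer measure renders all Borel sets measurable.

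Since $m_C$ is already an outer measure by \cite[Proposition 1.1]{pes97}, subadditivity $m_C(A\cup B)\leq m_C(A)+m_C(B)$ is automatic. The work is in proving the reverse inequality under the hypothesis $\rho:=d(A,B)>0$. First I would use the assumed relationship between $\psi$ and diameter to pick $\eps_0>0$ such that $\psi(s)\leq\eps_0$ forces $\diam(U_s)<\rho$. Then for any $\eps\in(0,\eps_0]$ and any $\GGG\in\EEE(A\cup B,\eps)$, I split
\[
\GGG_A:=\{s\in\GGG:U_s\cap A\neq\emptyset\},\qquad \GGG_B:=\{s\in\GGG:U_s\cap B\neq\emptyset\}.
\]
The key observation is that $\GGG_A\cap\GGG_B=\emptyset$: any $s$ in the intersection would have $U_s$ meeting both $A$ and $B$, forcing $\diam(U_s)\geq d(A,B)=\rho$, which contradicts the choice of $\eps_0$. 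Because $\GGG$ covers $A\cup B$, the subfamilies $\GGG_A$ and $\GGG_B$ cover $A$ and $B$ respectively, so $\GGG_A\in\EEE(A,\eps)$ and $\GGG_B\in\EEE(B,\eps)$.

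Since the summands $\xi(s)\eta(s)^\alpha$ are nonnegative, I can then estimate
\[
\sum_{s\in\GGG}\xi(s)\eta(s)^\alpha\;\geq\;\sum_{s\in\GGG_A}\xi(s)\eta(s)^\alpha+\sum_{s\in\GGG_B}\xi(s)\eta(s)^\alpha\;\geq\;\inf_{\GGG'\in\EEE(A,\eps)}\sum_{s\in\GGG'}\xi(s)\eta(s)^\alpha+\inf_{\GGG'\in\EEE(B,\eps)}\sum_{s\in\GGG'}\xi(s)\eta(s)^\alpha.
\]
Taking the infimum over $\GGG\in\EEE(A\cup B,\eps)$ and then letting $\eps\to 0$ yields $m_C^\alpha(A\cup B)\geq m_C^\alpha(A)+m_C^\alpha(B)$ for every $\alpha\in\RR$, and in particular for $\alpha=\alpha_C$. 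Combined with subadditivity this gives the desired additivity, establishing that $m_C$ is a metric outer measure and completing the proof.

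I do not expect any real obstacle here; this is a textbook Hausdorff-style disjointification argument, and the only place the specific hypothesis enters is in ensuring that sets $U_s$ appearing at small enough scale cannot simultaneously intersect $A$ and $B$. One minor point worth verifying carefully is that the splitting preserves the covering property and the constraint $\psi(\GGG_*)\leq\eps$, both of which follow trivially from $\GGG_A,\GGG_B\subset\GGG$.
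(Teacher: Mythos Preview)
Your proposal is correct and follows essentially the same approach as the paper: use the hypothesis to ensure that at small enough scale no $U_s$ can meet both $A$ and $B$, split any cover of $A\cup B$ into disjoint subcovers of $A$ and $B$, and conclude additivity. The paper's proof is terser but the underlying argument is identical.
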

\begin{proof}
If $d(A,B)>0$, there is $\eps>0$ such that if $U_s$ intersects $A$, $U_t$ intersects $B$, and $\psi(s),\psi(t) \leq \eps$, then $U_s \cap U_t = \emptyset$, which means that every $\GGG \in \EE(A\cup B,\eps)$ splits into two disjoint subsets, one covering $A$ and the other covering $B$. This implies that $m_C(A\cup B) = m_C(A) + m_C(B)$.
\end{proof}

\section{Main results}\label{sec:results}

All the results in this section will be proved in \S\ref{sec:pf}.

\subsection{Reference measures on unstable leaves and transversals}\label{sec:measures}

From now on we assume that $\Lambda$ is a basic set for a flow $F=(f_t)_{t\in\RR}$ and $\ph \colon \Lambda\to \RR$ is a H\"older continuous potential function: that is, there is $\sigma>0$ such that
\begin{equation}\label{eqn:Holder}
|\ph|_\sigma := \sup
\Big\{ \frac{|\ph(x) - \ph(y)|}{d(x,y)^\sigma} : x,y\in\Lambda, x\neq y \Big\} < \infty.
\end{equation}
Given $x\in \Lambda$ and $t\in\RR$, we write\footnote{Observe that for $t<0$ this is the negative of the Birkhoff integral along the orbit segment from $f_t x$ to $x$: we have $\Phi(x,t) = -\Phi(f_t x, -t)$.}
\begin{equation}\label{eqn:Phi}
\Phi(x,t) := \int_0^t \ph(f_s x)\,ds.
\end{equation}
Our first task is to construct a \emph{system of $\Wcs$-transversal measures}: by this we mean a family of Borel measures $\{ \mmu_W : W \in \WWW^u\}$ with the following properties.
\begin{itemize}
\item For every $W\in \WWW^u$, the measure $\mmu_W$ is supported on $W\cap \Lambda$ in the sense that $\mmu_W(M \setminus (W\cap \Lambda)) = 0$.
\item If $W_1,W_2 \subset \WWW^u$ and $W_1 \subset W_2$, then $\mmu_{W_1}(Z) = \mmu_{W_2}(Z)$ for all Borel $Z\subset W_1$.
\end{itemize}
We will see in \S\ref{sec:scaling-properties} that the system of measures we construct also has important \emph{continuity} and \emph{conformality} properties, and in \S\ref{sec:product} that these properties actually characterize the system of measures up to a scalar multiple. 

Let $\delta_0>0$ be the size of the local manifolds as in Definition \ref{def:local-leaves}, and let $r_1>0$ be as in Lemma \ref{lem:push-trans}. We now fix once and for all a scale $0 < r < \min(r_1,\delta_0/3)$. 
Given $W\in\WWW^u$, define the $W$-Bowen ball centered at $x\in W$ with radius $r$ and order $t\geq 0$ by\footnote{This definition uses the \emph{intrinsic} metric on $W$ and thus differs slightly from \cite{CPZ,CPZ2}, where the leaf measures were defined using Bowen balls in the extrinsic metric from $M$.}
\begin{equation}\label{eqn:u-Bowen}
B_t(x,r,W) = \{y\in W : d_{f_\tau W} (f_\tau y, f_\tau x) < r \text{ for all } \tau \in [0,t] \}.
\end{equation}
Given $W\in\WWW^u$, define a C-structure by $X = W\cap \Lambda$, $\SSS = X\times [0,\infty)$, $U_{(x,t)} = B_t(x,r,W) \cap \Lambda$, $\xi(x,t) = e^{\Phi(x,t)}$, and $\eta(x,t) = \psi(x,t) = e^{-t}$. 
It turns out (for example, as a consequence of \eqref{eqn:mux-K} below) that the Carath\'eodory dimension $\dim_C(W\cap \Lambda)$ for this C-structure is equal to the topological pressure $P(\ph)$, and we will consider the corresponding outer measure on $W\cap \Lambda$ defined by \eqref{eqn:mCa} with $\alpha = P(\ph)$, which
can be described as follows. Given $Z\subset W \cap \Lambda$ and $T>0$, let
\begin{equation}\label{eqn:E}
\EE(Z,T) = \Big\{\EEE \subset (W\cap \Lambda) \times [T,\infty) : 
Z\subset \bigcup_{(x,t) \in \EEE} B_t(x,r,W) \Big\}.
\end{equation}
Members of $\EE(Z,T)$ can be thought of as collections of Bowen balls ``enveloping'' the set $Z$. Then we define $\mmu_W$ by
\begin{equation}\label{eqn:mu}
\mmu_W(Z) = \lim_{T\to\infty} \inf_{\EEE\in \EE(Z,T)} 
\sum_{(x,t) \in \EEE} e^{\Phi(x,t) - t P(\ph)}.
\end{equation}
We will prove in \S\ref{sec:balls} that for every $W\in \WWW^u_\theta$, $x\in W\cap \Lambda$, and $t\geq t_2=t_2(\theta)$, we have $\diam B_t(x,r,W) \leq 2r \lambda^{t-t_2}$; see \eqref{eqn:diam-t-eta}.
Thus this C-structure satisfies the hypothesis of Lemma \ref{lem:get-metric}, and we conclude that $\mmu_W$ is a Borel measure.\footnote{This is the step where it is crucial that we work on $W\cap \Lambda$ and not on $\Lambda$ itself; the outer measure defined by \eqref{eqn:mu} on $\Lambda$ is not a Borel measure. We will return to this point in \S\ref{sec:two-sided}.} 
Observe that different values of $r$ give different (but equivalent) measures $\mmu_W$, but since we fix $r$ throughout the paper, we will suppress this dependence from the notation.

Now we restrict to the case of local unstable leaves and assume that $W = \Wu(x,\delta)$ for some $x\in \Lambda$ and $\delta\in (0,\delta_0]$. In this case we write $\mmu_x = \mmu_W$, and we have the following.\footnote{Theorem \ref{thm:properties} holds for $W\in \WWW^u$ as well, but in that case cannot be deduced directly from \cite{CPZ2}, which considered only local unstable leaves. Rather, it follows from the case here together with the scaling properties in the next section. For $W\in \WWW^u$, the lower bound in \eqref{eqn:mux-K} depends not on the diameter of $W$ itself, but on the diameter of its projection to a local unstable leaf under holonomy.}

\begin{theorem}\label{thm:properties}
Let $F=(f_t)_{t\in\RR},\Lambda,\ph,r$ be as above.
Given $\delta>0$, there is $K>0$ such that for every local unstable leaf $W = \Wu(x,\delta)$ with $x\in \Lambda$, the Borel measure $\mmu_x$ on $W$ defined by \eqref{eqn:mu} satisfies 
\begin{equation}\label{eqn:mux-K}
K^{-1} \leq \mmu_x(W) \leq K.
\end{equation}
Moreover, for every $x\in \Lambda$, the measures
\begin{equation}\label{eqn:nut}
\nu_t := \frac 1t \int_0^t (f_s)_* \mmu_x \,ds = \frac 1t \int_0^t \mmu_x \circ f_{-s} \,ds
\end{equation}
are weak* convergent as $t\to\infty$ to a limiting measure, whose normalization $\mu$ is independent of $x$. This measure $\mu$ is the unique equilibrium measure for $\ph$ on $\Lambda$. It is ergodic, gives positive weight to every (relatively) open set in $\Lambda$, and has the Gibbs property \eqref{eqn:Gibbs}.
\end{theorem}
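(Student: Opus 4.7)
My strategy is to bootstrap from the discrete-time version of this result for the time-one map $f_1$ with induced potential $\ph_1(x) := \int_0^1 \ph(f_s x)\,ds$, as established in \cite{CPZ2}, invoking the classical correspondence \cite{BR75} which identifies the equilibrium measures and pressures of $(F,\ph)$ with those of $(f_1,\ph_1)$. The three bridging steps are: (i) comparing the continuous leaf measure \eqref{eqn:mu} with its discrete analogue; (ii) pushing discrete convergence of pushforward averages to the continuous averages $\nu_t$; and (iii) deducing the remaining structural properties of $\mu$.

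For step (i), when $n \leq t \leq n+1$ one has the nesting $B_{n+1}(y,r,W) \subset B_t(y,r,W) \subset B_n(y,r,W)$ together with $\Phi(y,t) - tP(\ph) = \Phi(y,n) - nP(\ph_1) + O(1)$ uniformly on $\Lambda$, with $\Phi(y,n)$ equal to the $n$-th Birkhoff sum of $\ph_1$ under $f_1$. Rounding each $(y,t) \in \EEE \in \EE(Z,T)$ to $(y,\lfloor t \rfloor)$ changes the corresponding summand in \eqref{eqn:mu} by a bounded factor, so the continuous outer measure is comparable to the analogous construction in \cite{CPZ2} for $(f_1,\ph_1)$, whence \cite{CPZ2} supplies the bounds \eqref{eqn:mux-K}. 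The substitution $s \mapsto s-t$ in \eqref{eqn:mu}, combined with the inclusion $f_t(B_s(y,r,W)) \subset B_{s-t}(f_t y, r, f_t W)$ for $s \geq t$, further yields the flow scaling law
\[
\frac{d(f_t)_*\mmu_x}{d\mmu_{f_t x}|_{f_t W}}(z) = e^{\Phi(f_{-t}z,\,t) - tP(\ph)},
\]
which is the key tool for everything that follows.

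For step (ii), set $\bar\nu_N := \frac{1}{N}\sum_{n=0}^{N-1}(f_1^n)_* \mmu_x$; then \cite{CPZ2} gives $\bar\nu_N \to c\mu_1$ weak* for some $c > 0$, with $\mu_1$ the unique equilibrium measure for $(f_1,\ph_1)$. Splitting $[0,t]$ into unit intervals via Fubini produces $\nu_t = (\lfloor t \rfloor / t)\int_0^1 (f_u)_* \bar\nu_{\lfloor t \rfloor}\,du + O(1/t)$, and joint continuity of the flow with the $F$-invariance of $\mu_1$ (which follows because $\mu_1 = \mu$ by uniqueness and the correspondence with equilibrium states for $F$) gives $\nu_t \to c\mu$ weak*; independence of $x$ is built into this since the limit is characterized uniquely. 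For step (iii), ergodicity is inherited from the $f_1$-ergodicity of $\mu_1$ established via the Hopf argument in \cite{CPZ,CPZ2}, the Gibbs property \eqref{eqn:Gibbs} for $\mu$ follows from the scaling law combined with the local product decomposition of $B_t(y,r)$ into unstable, weak-stable, and flow pieces, and positivity of $\mu$ on relatively open sets in $\Lambda$ is immediate from the Gibbs property and topological transitivity. The principal obstacle is making the comparison in step (i) uniformly quantitative as $T\to\infty$: the bounded-factor errors from rounding to integer times must not accumulate to destroy the delicate cancellation between $e^{\Phi(x,s)}$ and $e^{-sP(\ph)}$ in \eqref{eqn:mu}, and this ultimately rests on the uniform counting bounds for leafwise partition sums deferred to \cite{CPZ2,rB745} in \S\ref{sec:reliance}.
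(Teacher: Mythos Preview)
Your overall architecture matches the paper's: reduce to the discrete-time results of \cite{CPZ2}, compare the continuous and discrete leaf measures by rounding $t$ to an integer, and then pass from discrete to continuous pushforward averages. However, there is a genuine gap at the very first step.

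You apply \cite{CPZ2} to the time-one map $f_1$, but those results require the map to be topologically transitive on $\Lambda$, and transitivity of the flow does \emph{not} imply transitivity of $f_1$. (A trivial counterexample: a single periodic orbit of period $1$ is a transitive flow for which $f_1$ is the identity.) The paper addresses exactly this point with Lemma~\ref{lem:map-transitive}, which shows that the set of $\tau>0$ for which $f_\tau$ is transitive is residual in $(0,\infty)$; one then works with such a $\tau$ rather than with $\tau=1$. Without this, your invocation of \cite{CPZ2} is unjustified, and the subsequent claims---uniqueness of the equilibrium measure for $(f_1,\ph_1)$, its $F$-invariance, the convergence $\bar\nu_N\to c\mu_1$---all rest on an unverified hypothesis.

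Once you replace $f_1$ by $f_\tau$ for a transitive $\tau$, your step~(i) goes through essentially as written. Your step~(ii) is somewhat cleaner than the paper's: the paper takes an arbitrary weak* limit point of $\nu_t$ and shows it is dominated by a multiple of $\mu$ via a compactness argument that lets $\tau\to 0$ along transitive values, whereas you argue directly that $\nu_t \approx \int_0^\tau (f_u)_*\bar\nu_N\,du \to c\mu$ using $F$-invariance of the limit. Your route is valid once transitivity is secured, since uniqueness of the $(f_\tau,\psi)$-equilibrium measure then forces $(f_u)_*\mu_1=\mu_1$ for all $u$; but you should make that deduction explicit rather than appealing to \cite{BR75}, which concerns suspension flows and does not directly give what you need.
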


We will prove Theorem \ref{thm:properties} in \S\ref{sec:prop-pf} using the corresponding results in \cite{CPZ2}, which cover partially hyperbolic diffeomorphisms and can be applied to the time-$\tau$ map $f_\tau$.
A more expository overview of the principles driving the proof is given in \cite[\S6]{CPZ}.

\subsection{Scaling properties}\label{sec:scaling-properties}

An important role in \cite{CPZ,CPZ2} is played by the behavior of the measures $\mmu_x$ under transformation by the dynamics and by holonomy maps. In our present setting we can strengthen the results proved there. Note that now we return to the general case $W\in\WWW^u$, instead of restricting our attention to local unstable leaves.

\begin{definition}\label{def:conformal}
We say that a system of $\Wcs$-transversal measures $\{ \bmu_W : W\in \WWW^u\}$ is \emph{$\ph$-conformal} if given any $t\in \RR$ and $W,f_t(W) \in \WWW^u$, the measures $(f_t)_* \bmu_W$ and $\bmu_{f_t(W)}$ are equivalent, and for each Borel $Z\subset W$ we have
\begin{equation}\label{eqn:pull-scale}
\bmu_{f_t(W)}(f_t Z) = \int_Z e^{tP(\ph) - \Phi(z,t)} \,d\bmu_W(z).
\end{equation}
In terms of the Radon--Nikodym derivatives this is equivalent to
\begin{align}\label{eqn:RN-conf}
\frac{d ((f_{-t})_* \bmu_{f_t(W)})}{d\bmu_W}(z) &= e^{tP(\ph) - \Phi(z,t)}, \\
\label{eqn:RN-conf-2}
\frac{d((f_t)_* \bmu_W)}{d\bmu_{f_t(W)}}(f_t z) &= e^{\Phi(z,t) - tP(\ph)}.
\end{align}
\end{definition}

\begin{definition}\label{def:cts}
Given $W_1,W_2 \in \WWW^u$, a \emph{weak-stable holonomy} between $W_1$ and $W_2$ is a homeomorphism $\pi \colon W_1 \cap \Lambda \to W_2 \cap \Lambda$ such that $\pi(z) \in \Wcs(z)$ for all $z\in W_1 \cap \Lambda$.
If $\pi(z) \in \Wcs(z,\delta)$ for all $z\in W_1\cap \Lambda$, then we say that $\pi$ is a \emph{weak-stable $\delta$-holonomy}.\footnote{Given $\delta>0$, it follows from the local product structure that if $x\in W_1$ and $y\in W_2$ are sufficiently close, then there are neighborhoods of $x,y$ in $W_1\cap \Lambda$ and $W_2\cap \Lambda$ that are related by a weak-stable $\delta$-holonomy.} Replacing $\Wcs$ by $\Ws$ gives the definition of \emph{strong-stable holonomy},\footnote{It is worth noting that strong-stable holonomies do not always exist, for example between $W$ and $f_t(W)$.} and reversing the roles of $u,s$ defines \emph{weak-unstable} and \emph{strong-unstable holonomies}.

We say that a system of $\Wcs$-transversal measures $\{ \bmu_W : W\in \WWW^u\}$ is
\emph{continuous} if for every $\eps>0$ there is $\delta>0$ such that
if $W_1,W_2$ are $u$-admissible manifolds with $d_{C^1}(W_1,W_2) < \delta$ that are related by a weak-stable $\delta$-holonomy $\pi$,
then for every Borel $Z\subset W_1 \cap \Lambda$, we have
\begin{equation}\label{eqn:cts}
\bmu_{W_2}(\pi Z) = e^{\pm\eps} \bmu_{W_1}(Z).
\end{equation}
\end{definition}

\begin{theorem}\label{thm:conf-cts}
Let $F,\Lambda,\ph,r$ be as in \S\ref{sec:measures},
and let $W\in\WWW^u$ and $t\in \RR$. Then the system of $\Wcs$-transversal measures $\{ \mmu_W : W\in \WWW^u\}$ defined in \eqref{eqn:E}--\eqref{eqn:mu} is $\ph$-conformal and continuous.
\end{theorem}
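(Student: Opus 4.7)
My plan is to prove the two properties separately. In both cases, the main tool is to translate covers of $Z$ by $W$-Bowen balls (via the flow or via the holonomy) into covers of the image set, and to compare the resulting weights using H\"older regularity of $\ph$ together with the exponential contraction of $u$-admissible manifolds.

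For $\ph$-conformality, it suffices to treat $t\geq 0$ (the case $t<0$ follows by symmetry, replacing $W$ by $f_t W$). The geometric core is the identity
\[
f_t(B_s(x,r,W)) = B_{s-t}(f_t x, r, f_t W)
\]
valid for $s\geq t$: the inclusion $\subseteq$ is immediate, and $\supseteq$ follows from \eqref{eqn:u-admissible} applied to the intermediate $u$-admissible leaves $f_\tau W$, $\tau\in[0,t]$, which forces the Bowen-ball condition at $\tau'\in[-t,0]$ automatically from that at $\tau'=0$. This gives a bijection $\EE(Z,T)\leftrightarrow \EE(f_t Z, T-t)$ via $(x,s)\mapsto (f_t x, s-t)$, with weights transforming by the cocycle identity $e^{\Phi(x,s)-sP(\ph)} = e^{\Phi(x,t)-tP(\ph)}\cdot e^{\Phi(f_t x, s-t)-(s-t)P(\ph)}$. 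To extract the integral formula \eqref{eqn:pull-scale} from this multiplicative relation, I would partition $Z$ into finitely many Borel sets $Z_j$ on which $z\mapsto e^{tP(\ph)-\Phi(z,t)}$ has oscillation at most $e^\eta$; on a Bowen ball $B_s(x,r,W)$ with $s$ large, $u$-admissibility gives $d(f_\tau z, f_\tau x)\leq r\lambda^{s-\tau}$ for $\tau\in[0,t]$, so H\"older continuity yields $|\Phi(z,t)-\Phi(x,t)|\leq|\ph|_\sigma r^\sigma t\lambda^{(s-t)\sigma}\to 0$. Covering each $Z_j$ by Bowen balls of large order with near-optimal total weight and summing yields $\mmu_{f_t W}(f_t Z)\leq e^{O(\eta)}\int_Z e^{tP(\ph)-\Phi(z,t)}\,d\mmu_W(z)$; the reverse inequality is obtained symmetrically from covers of $f_t Z$.

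For continuity, given a weak-stable $\delta$-holonomy $\pi\colon W_1\cap\Lambda\to W_2\cap\Lambda$ I decompose $\pi(x)=f_{\alpha(x)}(\tilde\pi(x))$ with $\tilde\pi(x)\in\Ws(x,\delta)$ and $|\alpha(x)|<\delta$. The contraction \eqref{eqn:leaves-contract} along $\Ws$, combined with the short time shift, gives $d(f_\tau\pi(y),f_\tau\pi(x))\leq d(f_\tau y,f_\tau x)+C\delta$ for $\tau\geq 0$, so that $\pi(B_s(x,r,W_1)\cap\Lambda)\subseteq B_s(\pi x, r+C\delta, W_2)\cap\Lambda$ (using also that $d_{W_2}$ is comparable to the ambient distance for $C^1$-close unstable leaves); a parallel computation bounds the weight discrepancy by $|\Phi(\pi x,s)-\Phi(x,s)|\leq C'\delta^{\min(1,\sigma)}$ uniformly in $s$. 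The main obstacle is that this correspondence slightly enlarges the radius ($r\mapsto r+C\delta$), whereas \eqref{eqn:cts} prescribes a same-radius comparison. I plan to overcome this by establishing the auxiliary ``uniform radius regularity'' that $\mmu_W^{r_1}$ and $\mmu_W^{r_2}$ are equivalent with multiplicative ratio tending to $1$ as $r_1/r_2\to 1$, uniformly in $W\in\WWW^u$. This should follow from the Gibbs-like local bound $\mmu_W(B_s(x,r,W))\asymp e^{\Phi(x,s)-sP(\ph)}$, itself a consequence of the uniform leafwise counting bounds mentioned in \S\ref{sec:reliance}, together with a doubling argument to pass between scales. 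Once in place, the radius mismatch is absorbed into $e^{\pm\eps}$ and continuity follows.
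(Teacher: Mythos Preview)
Your approach to $\ph$-conformality is essentially the same as the paper's: partition $Z$ into pieces on which $\Phi(\cdot,t)$ is nearly constant, use the bijection $(x,s)\mapsto(f_tx,s-t)$ between enveloping sets, and pass to the limit. One caveat: the identity $f_t(B_s(x,r,W))=B_{s-t}(f_tx,r,f_tW)$ requires the intermediate leaves to be $u$-admissible, which for general $W\in\WWW^u$ only holds after a time threshold (this is the content of Lemma~\ref{lem:trans-Bowen}); you should invoke that lemma rather than \eqref{eqn:u-admissible} directly.

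For continuity, your plan diverges from the paper's and contains a gap. The paper does \emph{not} prove a separate ``uniform radius regularity'' lemma; instead it absorbs the radius increase $r\mapsto r'$ directly by a small shift in the \emph{time} parameter, using \eqref{eqn:W-Bowen-2}:
\[
\pi(B_t(x,r,W_1)\cap\Lambda)\subset B_t(\pi x,r',W_2)\subset B_{t-\eta}(\pi x,r'\lambda^\eta,W_2)\subset B_{t-\eta}(\pi x,r,W_2),
\]
provided $r'\lambda^\eta\leq r$. This is precisely the step that exploits the continuous time parameter and is unavailable in discrete time (see the discussion in \S\ref{sec:alternate}). Your proposed route via radius regularity is not obviously wrong, but the ``doubling argument'' you sketch would only give that $\mmu_W^{r_1}/\mmu_W^{r_2}$ is \emph{bounded}, not that it tends to $1$ as $r_1/r_2\to 1$; obtaining the latter from Gibbs-type bounds alone is exactly what remains open in the discrete-time setting. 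In continuous time the way to prove radius regularity would in fact be the same time-shift trick, so you may as well use it directly as the paper does. Also, your additive estimate $d(f_\tau\pi(y),f_\tau\pi(x))\leq d(f_\tau y,f_\tau x)+C\delta$ is not quite what is needed for the \emph{intrinsic} leaf metric on $f_\tau W_2$; the paper instead proves a multiplicative distortion bound (Lemma~\ref{lem:stretch}) using the $C^1$-closeness hypothesis, which is what the definition of continuity actually provides.
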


\begin{remark}
In the discrete-time setting of \cite{CPZ,CPZ2}, the natural analogue of \eqref{eqn:E}--\eqref{eqn:mu} defines a system of transversal measures that is $\ph$-conformal, but it is not known whether or not the system defined there is also continuous in the sense of \eqref{eqn:cts}; all that is proved is that this relationship holds for \emph{some} $\eps>0$, which suffices for the analogue of Theorem \ref{thm:properties}, but not for the remaining results that we prove here. See \S\ref{sec:alternate} for more details.
\end{remark}

\begin{remark}
We will eventually see in Corollary \ref{cor:unique} below that up to a scalar multiple, the measures defined in \eqref{eqn:E}--\eqref{eqn:mu} give the \emph{only} system of $\Wcs$-transversal measures that is $\ph$-conformal and continuous. 
Two special cases are worth highlighting.
\begin{itemize}
\item When $\ph\equiv 0$, we have $P(\ph) = \htop(F) =: h$, the topological entropy, and \eqref{eqn:pull-scale} gives $\mmu_{f_t(W)} = e^{th} (f_t)_* \mmu_W$, which is the scaling property satisfied by the Margulis measures on unstable leaves.
\item When $\phu$ is the \emph{geometric potential}
\begin{equation}\label{eqn:geom}
\phu(x) = -\lim_{t\to 0} \frac 1t \log \det (Df_t|_{\Eu_x})
\end{equation}
and $\Lambda$ is an attractor, it can be shown that $P(\phu)=0$ and that the unique equilibrium measure is the SRB measure on $\Lambda$ \cite[\S7.4]{FH19}.
Then $e^{\Phu(x,t)} = \det(Df_t|_{\Eu_x})^{-1}$ and \eqref{eqn:pull-scale} gives 
\begin{equation}\label{eqn:Leb-scale}
\mmu_{f_t(W)}(f_t Z) = \int_Z \det (Df_t|_{\Eu_z}) \,d\mmu_W(z).
\end{equation}
which is the same scaling property satisfied by leaf volume.
\end{itemize}
Since the Margulis measures and leaf volumes also satisfy the continuity property, it will follow from the uniqueness result in Corollary \ref{cor:unique} below that in these two cases these systems of measures are in fact given by $\mmu_W$ (up to a global scaling constant).
\end{remark}

It turns out that the properties of $\ph$-conformality and continuity are enough to completely describe how a system of $\Wcs$-transversal measures transforms under weak-stable holonomies. We will restrict our attention to $\delta_0$-holonomies, since this is all we need later on.\footnote{In fact we will prove the following results separately for global holonomies along flow lines and along strong-stable manifolds, but to combine these results into a formula for weak-stable holonomies requires some care because $t$ in \eqref{eqn:omega+} may not be uniquely determined by $x$ and $y$ unless we work locally.}

Given $x\in \Lambda$ and $y\in \Wcs(x,\delta_0) \cap \Lambda$, there is a unique $t=t(x,y)\in (-\delta_0,\delta_0)$ such that $f_t(x) \in \Ws(y,\delta_0)$. We define
\begin{equation}\label{eqn:omega+}
\omega^+(x,y) = \Phi(x,t) - tP(\ph) + \int_0^\infty (\ph(f_{\tau+t} x) - \ph(f_\tau y)) \,d\tau.
\end{equation}
The improper integral converges absolutely because $d(f_{\tau+t} x, f_\tau y) \to 0$ exponentially quickly and $\ph$ is H\"older continuous.

\begin{theorem}\label{thm:cs-hol}
Let $F,\Lambda,\ph$ be as in \S\ref{sec:measures},
and let $\{ \bmu_W : W\in \WWW^u \}$ be any continuous and $\ph$-conformal system of $\Wcs$-transversal measures. Suppose that $W_1,W_2 \in \WWW^u$ are related by a weak-stable $\delta_0$-holonomy $\pi\colon W_1\cap \Lambda\to W_2 \cap \Lambda$. Then the measures $\pi_* \bmu_{W_1}$ and $\bmu_{W_2}$ are equivalent, and we have
\begin{equation}\label{eqn:cs-hol}
\frac{d(\pi_*\bmu_{W_1})}{d\bmu_{W_2}}(\pi(z)) = e^{\omega^+(z, \pi z)}.
\end{equation}
\end{theorem}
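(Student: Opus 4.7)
The plan is to prove the formula separately for the two basic types of weak-stable holonomies—flow holonomies $\pi=f_t$ and strong-stable holonomies where $\pi(z)\in\Ws(z,\delta_0)$—and then combine them for the general weak-stable case using the additivity $\omega^+(z,\pi z)=\omega^+(z,f_{t}z)+\omega^+(f_t z,\pi z)$ with $t=t(z,\pi z)$ together with the chain rule for Radon--Nikodym derivatives. The flow case is immediate from $\ph$-conformality: when $\pi=f_t$ we have $t(z,\pi z)=t$ and the improper integral in \eqref{eqn:omega+} vanishes, so $\omega^+(z,f_tz)=\Phi(z,t)-tP(\ph)$, which is exactly the exponent in \eqref{eqn:RN-conf-2}.

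For the strong-stable case the central idea is to push everything forward by $f_T$ for large $T$. Setting $\tilde\pi_T:=f_T\circ\pi\circ f_{-T}$, this is again a strong-stable holonomy, and \eqref{eqn:leaves-contract} gives $d(w,\tilde\pi_Tw)\le\lambda^T\delta_0\to 0$. Lemma \ref{lem:push-trans} together with the $C^1$-continuity of local leaves in their basepoints (Definition \ref{def:local-leaves}) implies that, restricted to small pieces, $f_TW_1$ and $f_TW_2$ are $u$-admissible and $C^1$-close for $T$ large, so the continuity property \eqref{eqn:cts} applies to $\tilde\pi_T$ with error $\eps(T)\to 0$. I will then compute $\int (h\circ\pi)\,d\bmu_{W_1}$ for a continuous test function $h$ on $W_2\cap\Lambda$ by (i) using $\ph$-conformality \eqref{eqn:pull-scale} to transfer the integral to $f_TW_1$, (ii) using \eqref{eqn:cts} to replace integration against $\bmu_{f_TW_1}$ by integration against $\bmu_{f_TW_2}$ via $\tilde\pi_T$, up to a factor $e^{\pm\eps(T)}$, and (iii) using $\ph$-conformality again to pull back to $W_2$. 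The two conformality exponents do not cancel because the integrand involves $\pi z$ on one side and $y$ on the other, leaving
\[
\int_{W_1}h(\pi z)\,d\bmu_{W_1}(z)=e^{\pm\eps(T)}\int_{W_2}h(y)\,e^{\Phi(\pi^{-1}y,T)-\Phi(y,T)}\,d\bmu_{W_2}(y).
\]

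Since $\pi^{-1}y\in\Ws(y,\delta_0)$, H\"older continuity of $\ph$ and \eqref{eqn:leaves-contract} yield $|\ph(f_s\pi^{-1}y)-\ph(f_s y)|\le|\ph|_\sigma(\lambda^s\delta_0)^\sigma$, so the exponent on the right converges uniformly to $\omega^+(\pi^{-1}y,y)=\int_0^\infty(\ph(f_s\pi^{-1}y)-\ph(f_s y))\,ds$ and the integrands are uniformly bounded in $T$. Letting $T\to\infty$ and applying dominated convergence gives \eqref{eqn:cs-hol} in integrated form; equivalence of $\pi_*\bmu_{W_1}$ and $\bmu_{W_2}$ follows from the uniform two-sided bound on this exponent.

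The main obstacle is the geometric bookkeeping in the strong-stable step: one must simultaneously verify that $f_TW_1,f_TW_2$ are $u$-admissible, are $C^1$-close to each other (not merely each close to some local unstable leaf), and are related by a weak-stable $\delta_T$-holonomy with $\delta_T\to 0$. All three follow from uniform contraction on $\Ws$ and Lemma \ref{lem:push-trans}, but require localizing to small pieces of $W_1\cap\Lambda$ and reassembling the estimates. For the general weak-stable case the further subtlety, flagged in the footnote to the statement, is that the flow-time function $z\mapsto t(z,\pi z)$ varies along $W_1$, so the decomposition into flow and strong-stable parts is only local; the final task is to patch the resulting local Radon--Nikodym formulas using the continuity of $\omega^+$ in its arguments.
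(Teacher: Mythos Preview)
Your proposal is correct and follows essentially the same mechanism as the paper: push the two transversals forward under the flow until they are $u$-admissible and $C^1$-close, apply the continuity property there, and use $\ph$-conformality (via the chain rule for Radon--Nikodym derivatives) to transfer the result back to $W_1,W_2$; the limiting Birkhoff-sum difference produces $\omega^+$.

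The organizational difference is that you factor the weak-stable holonomy into a flow part and a strong-stable part and handle these separately (then patch via the cocycle relation), whereas the paper treats the general weak-stable holonomy in a single step by allowing \emph{asymmetric} flow times: for each $x\in W_1\cap\Lambda$ with flow shift $\tau(x)$, it pushes $W_1$ by $s=t+\tau(x)$ and $W_2$ by $t$, so that $\pi_{s,t}=f_t\circ\pi\circ f_{-s}$ becomes a $\delta$-holonomy between $u$-admissibles directly. This packages your two steps into one and slightly streamlines the localization (one neighborhood $W_1'$ per point rather than a flow step followed by a strong-stable step), but the content is the same; your version has the pedagogical advantage of isolating the strong-stable computation, where the exponent $\Phi(\pi^{-1}y,T)-\Phi(y,T)$ and its limit are most transparent.
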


 It is worth noting the special cases
\begin{equation}\label{eqn:omega+Ws}
\omega^+(x,y) = \int_0^\infty (\ph(f_\tau x) - \ph(f_\tau y)) \,d\tau
\qquad\text{when } x\in \Ws(y),
\end{equation}
and for points on the same orbit,
\begin{equation}\label{eqn:omega-flow}
\omega^+(x,f_t x) = \Phi(x,t) - tP(\ph)
\end{equation}
In particular, when $W_2 = f_t(W_1)$ and $\pi=f_t$, \eqref{eqn:cs-hol} reduces to the $\ph$-conformality property \eqref{eqn:RN-conf-2}.

We will also use the cocycle relation
\begin{equation}\label{eqn:cocycle}
\omega^+(x,y) = \omega^+(x,z) + \omega^+(z,y)
\quad\text{for } y,z\in \Wcs(x,\delta_0/2)
\end{equation}
which 
can be proved by first observing that the definition \eqref{eqn:omega+} gives
\begin{equation}\label{eqn:omega+2}
\begin{aligned}
\omega^+(x,y) &= \lim_{T\to\infty} \big(\Phi(x,T+t(x,y)) - \Phi(y,T)\big) - t(x,y) P(\ph) \\
&= \lim_{T\to\infty}
\big(\Phi(x,T) - \Phi(y,T-t(x,y))\big) - t(x,y) P(\ph).
\end{aligned}
\end{equation}
Thus the two terms on the right-hand side of \eqref{eqn:cocycle} can be written as
\begin{align*}
\omega^+(x,z) &= \lim_{T\to\infty} \big(\Phi(x,T+t(x,z)) - \Phi(z,T) \big) - t(x,z) P(\ph), \\
\omega^+(z,y) &= \lim_{T\to\infty} \big(\Phi(z,T) - \Phi(y,T - t(z,y)) \big) - t(z,y) P(\ph);
\end{align*}
adding these and using the identity $t(x,y) = t(x,z) + t(z,y)$, we see that the right-hand side of \eqref{eqn:cocycle} is equal to
\begin{align*}
\lim_{T\to\infty} \big( \Phi(x,T+t(x,z)) - \Phi(y,T-t(z,y)) \big) - t(x,y) P(\ph).
\end{align*}
Writing $T' = T - t(z,y)$ so that $T + t(x,z) = T' + t(x,y)$, this becomes
\[
\lim_{T'\to\infty} \big( \Phi(x,T' + t(x,y)) - \Phi(y,T') \big) - t(x,y) P(\ph),
\]
which is equal to $\omega^+(x,y)$ by \eqref{eqn:omega+2}, thus proving \eqref{eqn:cocycle}.

\subsection{Reference measures on stable leaves and transversals}\label{sec:others}

Recall from Definition \ref{def:cs-trans} that $\WWW^s$ denotes the set of $C^1$ injectively immersed submanifolds $W\subset M$ that are transverse to the weak-unstable direction. Given $W\in \WWW^s$, we construct a Borel measure $\ms_W$ on $W\cap\Lambda$ by applying the construction in \S\ref{sec:measures} to the time-reversed flow. In terms of the original flow, this can be defined using \emph{backwards Bowen balls}: given $x\in \Lambda$ and $t\geq 0$, let
\[
B_t^-(x,r,W) = \{z\in M : d_{f_{-\tau} W}(f_{-\tau} z, f_{-\tau} x) < r \text{ for all } \tau \in [0,t]\}.
\]
Then given $Z\subset W \cap \Lambda$ and $T>0$, let
\begin{equation}\label{eqn:E-}
\EE^-(Z,T) = \Big\{\EEE \subset (W\cap \Lambda) \times [T,\infty) : 
Z\subset \bigcup_{(x,t) \in \EEE} B_t^-(x,r,W) \Big\}.
\end{equation}
Finally, define $\ms_W$ by
\begin{equation}\label{eqn:ms}
\ms_W(Z) = \lim_{T\to\infty} \inf_{\EEE\in \EE^-(Z,T)} 
\sum_{(x,t) \in \EEE} e^{\Phi(f_{-t}x,t) - t P(\ph)}.
\end{equation}
Observe that the Birkhoff integral in \eqref{eqn:ms} evaluates as
\[
\Phi(f_{-t} x, t) = \int_0^t \ph(f_{\tau -t} x) \,d\tau
= \int_{-t}^0 \ph(f_s x)\,ds
= -\Phi(x,-t);
\]
care must be taken with the signs because the $\Phi$ notation favors the initial point of the orbit segment, rather than the final one. Theorem \ref{thm:properties} shows that this defines a positive finite Borel measure on each local stable leaf,\footnote{As with $\mmu$, positivity and finiteness on more general compact $W\in \WWW^s$ will follow from this together with the results on scaling under holonomies.}
and that pushing \emph{backwards} and averaging gives a family of measures converging to a scalar multiple of the unique equilibrium measure.

Versions of the scaling properties in \S\ref{sec:scaling-properties} hold for the system of $\Wcu$-transversal measures $\{\ms_W : W\in \WWW^s\}$.

\begin{definition}
We say that a system of $\Wcu$-transversal measures $\{\bms_W : W\in \WWW^s\}$ is \emph{$\ph$-conformal} if given any $t\in \RR$ and $W,f_t(W) \in \WWW^s$, the measures $(f_t)_* \ms_W$ and $\ms_{f_t(W)}$ are equivalent, and for each Borel $Z\subset W$ we have
\begin{equation}\label{eqn:pull-scale-ms}
\bms_{f_t(W)}(f_t Z) = \int_Z e^{-tP(\ph) + \Phi(z,t)} \,d\bms_W(z).
\end{equation}
The system is \emph{continuous} if for every $\eps>0$ there is $\delta>0$ such that if $W_1,W_2$ are $s$-admissible manifolds with $d_{C^1}(W_1,W_2) < \delta$ that are related by a weak-unstable $\delta$-holonomy $\pi$, then for every Borel $Z\subset W_1 \cap \Lambda$, we have
\begin{equation}\label{eqn:cts-ms}
\bms_{W_2}(\pi Z) = e^{\pm \eps} \ms_{W_1}(Z).
\end{equation}
\end{definition}

Given $x\in \Lambda$ and $y\in \Wcu(x,\delta_0) \cap \Lambda$, there is a unique $t=t(x,y) \in (-\delta_0,\delta_0)$ such that $f_t(x) \in \Wu(y,\delta_0)$. We define
\begin{equation}\label{eqn:omega-}
\omega^-(x,y) := -\Phi(x,t) + tP(\ph) + \int_0^\infty( \ph(f_{t-\tau} x) - \ph(f_{-\tau} y) ) \,d\tau
\end{equation}
and observe that $\omega^-$ satisfies the cocycle equation \eqref{eqn:cocycle}. Note that when $y=f_t x$, we have 
\begin{equation}\label{eqn:omega+-}
\omega^-(x, f_t x) = -\Phi(x,t) + tP(\ph) = -\omega^+(x,f_t x).
\end{equation}
The following result is just Theorem \ref{thm:cs-hol} with time reversed.

\begin{theorem}\label{thm:cu-hol}
Let $F,\Lambda,\ph$ be as in \S\ref{sec:measures},
and let $\{\bms_W : W\in \WWW^s\}$ be any continuous $\ph$-conformal system of $\Wcu$-transversal measures. Suppose that $W_1,W_2 \in \WWW^s$ are related by a weak-unstable $\delta_0$-holonomy $\pi \colon W_1 \cap \Lambda \to W_2\cap \Lambda$. Then the measures $\pi_* \bms_{W_1}$ and $\bms_{W_2}$ are equivalent, and we have
\begin{equation}\label{eqn:cu-hol}
\frac{d(\pi_*\bms_{W_1})}{d\bms_{W_2}}(\pi(z)) = e^{\omega^-(z, \pi z)}.
\end{equation}
\end{theorem}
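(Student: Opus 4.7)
The plan is to deduce Theorem \ref{thm:cu-hol} from Theorem \ref{thm:cs-hol} applied to the time-reversed flow $G = (g_t)_{t\in\RR}$ defined by $g_t := f_{-t}$. Under time reversal the roles of $\Es$ and $\Eu$ are interchanged, so $\Lambda$ remains a basic set (topological transitivity and local maximality are time-symmetric), the transversal classes swap as $\WWW^s_F = \WWW^u_G$ and $\WWW^u_F = \WWW^s_G$, weak-unstable $\delta_0$-holonomies for $F$ are precisely weak-stable $\delta_0$-holonomies for $G$, and $P_F(\ph) = P_G(\ph)$ because measure-theoretic entropy is invariant under inversion of the generating automorphism $f_1 \mapsto g_1 = f_{-1}$.

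The first step is to verify that $\{\bms_W : W \in \WWW^s_F\}$, reinterpreted as indexed by $\WWW^u_G$, is a continuous $\ph$-conformal system of $\Wcs_G$-transversal measures for $G$. Continuity is immediate because \eqref{eqn:cts-ms} is exactly the $G$-analogue of \eqref{eqn:cts}. For conformality, the change of variable $u = -s$ gives the identity $\Phi_G(z,\tau) := \int_0^\tau \ph(g_s z)\,ds = -\Phi(z,-\tau)$; substituting this along with $g_{-t} = f_t$ into the $G$-version of \eqref{eqn:pull-scale} reproduces \eqref{eqn:pull-scale-ms} verbatim. Hence the hypotheses of Theorem \ref{thm:cs-hol} for the flow $G$ are in force, and that theorem delivers $d(\pi_*\bms_{W_1})/d\bms_{W_2}(\pi z) = e^{\omega_G^+(z,\pi z)}$, where $\omega_G^+$ denotes the cocycle \eqref{eqn:omega+} for $G$.

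The remaining step, which I expect to be the main technical obstacle, is the identification $\omega_G^+(x,y) = \omega^-(x,y)$. The parameter $t_G(x,y) \in (-\delta_0,\delta_0)$ characterized by $g_{t_G}(x) \in \Ws_G(y,\delta_0) = \Wu_F(y,\delta_0)$ equals $-t(x,y)$, from which $\Phi_G(x,t_G) - t_G P(\ph) = -\Phi(x,t(x,y)) + t(x,y) P(\ph)$, matching the leading terms of \eqref{eqn:omega-}. Substituting $g_s = f_{-s}$ further converts $\int_0^\infty (\ph(g_{\tau + t_G} x) - \ph(g_\tau y))\,d\tau$ into the improper integral appearing in \eqref{eqn:omega-}. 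Convergence of this integral is guaranteed by exponential contraction of $d(f_{t(x,y)-\tau}x, f_{-\tau}y)$ on the common weak-unstable leaf together with H\"older continuity of $\ph$, exactly as in \eqref{eqn:omega+}.

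The only real difficulty is bookkeeping around signs: the convention for $\Phi$ at negative times, the substitution in the improper integral, and the opposite sign conventions for $t_G$ versus $t$ must all be tracked carefully, since a single misplaced sign would silently convert $e^{\omega^-}$ into $e^{-\omega^-}$. Once the time-reversal dictionary is pinned down, no new dynamical input is required beyond what is already packaged in Theorem \ref{thm:cs-hol}.
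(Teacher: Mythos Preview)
Your proposal is correct and is exactly the approach the paper takes: the paper simply states that Theorem~\ref{thm:cu-hol} ``is just Theorem~\ref{thm:cs-hol} with time reversed,'' and your argument supplies precisely the time-reversal dictionary (including the verification that $\omega_G^+ = \omega^-$) that makes this assertion rigorous.
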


By Theorem \ref{thm:conf-cts} with time reversed, the system of $\Wcu$-transversal measures given by \eqref{eqn:E-} and \eqref{eqn:ms} is continuous and $\ph$-conformal, so it satisfies the conclusion of Theorem \ref{thm:cu-hol}.

\subsection{A product construction}\label{sec:product}

In this section we once again restrict our attention to local stable and unstable leaves. Let $\bW^s \subset \WWW^s$ denote the collection of local stable leaves, and $\bW^u \subset \WWW^u$ the collection of local unstable leaves.\footnote{Recall from Definition \ref{def:local-leaves} that $W\in \bW^s$ need not be of the form $\Ws(x,\delta)$: in general, any subset of a local stable leaf that is a topological ball of maximal dimension is itself a local stable leaf, and similarly for unstable leaves.} With $\delta_0>0$ as in Definition \ref{def:local-leaves}, we denote the collection of local weak-stable leaves by
\begin{equation}\label{eqn:Wcs}
\bW^{cs} := \Big\{ W_\delta := \bigcup_{t\in (-\delta,\delta)} f_t(W) : W\in \bW^s, \delta \in (0,\delta_0] \Big \},
\end{equation}
and similarly for the collection $\bW^{cu}$ of local weak-unstable leaves.

By a \emph{system of measures on local unstable leaves} we mean a family of Borel measures $\{ \bmu_W : W \in \bW^u\}$ satisfying the support and consistency properties from the beginning of \S\ref{sec:measures}, and similarly with `unstable' replaced by stable, weak-unstable, and weak-stable. The definitions of continuity and $\ph$-conformality for these systems remain the same as for the corresponding systems on transversals, and Theorems \ref{thm:cs-hol} and \ref{thm:cu-hol} go through here with identical proofs.

Now suppose $\{\bms_W : W\in \bW^s\}$ and $\{\bmu_W : W\in \bW^u\}$ are continuous and $\ph$-conformal systems of measures on local stable and unstable leaves. Define corresponding systems of measures on local weak-stable and weak-unstable leaves by
\begin{equation}\label{eqn:weak-measures}
\bmcs_{W_\delta} = \int_{-\delta}^\delta \bms_{f_t W} \,dt
\qquad\text{and}\qquad
\bmcu_{W_\delta} = \int_{-\delta}^\delta \bmu_{f_t W} \,dt.
\end{equation}
Let $L,\delta_1>0$ be as in Definition \ref{def:lps}, so that given any $x,y \in \Lambda$ with $d(x,y) \in \delta_1$, the bracket $[x,y]$ is well-defined as the unique point in $\Wcs(x,L\delta_1) \cap \Wu(y,L\delta_1)$. Thus given $q\in \Lambda$ and $\delta \in (0,\delta_1/2)$ and writing
\[
\Ru_q := \Wu(q,\delta) \cap \Lambda,
\qquad
\Rcs_q := \Wcs(q,\delta) \cap \Lambda,
\]
we can define
\[
R_q := \{ [x,y] : x\in \Ru_q, y\in \Rcs_q\}.
\]
Given $x\in \Ru_q$ and $y\in \Rcs_q$, we write $\Psi_q(x,y) = [x,y]$, so that $\Psi_q$ is a homeomorphism between $\Ru_q\times \Rcs_q$ and $R_q$.

From now on we fix $\delta$ sufficiently small that $\diam R_q < \delta_1$, and thus the bracket is well-defined for every pair of points in $R_q$. For this fixed value of $\delta$, we write
\begin{equation}\label{eqn:bmu-cs}
\bmu_q := \bmu_{\Wu(q,\delta)}
\qquad\text{and}\qquad
\bmcs_q := \bmcs_{\Wcs(q,\delta)}.
\end{equation}
Given $z = \Psi_q(x,y) \in R_q$, we write
\[
\Ru_q(z) := \Wu(z,L\delta) \cap R_q,
\qquad
\Rcs_q(z) := \Wcs(z,L\delta) \cap R_q,
\]
and we observe that $z\in \Rcs_q(x) \cap \Ru_q(y)$,
so that $x\in \Rcs_q(z) \cap \Ru_q(q)$ and $y\in \Rcs_q(q) \cap \Ru_q(z)$.  In other words,
\begin{equation}\label{eqn:xyz}
z = [x,y],\qquad
x = [z,q],\qquad
y = [q,z].
\end{equation}
Now we can formulate the main result on the product construction of the equilibrium measure.

\begin{theorem}\label{thm:product}
Let $F,\Lambda,\ph,\delta$ be as in \S\ref{sec:measures},
and let $\{\bmu_x,\bmcs_x : x\in \Lambda\}$ be given by \eqref{eqn:weak-measures} and \eqref{eqn:bmu-cs} from any continuous $\ph$-conformal systems of measures on local stable and unstable leaves.
Then for every $q\in \Lambda$, the following equations all define the same measure on $R_q$: given a Borel set $Z \subset R_q = \Psi_q(\Ru_q \times \Rcs_q)$, put
\begin{align}
\label{eqn:muq}
\mu_q(Z) &= \int_Z e^{\omega^+(z,[z,q]) + \omega^-(z,[q,z])} \,d((\Psi_q)_*(\bmu_q\times \bmcs_q))(z), \\
\label{eqn:muq2}
\mu_q(Z) &= \int_{\Rcs_q} \int_{\Ru_q} e^{\omega^+([x,y],x) + \omega^-([x,y],y)}  
\mathbf{1}_Z ([x,y]) \,d\bmu_q(x) \,d\bmcs_q(y),\\
\label{eqn:muq3}
\mu_q(Z) &= \int_{\Rcs_q} \int_{\Ru_q(y) \cap Z}
e^{\omega^-(z,y)} \, d\bmu_y(z) \,d\bmcs_q(y), \\
\label{eqn:muq4}
\mu_q(Z) &= \int_{\Ru_q} \int_{\Rcs_q(x) \cap Z}
e^{\omega^+(z,x)} \, d\bmcs_x(z) \,d\bmu_q(x).
\end{align}
Moreover, there is a unique Borel measure $\mu$ on $\Lambda$ such that $\mu(Z) = \mu_q(Z)$ for all $q\in \Lambda$ and $Z\subset R_q$. The measure $\mu$ is flow-invariant, nonzero, and finite. It has the Gibbs property and thus is a scalar multiple of the unique equilibrium measure.
\end{theorem}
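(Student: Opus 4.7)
The proof decomposes into four steps: (1) equivalence of the four formulas defining $\mu_q$; (2) gluing the local measures into a single Borel measure $\mu$ on $\Lambda$; (3) flow invariance; and (4) the Gibbs property, which together with flow invariance and the uniqueness statement cited in \S\ref{sec:reliance} identifies $\mu$ as a scalar multiple of the unique equilibrium measure.

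For (1), the identity \eqref{eqn:muq}=\eqref{eqn:muq2} is simply the definition of the pushforward under $\Psi_q$. To derive \eqref{eqn:muq3} from \eqref{eqn:muq2}, fix $y \in \Rcs_q$ and consider the weak-stable holonomy $\pi_y \colon \Ru_q \to \Ru_q(y)$ defined by $\pi_y(x) = [x,y]$. By Theorem \ref{thm:cs-hol}, the change of variable $z = [x,y]$ transforms $d\bmu_q(x)$ into $e^{-\omega^+([z,q], z)}\,d\bmu_y(z)$. The cocycle \eqref{eqn:cocycle}, applied with $y = x$ and using $\omega^+(x,x) = 0$, gives $\omega^+(u,v) = -\omega^+(v,u)$, so the two $\omega^+$ factors in \eqref{eqn:muq2} cancel, leaving only $e^{\omega^-(z,y)}$. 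The analogous computation using Theorem \ref{thm:cu-hol} and the cocycle for $\omega^-$ yields \eqref{eqn:muq4}.

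Step (2) is the main technical obstacle. Given $q_1, q_2\in\Lambda$ with $R_{q_1}\cap R_{q_2}\neq\emptyset$, formula \eqref{eqn:muq3} shows that the inner integral over $\Ru_q(y)\cap Z$ depends only on $y$, not on $q$; dependence on the basepoint is carried entirely by $\bmcs_q$. Passing from $q_1$ to $q_2$ corresponds to a weak-unstable holonomy between $\Wcs(q_1,\delta)$ and $\Wcs(q_2,\delta)$, and by the extension of Theorem \ref{thm:cu-hol} to systems of measures on local weak-stable leaves (noted after \eqref{eqn:weak-measures}), this holonomy transports $\bmcs_{q_1}$ to $\bmcs_{q_2}$ with Radon--Nikodym derivative $e^{\omega^-}$; the cocycle relation for $\omega^-$ ensures that this extra factor is exactly absorbed by the shift in the inner weight $e^{\omega^-(z,y)}$, giving $\mu_{q_1}(Z) = \mu_{q_2}(Z)$ on the overlap. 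A patching argument over a finite cover of $\Lambda$ by such rectangles then produces a well-defined Borel measure $\mu$; positivity and finiteness follow from Theorem \ref{thm:properties} together with the boundedness of $e^{\omega^\pm}$ on compact rectangles.

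Flow invariance (step 3) is verified directly in \eqref{eqn:muq3}. Under $f_s$, the $\ph$-conformality \eqref{eqn:pull-scale} of $\bmu_y$ introduces $e^{sP(\ph) - \Phi(z,s)}$, the analogous scaling for $\bmcs_q$ derived from \eqref{eqn:pull-scale-ms} and \eqref{eqn:weak-measures} gives $e^{-sP(\ph) + \Phi(y,s)}$, and the transformation law $\omega^-(f_s z, f_s y) = \omega^-(z,y) + \Phi(z,s) - \Phi(y,s)$ (proved using the limit representation for $\omega^-$ analogous to \eqref{eqn:omega+2}) contributes $e^{\Phi(z,s) - \Phi(y,s)}$ to the new weight. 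These three factors multiply to $1$, proving $\mu_{f_s q}(f_s Z) = \mu_q(Z)$. For the Gibbs property (step 4), observe that $B_t(x,r)$ has an approximate local product decomposition: up to bounded distortion it is $\Psi_x$ applied to an unstable piece $U \subset \Wu(x)$ (whose $f_t$-image is a uniformly bounded local unstable leaf) times a weak-stable piece of uniformly positive $\bmcs_x$-measure. Applying \eqref{eqn:muq3} with $q = x$ and using boundedness of $e^{\omega^\pm}$, we obtain $\mu(B_t(x,r)) \asymp \bmu_x(U)$; by \eqref{eqn:pull-scale} this is $\asymp e^{\Phi(x,t) - tP(\ph)}$, since $f_t$ sends $U$ to a local leaf of $\Theta(1)$ measure (Theorem \ref{thm:properties}). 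This is the Gibbs property \eqref{eqn:Gibbs}, and together with flow invariance and the uniqueness cited in \S\ref{sec:reliance}, $\mu$ is a scalar multiple of the unique equilibrium measure.
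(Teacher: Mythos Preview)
Your proof is correct and follows the same overall four-step structure as the paper. Step (1) matches exactly. In step (3) you compute with \eqref{eqn:muq3} while the paper computes with \eqref{eqn:muq4}; these are dual and equally valid. In step (4) you sketch both Gibbs bounds; the paper observes that the \emph{upper} bound alone suffices, since then $\mu$ is absolutely continuous with respect to the unique ergodic invariant Gibbs measure and hence a scalar multiple of it.

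The one place where your route genuinely diverges is step (2). You change basepoint from $q_1$ to $q_2$ in one move, by applying a (strong-)unstable holonomy between $\Wcs(q_1)$ and $\Wcs(q_2)$, invoking the extension of Theorem~\ref{thm:cu-hol} to $\bmcs$, and then using the cocycle for $\omega^-$ to cancel the Jacobian against the shift in the inner weight. This works, but it leans on the (asserted but not proved) extension of Theorem~\ref{thm:cu-hol} to the weak-stable measures, and one must check that the holonomy is defined on the relevant support (you do this implicitly by noting the inner integral depends only on the unstable leaf through $y$). The paper takes a more elementary route: when $p\in\Rcs_q$, consistency is immediate from \eqref{eqn:muq3} because $\bmcs_q$ and $\bmcs_p$ agree on their common leaf; when $p\in\Ru_q$, it is immediate from \eqref{eqn:muq4} for the same reason; and the general case follows by chaining through $q$, $[x,q]$, $x$, $[x,p]$, $p$ for any $x\in R_q\cap R_p$. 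This avoids any holonomy computation at all. Your approach has the virtue of being a single direct calculation; the paper's has the virtue of using nothing beyond the agreement of leaf measures on overlaps.
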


If one redefines the bracket using $\Ws$ and $\Wcu$ instead of $\Wu$ and $\Wcs$, then one obtains an analogue of Theorem \ref{thm:product} with $u$ replaced by $cu$, and $cs$ replaced by $s$, everywhere they appear.

The following is an immediate consequence of \eqref{eqn:muq3} and the definition of conditional measures (see \cite[\S14.2]{yC16} or \cite[\S\S5.1--5.2]{VO16}).

\begin{corollary}\label{cor:conditionals}
Given $q\in \Lambda$, consider the partition of $R_q$ into local unstable leaves:
let $\hat\mu_q$ be the transversal measure on $\Rcs_q$ defined by $\hat\mu_q(Y) = \mu(\bigcup_{y\in Y} \Ru_q(y))$, and let $\{\mu_y^u : y\in \Rcs_q\}$ denote the conditional measures of $\mu$ characterized (for $\hat\mu_q$-a.e.\ $y$) by
\begin{equation}\label{eqn:conditionals}
\mu(Z) = \int_{\Rcs_q} \mu_y^u(Z\cap \Ru_q(y)) \,d\hat\mu_q(y).
\end{equation}
Then $\hat\mu_q$ and $\bmcs_q$ are equivalent, and for $\hat\mu_q$-a.e. $y$ (hence $\bmcs_q$-a.e. $y$) the measures $\mu_y^u$ and $\bmu_y$ are equivalent as well. Moreover, defining $h\colon \Rcs_q \to (0,\infty)$ by
\begin{equation}\label{eqn:hy}
h(y) := \int_{\Ru_q} e^{\omega^+([x,y],x)} \,d\bmu_q(x)
= \bmu_y(\Ru_q(y)),
\end{equation}
where the two expressions are equal by Theorem \ref{thm:cs-hol}, we have
\begin{equation}\label{eqn:mu-m}
\frac{d\hat\mu_q}{d\bmcs_q}(y) = h(y)
\quad\text{and}\quad
\frac{d\mu_y^u}{d\bmu_y}(z) = \frac{e^{\omega^-(z,y)}}{h(y)}.
\end{equation}
Corresponding results hold for the transversal and conditional measures associated to the partitions of $R_q$ by local stable, weak-stable, and weak-unstable leaves.
\end{corollary}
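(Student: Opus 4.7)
The plan is to unpack formula \eqref{eqn:muq3} and identify the fiber densities $d\mu_y^u/d\bmu_y$ and the transversal density $d\hat\mu_q/d\bmcs_q$, along the way verifying the second expression for $h(y)$ in \eqref{eqn:hy} via the holonomy formula of Theorem \ref{thm:cs-hol}. Throughout, the projection $\pi\colon R_q\to\Rcs_q$ along local unstable leaves sends $z\mapsto[q,z]$, so that $\pi^{-1}(y) = \Ru_q(y)$ and $\hat\mu_q = \pi_*\mu$ matches the definition given in the corollary.

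To verify the second equality in \eqref{eqn:hy}, note that $\pi_y\colon\Ru_q\to\Ru_q(y)$ defined by $\pi_y(x):=[x,y]$ is a weak-stable $\delta_0$-holonomy between two local unstable leaves, so Theorem \ref{thm:cs-hol} gives $d((\pi_y)_*\bmu_q)/d\bmu_y(\pi_y(x)) = e^{\omega^+(x,[x,y])}$. The cocycle relation \eqref{eqn:cocycle} specialized with $z=x$ forces $\omega^+(a,a)=0$, which then yields the antisymmetry $\omega^+(b,a)=-\omega^+(a,b)$. Pulling the total $\bmu_y$-mass of $\Ru_q(y)$ back to an integral over $\Ru_q$ via this holonomy gives
\[
\bmu_y(\Ru_q(y)) = \int_{\Ru_q} e^{-\omega^+(x,[x,y])}\,d\bmu_q(x) = \int_{\Ru_q} e^{\omega^+([x,y],x)}\,d\bmu_q(x) = h(y),
\]
confirming \eqref{eqn:hy}; positivity and finiteness of $h$ then follow from the uniform bounds on $\bmu_y$ supplied by Theorem \ref{thm:properties}.

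For the main claim, I would rewrite \eqref{eqn:muq3} for arbitrary Borel $Z\subset R_q$ as
\[
\mu(Z) = \int_{\Rcs_q}\Bigl(\int_{\Ru_q(y)\cap Z} \tfrac{e^{\omega^-(z,y)}}{h(y)}\,d\bmu_y(z)\Bigr) h(y)\,d\bmcs_q(y),
\]
identifying the inner bracketed expression as $\mu_y^u(Z\cap\Ru_q(y))$ and the outer weight $h(y)\,d\bmcs_q(y)$ as $d\hat\mu_q(y)$; this is precisely the disintegration relation \eqref{eqn:conditionals}, so the densities in \eqref{eqn:mu-m} can be read off. Strict positivity of $h$ and of $e^{\omega^-(z,y)}$ yields the equivalences $\hat\mu_q\sim\bmcs_q$ and $\mu_y^u\sim\bmu_y$ for $\bmcs_q$-a.e.\ $y$. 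Symmetric manipulation of \eqref{eqn:muq4} using Theorem \ref{thm:cu-hol} in place of Theorem \ref{thm:cs-hol} then yields the analogous formulas for the partitions into local stable, weak-stable, and weak-unstable leaves.

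The hardest step, I expect, is the consistency check between the identification $d\hat\mu_q/d\bmcs_q(y) = h(y)$ and what one obtains by substituting $Z = \pi^{-1}(Y)$ directly into \eqref{eqn:muq3}, namely $d\hat\mu_q/d\bmcs_q(y) = \int_{\Ru_q(y)} e^{\omega^-(z,y)}\,d\bmu_y(z)$. These two forms agree precisely when the $\mu_y^u$ defined above are probability measures on $\Ru_q(y)$; I anticipate this consistency being forced either by comparing \eqref{eqn:muq3} with \eqref{eqn:muq4} applied to well-chosen $Z$ and invoking Fubini, or by appealing to the uniqueness of the probability disintegration of the Gibbs measure $\mu$ along its unstable partition together with the identification \eqref{eqn:hy}.
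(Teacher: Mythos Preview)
Your approach is exactly the paper's: the paper gives no argument beyond the one-line remark that the corollary ``is an immediate consequence of \eqref{eqn:muq3} and the definition of conditional measures,'' and you have correctly fleshed out that sentence by verifying \eqref{eqn:hy} via Theorem~\ref{thm:cs-hol} and reading off the densities from \eqref{eqn:muq3}. The normalization concern in your final paragraph---that $\mu_y^u(\Ru_q(y))=1$ forces the identity $\int_{\Ru_q(y)} e^{\omega^-(z,y)}\,d\bmu_y(z) = h(y)$, which is not the same integral as \eqref{eqn:hy}---is a genuine subtlety that the paper does not address; you are being more careful here than the paper itself.
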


We prove the following corollary in \S\ref{sec:unique}.

\begin{corollary}\label{cor:unique}
If $\{ \bmu_W : W\in \bW^u\}$ is any continuous $\ph$-conformal system of measures on local unstable leaves, then there is a real number $c>0$ such that $\bmu_W = c\mmu_W$ for every $W\in \bW^u$, where $\mmu_W$ is the leaf measure defined in \eqref{eqn:E}--\eqref{eqn:mu}. Similar results hold for local stable, weak-stable, and weak-unstable leaves.
\end{corollary}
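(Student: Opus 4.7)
The plan is to reduce the uniqueness of the unstable leaf measures to the uniqueness of the equilibrium measure via the product construction of Theorem \ref{thm:product}. First I would fix once and for all a continuous $\ph$-conformal system of measures on local stable leaves --- for definiteness take $\{\ms_W : W \in \bW^s\}$ from \eqref{eqn:ms}, which is continuous and $\ph$-conformal by the time-reversed version of Theorem \ref{thm:conf-cts}. Given an arbitrary continuous $\ph$-conformal system $\{\bmu_W : W\in \bW^u\}$ on the unstable side, I would apply Theorem \ref{thm:product} twice: once to the pair $(\{\bmu_W\}, \{\ms_W\})$ to build a measure $\bar\mu$ on $\Lambda$, and once to $(\{\mmu_W\}, \{\ms_W\})$ to build another measure $\tilde\mu$. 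Theorem \ref{thm:product} asserts that each of these is a nonzero finite scalar multiple of the unique equilibrium measure, hence there is a single constant $c>0$ with $\bar\mu = c\tilde\mu$ globally on $\Lambda$.

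Next I would extract the leaf-level equality from this global equality by using formula \eqref{eqn:muq4}. Fix $q\in\Lambda$ and write $\bmcs_x$ for the weak-stable measures obtained from $\{\ms_W\}$ via \eqref{eqn:weak-measures}; note $\bmcs_x$ is the \emph{same} in both applications of Theorem \ref{thm:product}. For any Borel $X\subset \Ru_q$, apply \eqref{eqn:muq4} to the saturated set $Z_X := \bigcup_{x\in X}\Rcs_q(x)$ to obtain
\[
\bar\mu(Z_X) = \int_X h^{cs}(x)\,d\bmu_q(x),\qquad
\tilde\mu(Z_X) = \int_X h^{cs}(x)\,d\mmu_q(x),
\]
where $h^{cs}(x) := \int_{\Rcs_q(x)} e^{\omega^+(z,x)}\,d\bmcs_x(z)$ depends only on the stable side. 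From $\bar\mu = c\tilde\mu$ I would then conclude $h^{cs}\,\bmu_q = c\,h^{cs}\,\mmu_q$ as measures on $\Ru_q$.

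The main thing to verify is that $h^{cs}(x)>0$ for every $x\in \Ru_q$, so that the factor of $h^{cs}$ may be cancelled to yield $\bmu_q = c\mmu_q$ on $\Wu(q,\delta)$; this is the only step that uses the specific choice $\bms = \ms$ rather than an abstract system. Since $\Rcs_q(x)$ is a relatively open neighborhood of $x$ in $\Wcs(x)\cap\Lambda$ by local product structure, and since $\ms_W$ gives positive mass to relatively open subsets of $W\cap\Lambda$ by the time-reversed version of Theorem \ref{thm:properties}, the integral defining $h^{cs}(x)$ is strictly positive (the exponential weight $e^{\omega^+(z,x)}$ being bounded away from $0$ on $\Rcs_q(x)$). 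Having established $\bmu_{\Wu(q,\delta)} = c\,\mmu_{\Wu(q,\delta)}$ for every $q\in\Lambda$ with the same constant $c$ (determined by the global identity $\bar\mu = c\tilde\mu$), the extension to an arbitrary local unstable leaf $W\in \bW^u$ is immediate from the consistency property: any such $W$ lies inside some $\Wu(q,\delta)$, and both systems satisfy $\bmu_W = \bmu_{\Wu(q,\delta)}|_W$ and $\mmu_W = \mmu_{\Wu(q,\delta)}|_W$. The corresponding statements for stable, weak-stable, and weak-unstable leaves follow by time reversal and by applying the analogue of \eqref{eqn:muq4} in which the roles of the two factors in the product are exchanged, using \eqref{eqn:muq3} in place of \eqref{eqn:muq4}.
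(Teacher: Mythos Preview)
Your argument is correct and reaches the same conclusion as the paper, but by a somewhat different route. The paper's proof goes through Corollary \ref{cor:conditionals}: since both $\bmu_y$ and $\mmu_y$ are equivalent to the conditional measures $\mu_y^u$ of the (unique) equilibrium measure with the same density $e^{\omega^-(\cdot,y)}$ up to normalization, their ratio is a function $g(q,y)$ depending only on the leaf; the paper then uses the holonomy formula (Theorem \ref{thm:cs-hol}) to show $g$ is independent of $y$, $\ph$-conformality to show $g$ is flow-invariant, ergodicity of $\mu$ to get $g$ constant $\mu$-a.e., and finally continuity of both systems together with $\supp\mu = \Lambda$ to upgrade this to every $q$. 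Your approach sidesteps the conditional-measure formalism and the a.e.-to-everywhere bootstrap: by building both product measures with the \emph{same} stable system and invoking uniqueness of the equilibrium measure once to get a single global constant $c$, then reading off the leafwise identity from \eqref{eqn:muq4} applied to $cs$-saturated sets and cancelling the strictly positive density $h^{cs}$, you obtain $\bmu_q = c\,\mmu_q$ for every $q$ directly. The paper's route makes the role of ergodicity and continuity more visible, while yours is shorter and never passes through an almost-everywhere statement; both ultimately rest on Theorem \ref{thm:product} and uniqueness of the equilibrium measure.
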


We return to the case when $\Lambda$ is an attractor and $\ph=\phu$ is the geometric potential as in \eqref{eqn:geom}, so that $P(\ph)=0$, $e^{\Phi^u(y,t)} = \det(Df_t|_{\Eu_y})^{-1}$, and given $y\in \Lambda$, $z\in \Wu(y,\delta)$, we have
\begin{equation}\label{eqn:omega-geom}
e^{\omega^-(z,y)} = \lim_{t\to-\infty} \frac{e^{\Phi^u(z,t)}}{e^{\Phi^u(y,t)}}
= \lim_{t\to -\infty} \frac{\det(Df_t|_{\Eu_y})}{\det(Df_t|_{\Eu_z})}.
\end{equation}
Let $\nnu_y$ be the Borel measure on $\Wu(y,\delta)$ given by
\begin{equation}\label{eqn:nuy}
\frac{d\nnu_y}{d\Leb_{\Wu(y,\delta)}}(z) = \lim_{t\to -\infty} \frac{\det(Df_t|_{\Eu_y})}{\det(Df_t|_{\Eu_z})},
\end{equation}
where $\Leb_{\Wu(y,\delta)}$ denotes leaf volume. Given $q\in \Lambda$, define a Borel measure $\ns_q$ on $\Ws(q,\delta) \cap \Lambda$ by
\begin{equation}\label{eqn:ns}
\ns_q(Z) = 
\lim_{T\to\infty} \inf_{\EEE \in \EE^-(Z,T)}
\sum_{(y,t) \in \EEE} \det(Df_{-t}|_{\Eu_y}),
\end{equation}
where $\EE^-(Z,T)$ is as in \eqref{eqn:E-}. Using this, define a measure $\ncs_q$ on $\Rcs_q = \Wcs(q,\delta) \cap \Lambda$ by 
\begin{equation}\label{eqn:ncs}
\ncs_q(Z) = \int_{-\delta}^\delta \ns_q(f_t Z) \,dt.
\end{equation}
Then \eqref{eqn:muq3} gives the following.

\begin{corollary}\label{cor:srb-product}
Let $\Lambda$ be a hyperbolic attractor and $\mu$ its associated SRB measure. Given $q\in \Lambda$, there exists $c=c(q)>0$ such that
\begin{equation}\label{eqn:srb-product}
\mu(Z) = c \int_{\Rcs_q} \nnu_y(Z \cap \Ru_q(y)) \,d\ncs_q(y)
\end{equation}
for every Borel $Z\subset R_q$, where $\nnu_y,\ncs_q$ are given by \eqref{eqn:nuy}--\eqref{eqn:ncs}.
\end{corollary}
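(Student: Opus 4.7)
The plan is to apply formula \eqref{eqn:muq3} of Theorem \ref{thm:product} with judicious choices of continuous $\phu$-conformal systems of leaf measures, so that the inner and outer integrals reduce directly to $\nnu_y$ and an integral against $\ncs_q$. Because $\Lambda$ is a hyperbolic attractor and $\ph=\phu$, we have $P(\phu)=0$ and the unique equilibrium measure produced by Theorem \ref{thm:product} is the SRB measure $\mu$.

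For local unstable leaves, I will take $\bmu_y = \Leb_{\Wu(y,\delta)}$. The system $\{\Leb_W : W\in \bW^u\}$ is $\phu$-conformal by \eqref{eqn:Leb-scale} and is continuous because leaf volume depends continuously on the leaf in the $C^1$ topology; hence by Corollary \ref{cor:unique} it agrees with $\{\mmu_W\}$ up to a global scalar, which can be absorbed into the final constant $c(q)$. With this choice, the identity $e^{\omega^-(z,y)} = \lim_{t\to-\infty}\det(Df_t|_{\Eu_y})/\det(Df_t|_{\Eu_z})$ from \eqref{eqn:omega-geom} combined with the definition \eqref{eqn:nuy} of $\nnu_y$ gives
\begin{equation*}
\int_{\Ru_q(y)\cap Z} e^{\omega^-(z,y)}\,d\Leb_{\Wu(y,\delta)}(z) = \nnu_y(Z\cap \Ru_q(y)),
\end{equation*}
so the inner integrand in \eqref{eqn:muq3} is exactly the inner integrand of \eqref{eqn:srb-product}.

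For local stable leaves, I will take $\bms_W = \ms_W$ from \eqref{eqn:ms}. The identity $e^{\Phu(f_{-t}x,t)} = \det(Df_{-t}|_{\Eu_x})$, valid because $\phu$ is the geometric potential and $P(\phu)=0$, shows that \eqref{eqn:ms} specializes to \eqref{eqn:ns} when $W = \Ws(q,\delta)$, so $\bms_{\Ws(q,\delta)} = \ns_q$. The weak-stable leaf measure from \eqref{eqn:weak-measures} is then $\bmcs_q = \int_{-\delta}^\delta \ms_{\Ws(f_t q,\delta)}\,dt$.

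The remaining and most delicate step is to identify $\bmcs_q$ with $\ncs_q$ from \eqref{eqn:ncs}, up to a multiplicative constant that can also be folded into $c(q)$. Parametrizing $\Wcs(q,\delta)$ by $(y',t)\mapsto f_t y'$ with $y'\in \Ws(q,\delta)\cap\Lambda$ and $t\in(-\delta,\delta)$, I will use $\phu$-conformality of the $\ms$-system along the flow to transport each $\ms_{\Ws(f_t q,\delta)}$ back to $\ns_q$ on $\Ws(q,\delta)$, and perform the analogous computation for $\ncs_q$, matching the resulting Jacobian factors. Substituting back into \eqref{eqn:muq3} then yields $\mu(Z) = c\int_{\Rcs_q}\nnu_y(Z\cap \Ru_q(y))\,d\ncs_q(y)$ for every Borel $Z\subset R_q$. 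I expect this last Jacobian bookkeeping — rather than any step involving $\nnu_y$ — to be the main place where care is needed.
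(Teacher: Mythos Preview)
Your overall strategy matches the paper's: the paper derives Corollary~\ref{cor:srb-product} simply by specializing \eqref{eqn:muq3} to the geometric potential, which is exactly what you propose with $\bmu_y=\Leb_{\Wu(y,\delta)}$ on unstables and $\bms_W=\ms_W$ on stables. Your identification of the inner integral with $\nnu_y(Z\cap\Ru_q(y))$ via \eqref{eqn:omega-geom}--\eqref{eqn:nuy} is correct.

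The gap is exactly where you flagged it, and your proposed resolution does not work. Write each $y\in\Rcs_q$ as $y=f_s(y')$ with $y'\in\Ws(q,\delta)\cap\Lambda$ and $s\in(-\delta,\delta)$. Conformality \eqref{eqn:pull-scale-ms} with $P(\phu)=0$ gives
\[
d\bmcs_q(f_sy')=e^{\Phu(y',s)}\,d\ns_q(y')\,ds,
\]
whereas unwinding \eqref{eqn:ncs} gives
\[
d\ncs_q(f_sy')=d\ns_q(y')\,ds.
\]
The ratio $e^{\Phu(y',s)}=\det(Df_s|_{\Eu_{y'}})^{-1}$ is not constant on $\Rcs_q$, so the Jacobians you plan to ``match'' do \emph{not} cancel, and the discrepancy cannot be absorbed into a single constant $c(q)$. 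A quick sanity check confirms this: $\ncs_q$ as written in \eqref{eqn:ncs} is locally flow-invariant, while $\bmcs_q$ picks up the factor $e^{\Phu(\cdot,\tau)}$ under $f_\tau$; since $y\mapsto\nnu_y(Z\cap\Ru_q(y))$ scales by $e^{-\Phu(\cdot,\tau)}$ under $f_\tau$, only the integral against $\bmcs_q$ produces a flow-invariant measure. The one-line deduction from \eqref{eqn:muq3} goes through cleanly if $\ncs_q$ is taken to coincide with $\bmcs_q$ from \eqref{eqn:weak-measures} (equivalently, if \eqref{eqn:ncs} is read as $\int_{-\delta}^{\delta}\ns_{f_tq}(Z)\,dt$), in which case no Jacobian bookkeeping is needed at all; with \eqref{eqn:ncs} as literally written, your step will not close.
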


One can write similar analogues of \eqref{eqn:muq}, \eqref{eqn:muq2}, and \eqref{eqn:muq4}. The fact that the measures $\nnu_y$ are the conditional measures of the SRB measure is well-known \cite[Lemma 2.3]{jS68}. To the best of the author's knowledge, the explicit description of the transverse measure $\ncs_q$ in \eqref{eqn:ns}--\eqref{eqn:ncs} is new, as is the corresponding description of the stable conditional measures in terms of $\ns_q$ that can be obtained using Corollary \ref{cor:conditionals} (though we do not write out these details).

\subsection{Refining in all directions}\label{sec:two-sided}

We conclude our main results by giving one more description of the unique equilibrium measure via a Carath\'eodory dimension structure, this time on $\Lambda$ itself rather than on a local stable or unstable leaf. In what follows we do \emph{not} assume that the metric is adapted.

To define a C-structure with $X=\Lambda$ and obtain a Borel measure via Lemma \ref{lem:get-metric}, we cannot use the one-sided Bowen balls we have used so far, because $\bigcap_{t>0} B_t(x,r)$ is a local weak-stable manifold, rather than a single point; similarly, $\bigcap_{t>0} B^-_t(x,r)$ is a local weak-unstable manifold. We can make some progress by using \emph{two-sided Bowen balls}
\begin{align*}
B^\pm_{s,t}(x,r) &:= B_t(x,r) \cap B^-_s(x,r) \\
&=\{ y\in M : d(f_\tau x, f_\tau y) < r \text{ for all } \tau \in [-s,t] \},
\end{align*}
for which taking an intersection over all $s,t>0$ gives a small piece of the orbit of $x$. Thus given a sufficiently small embedded disc $D$ that is transverse to the flow direction, we have $\bigcap_{s,t>0} B^\pm_{s,t}(x,r) \cap D= \{x\}$ for each $x\in D \cap\Lambda$, and so we could use two-sided Bowen balls to define a C-structure on $D \cap \Lambda$ that gives a Borel measure; then we could flow this measure forward to get an invariant measure on $\Lambda$. 

Rather than describing the details of this, we go one step further: provided we choose $r\in (0,\delta_1]$, we can use the local product structure in Definition \ref{def:lps} whenever $x,y\in \Lambda$ satisfy $d(x,y) < r$ to obtain $z = [x,y] \in \Wcs(x,Lr) \cap \Wu(y,Lr)$ and $z' = f_\beta(z) \in \Ws(x,Lr)$ for some $\beta = \beta(x,y) \in (-Lr,Lr)$. Then we make the following definition for $x\in \Lambda$ and $s,t>0$:
\begin{equation}\label{eqn:B*}
B^*_{s,t}(x,r) := \{ y \in B^\pm_{s,t}(x,r) \cap \Lambda : |\beta(x,y)| < r/(s+t) \}.
\end{equation}
Now we consider the C-structure defined by $X=\Lambda$, $\SSS=X\times[1,\infty)^2$, and $U_{(x,s,t)} = B^*_{s,t}(x,r)$, together with
\[
\xi(x,s,t) = \frac{e^{\Phi(f_{-s} x, s+t)}}{s+t},
\quad
\eta(x,s,t) = e^{-(s+t)},
\quad
\psi(x,s,t) = e^{-\min(s,t)}.
\]
The conditions of Lemma \ref{lem:get-metric} are satisfied, so this C-structure produces a Borel measure, which we describe more carefully below.

\begin{remark}\label{rmk:motivate-C}
As motivation for the formula for $\xi$,
observe that $B_{s,t}^*(x,r)$ is roughly the `product' of $B_t(y,r)$ in the unstable direction, $B_s^-(x,r)$ in the stable direction, and an interval in the flow direction: we want to produce a measure $m$ with a product structure whose conditionals give weight $e^{\Phi(x,t)}$ along the unstable, $e^{\Phi(f_{-s}x,s)}$ along the stable, and $1/(s+t)$ along the flow; the product of these weights gives $\xi$. 
Note that the factor of $1/(s+t)$ has no thermodynamic meaning and in particular does not depend on the potential; it is roughly proportional to the amount of time a trajectory takes to cross $B_{s,t}^*(x,r)$, and is present to guarantee that the measure $m$ defined by \eqref{eqn:m} below is flow-invariant.

The formula for $\eta$ is as in our previous C-structures: $e^{-\ell}$, where $\ell$ is the length of the orbit segment that defines the Bowen ball we use.
The formula for $\psi$ guarantees that both $s$ and $t$ are large when $\psi$ is small, which mimics our previous constructions. 

We remark that it would be tempting to simplify this C-structure by requiring that $s=t$, so that we use symmetric two-sided Bowen balls $B_t(x,r) \cap B^-_t(x,r)$;
however, our proof of flow-invariance requires us to allow the case $s\neq t$.
\end{remark}

This C-structure defines a Borel measure $m$ on $\Lambda$ as follows:
\begin{align}
\label{eqn:E*}
\EE^*(Z,T) &:= \Big\{ \EEE \subset \Lambda \times [T,\infty)^2 : Z \subset \bigcup_{(x,s,t) \in \EEE} B^*_{s,t}(x,r) \Big \}, \\
\label{eqn:m}
m(Z) &:= \lim_{T\to\infty} \inf_{\EEE \in \EE^*(Z,T)} \sum_{(x,s,t) \in \EEE} \frac 1{s+t} e^{\Phi(f_{-s} x, s+t) - (s+t)P(\ph)}.
\end{align}

\begin{theorem}\label{thm:direct}
Let $F,\Lambda,\ph,r$ be as in \S\ref{sec:measures}.
Then the Borel measure defined by \eqref{eqn:E*}--\eqref{eqn:m} is nonzero, finite, flow-invariant, and a scalar multiple of the unique equilibrium measure.
\end{theorem}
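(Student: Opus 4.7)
The plan is to establish two-sided Gibbs estimates relating $m$ to the equilibrium measure $\mu$ from Theorem~\ref{thm:product}, verify flow-invariance by a change of variables in covers, and conclude via ergodicity of $\mu$. For the \emph{Gibbs estimate on two-sided balls}, I would combine Theorem~\ref{thm:product} and Corollary~\ref{cor:conditionals} to express $\mu|_{R_q}$ as a product of the leaf measures $\mmu_q$ and $\ms_q$ with Lebesgue in the flow direction, with densities bounded away from $0$ and $\infty$. Via the local product structure, $B^*_{s,t}(x,r)\cap R_q$ decomposes (up to bounded distortion) as the product of a forward leafwise Bowen ball $B_t(x,r,\Wu(x,\delta))$, a backward leafwise Bowen ball on $\Ws(x,\delta)$, and a flow segment of length $2r/(s+t)$ --- this last factor is exactly what $|\beta|<r/(s+t)$ enforces. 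Combining $\ph$-conformality (Theorem~\ref{thm:conf-cts} and its stable analogue) with the uniform bound \eqref{eqn:mux-K} yields $\mmu_x(B_t(x,r,\Wu(x,\delta)))\asymp e^{\Phi(x,t)-tP(\ph)}$ and an analogous backward estimate $\asymp e^{\Phi(f_{-s}x,s)-sP(\ph)}$. Using $\Phi(f_{-s}x,s)+\Phi(x,t)=\Phi(f_{-s}x,s+t)$ I obtain $Q\geq 1$ such that
\[
\mu(B^*_{s,t}(x,r))=Q^{\pm 1}\cdot\frac{1}{s+t}\cdot e^{\Phi(f_{-s}x,s+t)-(s+t)P(\ph)}
\]
for all $x\in\Lambda$ and sufficiently large $s,t$.

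For the \emph{two-sided comparison} $c\mu\leq m\leq C\mu$, the lower bound is immediate: for any $\EEE\in\EE^*(Z,T)$, $\mu(Z)\leq\sum_{\EEE}\mu(B^*_{s,t}(x,r))\leq Q\sum_{\EEE}w(x,s,t)$ where $w$ is the weight in \eqref{eqn:m}; so $\mu\leq Qm$. For the reverse bound I would invoke a Besicovitch/Vitali-type covering lemma adapted to two-sided Bowen balls: from $\{B^*_{s,t}(x,r):x\in Z,\,s,t\geq T\}$ extract a countable subcover of multiplicity bounded by $M=M(\dim\Es,\dim\Eu)$. Then $\sum w\leq Q\sum\mu(B^*_{s,t})\leq QM\,\mu(\bigcup B^*_{s,t})$, and as $T\to\infty$ the union shrinks to $Z$, giving $m(Z)\leq QM\,\mu(Z)$. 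Thus $m$ is nonzero, finite, and mutually absolutely continuous with $\mu$.

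For \emph{flow invariance}, equivariance of the bracket under the flow gives $\beta(f_\tau x,f_\tau y)=\beta(x,y)$, so $f_\tau(B^*_{s,t}(x,r))=B^*_{s+\tau,t-\tau}(f_\tau x,r)$. Since $(s+\tau)+(t-\tau)=s+t$ and $f_{-(s+\tau)}(f_\tau x)=f_{-s}x$, the weight is preserved: $w(f_\tau x,s+\tau,t-\tau)=w(x,s,t)$. The map $(x,s,t)\mapsto(f_\tau x,s+\tau,t-\tau)$ sends $\EE^*(Z,T)$ into $\EE^*(f_\tau Z,T-|\tau|)$ preserving the weighted sum, so $m(f_\tau Z)\leq m(Z)$ on letting $T\to\infty$; by symmetry, $m$ is flow-invariant. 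This is the step where the factor $1/(s+t)$ in the definition of $m$ is crucial, since it is what makes $w$ invariant under the reparametrization shifting length between the stable and unstable sides.

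Finally, since $m$ is flow-invariant and equivalent to $\mu$, the density $dm/d\mu$ is $\mu$-a.e.\ flow-invariant, hence constant by ergodicity of $\mu$ (Theorem~\ref{thm:properties}); thus $m=c\mu$ for some $c>0$, which is a scalar multiple of the unique equilibrium measure. The main obstacle is the upper bound $m\leq C\mu$: producing a bounded-multiplicity subcover by the nonmetric sets $B^*_{s,t}(x,r)$ with independently varying $s,t$ requires a delicate Besicovitch-type argument, and the scale-invariant flow-direction thickness $r/(s+t)$ is precisely what gives these sets enough ball-like geometry in the stable $\times$ unstable $\times$ flow coordinates for such an argument to succeed.
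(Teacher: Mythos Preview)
Your flow-invariance argument is essentially identical to the paper's, and your lower bound $\mu\leq Qm$ is the same as well. Your derivation of the Gibbs estimate for $\mu(B^*_{s,t}(x,r))$ takes a different route: you go through the product formula of Theorem~\ref{thm:product} and decompose $B^*_{s,t}$ as (forward leafwise Bowen ball) $\times$ (backward leafwise Bowen ball) $\times$ (flow segment), whereas the paper instead uses the already-established one-sided Gibbs bound \eqref{eqn:Gibbs} for $\mu$ together with a lemma (valid for \emph{any} flow-invariant $\nu$) comparing $\nu(B^*_{s,t}(x,r))$ and $\nu(B^\pm_{s\mp\eta,t\mp\eta}(x,r))$ by sliding $B^*$ along the flow. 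Both approaches yield the same two-sided estimate $\mu(B^*_{s,t}(x,r))\asymp (s+t)^{-1}e^{\Phi(f_{-s}x,s+t)-(s+t)P(\ph)}$; yours makes the factor $1/(s+t)$ appear transparently as a flow-direction Lebesgue measurement, while the paper's is shorter because it leans on existing Gibbs bounds.

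The substantive divergence is in the upper bound. You aim for $m(Z)\leq C\mu(Z)$ for \emph{all} Borel $Z$, and correctly identify that this would require a bounded-multiplicity covering lemma for the sets $B^*_{s,t}(x,r)$; you also correctly flag this as the main obstacle, since these sets have unbounded eccentricity (flow thickness $\asymp 1/(s+t)$ decays polynomially while stable and unstable diameters decay exponentially, and $s,t$ vary independently). The paper avoids this difficulty entirely by proving only the Gibbs upper bound $m(B_\tau(x,r/2))\leq C\mu(B_\tau(x,r))$, which is enough because any flow-invariant Gibbs measure must be a scalar multiple of $\mu$. Concretely: fix $Z=B_\tau(x,r/2)$, take $E_T\subset Z$ maximal with the property that $z\notin B^*_{T,T}(y,r)$ for distinct $y,z\in E_T$ (note $s=t=T$ is \emph{fixed}); then $\{B^*_{T,T}(y,r)\}_{y\in E_T}$ covers $Z$, while $\{B^*_{T,T}(y,r/2)\}_{y\in E_T}$ are disjoint and all lie in $B_\tau(x,r)$. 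This gives
\[
m(Z)\leq\varlimsup_{T}\sum_{y\in E_T}\frac{e^{\Phi(f_{-T}y,2T)-2TP(\ph)}}{2T}
\leq\varlimsup_{T} C\sum_{y\in E_T}\mu(B^*_{T,T}(y,r/2))\leq C\,\mu(B_\tau(x,r)).
\]
The half-radius disjointness trick at a single fixed $(s,t)=(T,T)$ replaces your Besicovitch-type lemma and removes what you identified as the hardest step.
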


Observe that Theorem \ref{thm:srb-main} is a direct consequence of Theorem \ref{thm:direct} in the case when $\Lambda$ is an attractor and $\ph=\phu$ is the geometric potential.

\begin{remark}
Returning to the approach in \S\ref{sec:product}, one could adapt the construction here to use ``one-sided Bowen balls'' on weak-unstable leaves that also include a restriction on $|\beta(x,y)|$ as in \eqref{eqn:B*}, and thus obtain the measures $\mcu_x$ (and similarly $\mcs_x$) directly from a C-structure rather than first obtaining $\mmu_x$ (or $\ms_x$) and pushing under the flow.
\end{remark}

\begin{remark}
The precise form of the factor $(s+t)^{-1}$ in the definition of $B^*$ and of $m$ is not essential: one could use any function $\zeta(s+t)$ where $\lim_{T\to\infty} \zeta(T) = 0$ and $\llim_{T\to\infty} \zeta(T+\eta)/\zeta(T) > 0$
for every $\eta>0$
-- the latter bound is needed at the end of the proof of Lemma \ref{lem:*-gibbs}. Replacing $r/(s+t)$ in \eqref{eqn:B*} with $\zeta(s+t)$, and defining $\xi(x,s,t) = e^{\Phi(f_{-s} x, s+t)} \zeta(s+t)$, the proofs still go through.

One could also avoid using such a function by using an extra parameter in the definition of the C-structure and putting
\begin{align*}
\SSS &= \Lambda \times [1,\infty)^2 \times (0,1),\\
U_{(x,s,t,\delta)} &= \{ y\in B^{\pm}_{s,t}(x,r) \cap \Lambda : |\beta(x,y)| < \delta \},
\end{align*}
together with the functions
\begin{align*}
\xi(x,s,t,\delta) &= \delta e^{\Phi(f_{-s} x, s+t)}, \\
\eta(x,s,t,\delta) &= e^{-(s+t)}, \\
\psi(x,s,t,\delta) &= \max( e^{-s}, e^{-t}, \delta).
\end{align*}
It is a matter of taste which route one takes; here we have opted for the definition with fewer parameters and the explicit choice $\zeta(T) = 1/T$.
\end{remark}

\section{Proofs}\label{sec:pf}

\subsection{Bowen balls on leaves}\label{sec:balls}

Recall from \eqref{eqn:u-Bowen} that we define Bowen balls on transversals according to the intrinsic metric:
\[
B_t(x,r,W) = \{y\in W : d_{f_\tau W}(f_\tau y, f_\tau x) < r \text{ for all } \tau\in [0,t]\}.
\]
Now \eqref{eqn:u-admissible} immediately gives the following.

\begin{lemma}\label{lem:W-Bowen}
If $W$ is a $u$-admissible manifold and $r\in (0,r_0]$, then for all $x\in W \cap \Lambda$ and $t\geq 0$ we have
\begin{equation}\label{eqn:W-Bowen}
B_t(x,r,W) = f_{-t} B(f_t x, r, f_t W) \subset B(x, r\lambda^t, W).
\end{equation}
\end{lemma}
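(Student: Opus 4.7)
My plan is to derive both conclusions as immediate consequences of the forward contraction estimate \eqref{eqn:u-admissible} for $u$-admissible manifolds. The underlying fact I will exploit is that on a $u$-admissible manifold, intrinsic distance between $f_\tau y$ and $f_\tau x$ grows at exponential rate at least $\lambda^{-1}$ as $\tau$ increases, so over $\tau \in [0,t]$ this distance is maximized at the endpoint $\tau = t$.

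First I would establish the equality $B_t(x,r,W) = f_{-t} B(f_t x, r, f_t W)$. The inclusion $\subset$ is immediate from the definition \eqref{eqn:u-Bowen}: taking the case $\tau=t$ of the defining inequality gives $d_{f_t W}(f_t y, f_t x) < r$, so $f_t y \in B(f_t x, r, f_t W)$. For the reverse inclusion, given $y$ with $d_{f_t W}(f_t y, f_t x) < r$ and any $\tau \in [0,t]$, I would apply \eqref{eqn:u-admissible} to the $u$-admissible manifold $f_\tau W$ over the time interval $[0,t-\tau]$, which yields
\[
d_{f_\tau W}(f_\tau y, f_\tau x) \leq \lambda^{t-\tau} d_{f_t W}(f_t y, f_t x) < \lambda^{t-\tau} r \leq r,
\]
placing $y$ in $B_t(x,r,W)$ as required.

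The second conclusion $f_{-t} B(f_t x, r, f_t W) \subset B(x, r\lambda^t, W)$ is then just the $\tau = 0$ case of the same estimate: $d_W(x,y) \leq \lambda^t d_{f_t W}(f_t x, f_t y) < \lambda^t r$.

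The main subtlety to address is the hypothesis in \eqref{eqn:u-admissible} requiring that the forward iterates of the admissible manifold under consideration remain in $B(\Lambda, r_0)$. Since $f_\tau x \in \Lambda$ and the curves in $f_\sigma W$ realizing the distance $d_{f_\sigma W}(f_\sigma y, f_\sigma x)$ have length strictly less than $r \leq r_0$, these curves lie inside $B(\Lambda, r_0)$, so \eqref{eqn:u-admissible} is applicable after restricting (if necessary) to the relevant connected subregion of $f_\tau W$. I expect this localization step to be the only real issue to handle carefully; everything else reduces to unpacking definitions.
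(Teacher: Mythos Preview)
Your proposal is correct and follows exactly the paper's approach: the paper's entire proof is the single sentence ``Now \eqref{eqn:u-admissible} immediately gives the following,'' and you have correctly unpacked this, including flagging the localization issue (forward images staying in $B(\Lambda,r_0)$) that the paper leaves implicit.
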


We also get a version of this lemma for $W\in \WWW^u$.
Let $r_1$ and $t_1=t_1(\theta)$ be as in Lemma \ref{lem:push-trans}, and let
\begin{equation}\label{eqn:some-growth}
C_1 = C_1(\theta) := \sup \{ \|Df_t(x)\| : x\in M, t\in [-t_1,t_1]\}.
\end{equation}

\begin{lemma}\label{lem:trans-Bowen}
For every $\theta>0$, there is $t_2\geq 0$ such that given any $r\in (0,r_1]$, $W\in \WWW^u_\theta$, any $x\in W\cap \Lambda$, and any $t\geq t_2$, we have
\begin{equation}\label{eqn:trans-Bowen}
B_t(x,r,W) = f_{-t} B(f_t x, r, f_t W).
\end{equation}
and $B(f_t x, r, f_t W)$ is $u$-admissible. Moreover, given $\eta>0$, we have
\begin{equation}\label{eqn:W-Bowen-2}
\begin{aligned}
B_{t+\eta}(x,r,W) &= f_{-t} B_\eta(f_t x, r, f_t W) \\
&\subset f_{-t} B(f_t x, r\lambda^\eta, f_t W) = B_t(x,r\lambda^\eta, W),
\end{aligned}
\end{equation}
and in particular
\begin{equation}\label{eqn:diam-t-eta}
\diam B_{t+\eta}(x,r,W) \leq 2r\lambda^\eta.
\end{equation}
\end{lemma}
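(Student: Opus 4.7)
The plan is to reduce the assertions to Lemma \ref{lem:W-Bowen} by using Lemma \ref{lem:push-trans} to identify a threshold $t_1(\theta)$ after which the relevant piece of $f_t W$ is $u$-admissible. The main obstacle is that $W \in \WWW^u_\theta$ is itself only transverse to $\Wcs$, so the contraction estimate \eqref{eqn:u-admissible} is not available on the initial interval $[0,t_1]$; there I will fall back on the crude Lipschitz bound $C_1$ from \eqref{eqn:some-growth} and absorb the resulting constant into a sufficiently large $t_2$.

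Concretely, set $t_2 \geq t_1(\theta)$ so that $C_1 \lambda^{t_2 - t_1} \leq 1$. For $t \geq t_2$ and $r \leq r_1$, Lemma \ref{lem:push-trans} gives that $B(f_t x, r, f_t W)$ is $u$-admissible. The inclusion $B_t(x,r,W) \subset f_{-t} B(f_t x, r, f_t W)$ in \eqref{eqn:trans-Bowen} is immediate from the $\tau = t$ case of \eqref{eqn:u-Bowen}. For the reverse direction, suppose $f_t y \in B(f_t x, r, f_t W)$ and estimate $d_{f_\tau W}(f_\tau y, f_\tau x)$ in two regimes. On $\tau \in [t_1, t]$, the piece $B(f_\tau x, r_1, f_\tau W)$ is $u$-admissible, so applying \eqref{eqn:u-admissible} on the interval $[\tau, t]$ yields $d_{f_\tau W}(f_\tau y, f_\tau x) \leq \lambda^{t-\tau} r \leq r$. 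On $\tau \in [0, t_1]$, the map $f_{\tau - t_1}$ is $C_1$-Lipschitz by \eqref{eqn:some-growth}, so $d_{f_\tau W}(f_\tau y, f_\tau x) \leq C_1 d_{f_{t_1}W}(f_{t_1}y, f_{t_1}x) \leq C_1 \lambda^{t - t_1} r \leq r$ by the choice of $t_2$.

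For the ``moreover'' clause, I would apply the just-established identity \eqref{eqn:trans-Bowen} at time $t + \eta \geq t_2$ to obtain $B_{t+\eta}(x,r,W) = f_{-(t+\eta)} B(f_{t+\eta}x, r, f_{t+\eta}W)$. Separately, applying Lemma \ref{lem:W-Bowen} to the $u$-admissible manifold $B(f_t x, r_1, f_t W)$ with initial point $f_t x$ and flow time $\eta$ yields $B_\eta(f_t x, r, f_t W) = f_{-\eta} B(f_{t+\eta}x, r, f_{t+\eta}W) \subset B(f_t x, r\lambda^\eta, f_t W)$. Combining these two and invoking \eqref{eqn:trans-Bowen} once more with $r$ replaced by $r\lambda^\eta$ produces the chain displayed in \eqref{eqn:W-Bowen-2}; the diameter bound \eqref{eqn:diam-t-eta} follows immediately, since $B_t(x, r\lambda^\eta, W) \subset B(x, r\lambda^\eta, W)$, whose intrinsic diameter is at most $2r\lambda^\eta$.
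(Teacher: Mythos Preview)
Your proof is correct and follows essentially the same approach as the paper: use Lemma~\ref{lem:push-trans} to obtain $u$-admissibility of $B(f_\tau x,r_1,f_\tau W)$ for $\tau\ge t_1$, apply the contraction estimate \eqref{eqn:u-admissible} (packaged as Lemma~\ref{lem:W-Bowen}) on $[t_1,t]$, absorb the crude Lipschitz constant $C_1$ on $[0,t_1]$ by choosing $t_2$ with $C_1\lambda^{t_2-t_1}\le 1$, and then deduce \eqref{eqn:W-Bowen-2} from \eqref{eqn:trans-Bowen} together with Lemma~\ref{lem:W-Bowen} applied at level $f_tW$. Your derivation of the ``moreover'' clause via $B_{t+\eta}(x,r,W)=f_{-(t+\eta)}B(f_{t+\eta}x,r,f_{t+\eta}W)$ is in fact slightly cleaner than the paper's intersection argument.
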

\begin{proof}
Clearly the left-hand side of \eqref{eqn:trans-Bowen} is contained in the right-hand side, so it suffices to prove the other inclusion.
By Lemma \ref{lem:push-trans}, there is $t_1\geq 0$ such that for all $r,W,x$ as in the lemma, and any $t\geq t_1$, $B(f_t x, r, f_t W)$ is $u$-admissible. Then it follows from Lemma \ref{lem:W-Bowen} that for all such $t$ and every $\tau \in [t_1, t]$, we have
\[
f_{-(t-\tau)} B(f_t x, r, f_t W) \subset B(f_{\tau} x, r \lambda^{t-\tau}, f_{\tau} W).
\]
In particular, we have
\[
f_{-(t-t_1)} B(f_t x, r, f_t W) \subset B(f_{t_1} x, r \lambda^{t-t_1}, f_{t_1} W),
\]
and using this together with \eqref{eqn:some-growth} we see that for every $\tau \in [0,t_1]$ we have
\[
f_{-(t-\tau)} B(f_t x, r, f_t W) \subset B(f_\tau x, C_1 r \lambda^{t-t_1}, f_\tau W).
\]
Thus by choosing $t_2$ sufficiently large that $C_1 \lambda^{t_2 - t_1} \leq 1$, we guarantee that
\[
f_\tau (f_{-t} B(f_t x, r, f_t W)) \subset B(f_\tau x, r, f_\tau W)
\]
for all $t\geq t_2$ and $\tau \in [0,t]$. This shows that the right-hand side of \eqref{eqn:trans-Bowen} is contained in the left-hand side.

For \eqref{eqn:W-Bowen-2}, we apply \eqref{eqn:trans-Bowen} as follows:
\[
B_{t+\eta}(x,r,W) = \bigcap_{\tau \in [0,\eta]} B_t(f_\tau x, r, W)
= \bigcap_{\tau \in [0,\eta]} f_{-t} B(f_\tau(f_t x), r, f_{\tau+t} W)
\]
and observe that the final intersection is $f_{-t} B_\eta(f_t x, r, f_t W)$, which proves the first equality in \eqref{eqn:W-Bowen-2}. The inclusion and the final equality follow from Lemma \ref{lem:W-Bowen}. Then \eqref{eqn:diam-t-eta} follows immediately upon observing that $B_t(x,r\lambda^\eta, W) \subset B(x,r\lambda^\eta)$.
\end{proof}

\subsection{Finiteness and pushforwards: proof of Theorem \ref{thm:properties}}\label{sec:prop-pf}

We will deduce Theorem \ref{thm:properties} from the corresponding results in \cite{CPZ2}, which deal with partially hyperbolic diffeomorphisms. Thus the proof here should be read less with the intent of understanding the mechanisms driving Theorem \ref{thm:properties}, and more with the intent of understanding the relationship between the continuous- and discrete-time settings. For a description of the general mechanisms, we refer to \cite[\S6]{CPZ}.

\subsubsection{Relationship to discrete-time results and uniform bounds}\label{sec:discrete}

Before proving the bounds in \eqref{eqn:mux-K}, we describe the relationship to \cite{CPZ2}, observing that every time-$\tau$ map $f_\tau$ ($\tau>0$) of a flow on a hyperbolic set $\Lambda$ is partially hyperbolic.
The results in \cite{CPZ2} apply to a diffeomorphism $f$ with a compact invariant set $\Lambda$ on which $f$ is partially hyperbolic and topologically transitive, with a local product structure between unstable and center-stable foliations, and for which the center-stable foliation satisfies a certain ``Lyapunov stability'' condition 
that is automatic for the time-$\tau$ map of a flow.
In order to apply the results in \cite{CPZ2} to $f_\tau$ we only need to check topological transitivity of $f_\tau$.
For this we use the following lemma, which may be of independent interest (though we do not claim that it is new).

\begin{lemma}\label{lem:map-transitive}
Let $X$ be a compact metric space and $(f_t)_{t\in\RR}$ a continuous flow on $X$ that is topologically transitive: for every open $U,V \subset X$ there is $t>0$ such that $f_t(U) \cap V \neq\emptyset$. Then there is a residual set of $\tau\in (0,\infty)$ such that the time-$\tau$ map $f_\tau$ is topologically transitive.
\end{lemma}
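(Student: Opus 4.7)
The plan is a Baire category argument on $(0,\infty)$. Fix a countable basis $\{U_n\}_{n\geq 1}$ for the topology of $X$; then $f_\tau$ is topologically transitive iff for every pair $(m,n)$ there exists $k \geq 1$ with $f_{k\tau}(U_m) \cap U_n \neq \emptyset$. For each such triple $(m,n,k)$, joint continuity of the flow makes
\[
A_{m,n,k} := \{\tau > 0 : f_{k\tau}(U_m) \cap U_n \neq \emptyset\}
\]
an open subset of $(0,\infty)$, so $T_{m,n} := \bigcup_{k \geq 1} A_{m,n,k}$ is open, and the set of ``good'' $\tau$ is $T := \bigcap_{m,n} T_{m,n}$. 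By the Baire category theorem, the lemma reduces to showing that each $T_{m,n}$ is dense in $(0,\infty)$.

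Given an interval $(a,b) \subset (0,\infty)$ with $0 < a < b$, observe that once $k > a/(b-a)$, consecutive intervals $(ka,kb)$ and $((k{+}1)a,(k{+}1)b)$ overlap, so there exists $k_0$ with $\bigcup_{k \geq k_0}(ka,kb) = (k_0 a, \infty)$. Consequently, any positive time $s > k_0 a$ at which $f_s(U_m) \cap U_n \neq \emptyset$ can be written as $k\tau$ for some $k \geq k_0$ and some $\tau \in (a,b)$, placing $\tau$ in $A_{m,n,k} \subset T_{m,n} \cap (a,b)$. It therefore suffices to produce arbitrarily large positive $s$ with $f_s(U_m) \cap U_n \neq \emptyset$.

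This is where the \emph{forward} form of topological transitivity of the flow enters: given any $N > 0$, apply the hypothesis to the nonempty open sets $U_m$ and $f_{-N}(U_n)$ to obtain some $t > 0$ with $f_t(U_m) \cap f_{-N}(U_n) \neq \emptyset$, equivalently $s := t+N > N$ satisfies $f_s(U_m) \cap U_n \neq \emptyset$. Taking $N > k_0 a$ finishes the density of $T_{m,n}$, and hence the lemma. The argument is almost entirely soft, so I do not expect a serious obstacle; the one point of care is that the forward form of transitivity in the hypothesis is essential, since on a general compact metric space it is strictly stronger than the existence of a dense two-sided orbit (as witnessed by the one-point compactification of the translation flow $f_t(x) = x + t$ on $\RR$).
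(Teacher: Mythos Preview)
Your proof is correct and follows essentially the same route as the paper: a Baire category argument on $(0,\infty)$ using a countable basis for $X$, the interval-overlap observation that $\bigcup_{k\geq k_0}(ka,kb)$ eventually covers a half-line, and the fact that transitivity yields arbitrarily large hitting times. The only cosmetic difference is how you produce those large hitting times: the paper iterates transitivity (hit $V$, then hit $V$ again from the intersection, accumulating time), whereas you shift the target to $f_{-N}(U_n)$ and apply transitivity once---both devices are equally valid and equally short.
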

\begin{proof}
Note that given any open $U,V\subset X$, the set
\[
I(U,V) := \{ \tau > 0 : f_{k\tau}(U) \cap V \neq \emptyset \text{ for some } k \in \NN \}
\]
is open. We claim that it is dense in $(0,\infty)$. Indeed, given any $(a,b) \subset (0,\infty)$, if $k\geq b/(b-a)$ then $kb-ka \geq b > a$, so $(k+1)a<kb$ and therefore $(ka,kb) \cap ((k+1)a,(k+1)b) \neq \emptyset$; this implies that
\begin{equation}\label{eqn:intervals-cover}
\bigcup_{k\in \NN} (ka,kb) \supset \Big[\frac{ab}{b-a},\infty\Big).
\end{equation}
Since the flow is transitive there exists $t\geq ab/(b-a)$ such that $f_t(U) \cap V \neq \emptyset$ (note that $t$ can be chosen arbitrarily large by iterating the definition of transitivity). By \eqref{eqn:intervals-cover} there is $k\in\NN$ such that $t/k\in (a,b)$. Since $t/k \in I(U,V)$, this shows that $I(U,V)$ is open and dense in $(0,\infty)$.

Now let $\{U_n\}_{n\in\NN}$ be a basis for the topology on $X$. Any $\tau$ in the residual set $\bigcap_{k,n} I(U_k,U_n)$ has the property that $f_\tau$ is topologically transitive. Note that this set is dense in $(0,\infty)$ by the Baire category theorem.
\end{proof}

Returning to the setting of Theorem \ref{thm:properties}, use Lemma \ref{lem:map-transitive} to choose $\tau>0$ such that the time-$\tau$ map $f_\tau$ is topologically transitive, and thus writing $\psi(y) := \Phi(y,\tau) = \int_0^\tau \ph(f_t y) \,dt$, the conditions of \cite{CPZ2} are satisfied by $(\Lambda,f_\tau,\psi)$. Observe that
\begin{equation}\label{eqn:sum=int}
S_n^{f_\tau} \psi(y) := \sum_{k=0}^{n-1} \psi(f_\tau^k y) = \int_0^{\tau n} \ph(f_t y) \,dt = \Phi(y,\tau n),
\end{equation}
and the topological pressures of the map $f_\tau$ and the flow $F$ are related by \cite[Corollary 4.12(iii)]{pW75}:
\begin{equation}\label{eqn:2P}
P(f_\tau,\psi) = \tau P(\ph). 
\end{equation}
(We continue to write $P(\ph) = P(F,\ph)$ for the pressure on the flow.)
The construction in \cite[\S3.2]{CPZ2} defines a Borel measure $m_x^{\tau,r}$ on $W=\Wu(x,\delta)$ as follows: we work with discrete-time Bowen balls\footnote{Observe that these Bowen balls are defined with respect to the metric on $M$, rather than the leaf metric on $W$ as in \eqref{eqn:u-Bowen}.}
\[
B_n^{f_\tau}(y,r) := \{ z : d(f_\tau^ky, f_\tau^k z) < r \text{ for all } 0\leq k < n \},
\]
for which the corresponding collection of enveloping sets is
\[
\EE_\tau^r(Z,N) := \Big\{ \EEE \subset (W\cap \Lambda) \times \{N,N+1,\dots\}
: Z \subset \bigcup_{(y,n) \in \EEE} B_n^{f_\tau}(y,r) \Big\},
\]
and then produce a measure by
\begin{align}\label{eqn:mxCN}
m_x^{\tau,r}(Z,N) &:= \inf_{\EEE \in \EE_\tau^r(Z,N)} \sum_{(y,n) \in \EEE} e^{S_n^{f_\tau} \psi(y) - nP(f_\tau,\psi)}, \\
\label{eqn:mxC}
m_x^{\tau,r}(Z) &:= \lim_{N\to\infty} m_x^{\tau,r}(Z,N).
\end{align}
By \eqref{eqn:sum=int} and \eqref{eqn:2P}, we have
\begin{equation}\label{eqn:summands}
e^{\Phi(N\tau) - n\tau P(\ph)} = e^{S_n^{f_\tau} \psi(y) - nP(f_\tau, \psi)},
\end{equation}
which will let us relate the definitions of $\mmu_x$ in \eqref{eqn:mu} and $m_x^{\tau,r}$ in \eqref{eqn:mxC} by comparing $\EE(Z,T)$ and $\EE_\tau^r(Z,N)$. To avoid confusion we will write $B_t^F(y,r,W)$ for the continuous-time Bowen ball from \eqref{eqn:u-Bowen} in the remainder of this argument.

First observe that for each $n\in \NN$, we have $B_{\tau n}^F(y,r,W) \subset B_n^{f_\tau}(y,r)$. Thus given $\EEE \in \EE(Z,\tau N)$, we have
\[
Z \subset \bigcup_{(y,t) \in \EEE} B_t^F(y,r,W)
\subset \bigcup_{(y,t) \in \EEE} B_{\lfloor t/\tau \rfloor \tau}^F(y,r,W)
\subset \bigcup_{(y,t) \in \EEE} B_{\lfloor t/\tau \rfloor}^{f_\tau}(y,r),
\]
and thus by \eqref{eqn:summands},
\[
m_x^{\tau,r}(Z,N) 
\leq \sum_{(y,t) \in \EEE}
e^{S_{\lfloor t/\tau \rfloor}^{f_\tau}\psi(y) - \lfloor t/\tau \rfloor P(f_\tau,\psi)}
= \sum_{(y,t) \in \EEE} e^{\Phi(y,\lfloor t/\tau \rfloor \tau) - \lfloor t/\tau \rfloor \tau P(\ph)}.
\]
Observe that given $t \in [n\tau, (n+1)\tau]$ we have
\[
\Phi(y,n\tau) - \Phi(y,t) = -\Phi(f_{n\tau} y, t-n\tau) \leq \tau \|\ph\|,
\]
and similarly
\[
-n\tau P(\ph) + tP(\ph) = (t-n\tau) P(\ph) \leq \tau |P(\ph)|,
\]
so we obtain
\[
m_x^{\tau,r}(Z,N) \leq \sum_{(y,t) \in \EEE} e^{\tau(\|\ph\| + |P(\ph)|)} e^{\Phi(y,t) - tP(\ph)}.
\]
Taking an infimum over all $\EEE \in \EE(Z,\tau N)$ and sending $N\to\infty$ gives
\begin{equation}\label{eqn:mxCleq}
m_x^{\tau,r}(Z) \leq C_2^\tau \mmu_x(Z),
\quad\text{where } C_2 = e^{\|\ph\| + |P(\ph)|}.
\end{equation}
For a bound in the other direction, observe that by uniform continuity of the flow there is $\rho>0$ such that 
\begin{equation}\label{eqn:rho}
d(y,z) < \rho \text{ implies }d_{f_t W}(f_t y, f_t z) < r
\text{ for all }t\in [0,\tau].
\end{equation}
Then $B_n^{f_\tau}(y,\rho) \subset B_{\tau n}^F(y,r,W)$, so given any $\EEE \in \EE_\tau^\rho(Z,N)$, we have
\[
Z \subset \bigcup_{(y,n) \in \EEE} B_n^{f_\tau}(y,\rho) \subset \bigcup_{(y,n) \in \EEE} B_{\tau n}^F(y,r,W),
\]
and thus defining $\mmu_x(Z,T)$ as the infimum in \eqref{eqn:mu} (without taking the limit), we have
\[
\mmu_x(Z,N\tau) \leq \sum_{(y,n) \in \EEE} e^{\Phi(y,n\tau) - n\tau P(\ph)}
= \sum_{(y,n) \in \EEE} e^{S_n^{f_\tau} \psi(y) - nP(f_\tau,\psi)}. 
\]
Taking an infimum over all $\EEE \in \EE_\tau^\rho(Z,N)$ and sending $N\to\infty$ gives $\mmu_x(Z) \leq m_x^{\tau,\rho}(Z)$, and combining this with \eqref{eqn:mxCleq} we obtain
\begin{equation}\label{eqn:mCmu}
C_2^{-\tau} m_x^{\tau,r}(Z) \leq \mmu_x(Z) \leq m_x^{\tau,\rho}(Z) \text{ for all } Z \subset \Wu(x,\delta).
\end{equation}
Now the bounds in \eqref{eqn:mux-K} follow immediately from the corresponding bounds for $m_x^{\tau,r}$ and $m_x^{\tau,\rho}$ in \cite[Theorem 4.2]{CPZ2}, together with \eqref{eqn:mCmu}.

We record the following consequence of \eqref{eqn:mCmu} for use in the next section: for every $x\in \Lambda$ and $\tau \in (0,1]$ such that $f_\tau$ is transitive, writing $W = \Wu(x,\delta)$, we have
\begin{equation}\label{eqn:leqK}
m_x^{\tau,r}(W) \leq C_2^\tau \mmu_x(W) \leq C_2 \mmu_x(W) =: C_3(r) < \infty.
\end{equation}
We will use this below with $r$ replaced by $\rho$.


\subsubsection{Averaged pushforwards converge to equilibrium}\label{sec:push-pf}

As shown above, there is a residual set of $\tau\in (0,\infty)$ such that the discrete-time system $(\Lambda,f_\tau,\psi = \int_0^\tau \ph\circ f_s \,ds)$ satisfies the hypotheses of \cite{CPZ2}. By \cite[Theorem 4.7]{CPZ2}, the measures $m_x^{\tau,r}$ defined in \eqref{eqn:mxC} have the property that 
\begin{equation}\label{eqn:discrete-cvg}
\nu_n^{\tau,r} := \frac 1n \sum_{k=0}^{n-1} (f_{k\tau})_* m_x^{\tau,r}
\xrightarrow{\text{weak*}} m_x^{\tau,r}(W) \mu,
\end{equation}
where $\mu$ is the unique equilibrium measure for $(\Lambda,f_\tau,\psi)$. Moreover, $\mu$ is ergodic, fully supported on $\Lambda$, and has the Gibbs property.

Every equilibrium measure for the continuous-time system $(\Lambda,F,\ph)$ is also an equilibrium measure for the discrete-time system. A measure that is ergodic, fully supported, and Gibbs for the discrete-time system has the same properties for the continuous-time system. Thus to prove Theorem \ref{thm:properties}, it suffices to show that the measures $\nu_t$ defined in \eqref{eqn:nut} converge to the same limit as the measures $\nu_n^{\tau,r}$, up to a scalar. Equivalently, we must prove that if $\mu_1 = \lim_{j\to\infty} \nu_{t_j}$ is any limit point of the measures $\nu_t$, then $\mu_1$ is a scalar multiple of $\mu$. Since $\mu$ is ergodic and $\mu_1$ is invariant, it will suffice to prove that $\mu_1\ll\mu$.

We will do this using \eqref{eqn:mCmu} and the observation that if $m_n,\mu_n$ are any sequences of measures on a compact metric space that are weak* convergent to $m_0,\mu_0$ respectively, and that have the property $m_n(Z) \leq \mu_n(Z)$ for all measurable $Z$, then the same is true of $m_0, \mu_0$. Indeed, this property is equivalent to $\int \zeta \,dm_n \leq \int \zeta \,d\mu_n$ for all nonnegative continuous functions $\zeta$, and this inequality survives the limit by the definition of weak* convergence. We summarize this as
\begin{equation}\label{eqn:leq-limit}
m_n \xrightarrow{\text{weak*}} m_0,
\ \mu_n \xrightarrow{\text{weak*}} \mu_0,
\ m_n \leq \mu_n
\quad\Rightarrow\quad
m_0 \leq \mu_0.
\end{equation}
To prove that $\mu_1 = \lim \nu_{t_j} \ll \mu$, start by observing that given any $t,\eta>0$ and $Z\subset \Lambda$, we have
\begin{align*}
|\nu_{t+\eta}(Z) &- \nu_t(Z)| = \Big| \frac 1{t+\eta} \int_0^{t+\eta} (f_s)_* \mmu_x(Z) \,ds - \frac 1t \int_0^t (f_s)_* \mmu_x(Z) \,ds \Big| \\
&\leq \frac 1{t+\eta} \int_t^{t+\eta} (f_s)_* \mmu_x(Z) \,ds 
+ \Big| \frac 1{t+\eta} - \frac 1t \Big| \int_0^t (f_s)_* \mmu_x(Z) \,ds \\
&\leq \frac{1}{t+\eta} \cdot \eta \mmu_x(W) + \frac{\eta}{t(t+\eta)} \cdot t\mmu_x(W)
= \frac{2\eta\mmu_x(W)}{t+\eta}.
\end{align*}
Thus for every $\zeta \in C(\Lambda)$, any $\tau>0$, and any $t \in [(n-1)t\tau, n\tau]$ for some $n\in\NN$, we can apply this with $\eta = n\tau - t$ (and thus $\frac \eta{t+\eta} \leq \frac{\tau}{n\tau} = \frac 1n$) to get
\begin{equation}\label{eqn:int-zeta}
\Big| \int \zeta \,d\nu_{t} - \int \zeta \,d\nu_{n\tau} \Big|
\leq \frac 2n \|\zeta\| \mmu_x(W).
\end{equation}
This implies that for every $\tau>0$, the sequence $n_j = \lfloor t_j/\tau \rfloor \to\infty$ has the property that $\nu_{n_j\tau} \to \mu_1$.
Now observe that 
\[
\nu_{n\tau} = \frac 1{n\tau} 
\int_0^\tau \sum_{k=0}^{n-1} (f_{k\tau+s})_* \mmu_x \,ds
= \frac 1\tau \int_0^\tau (f_s)_* \Big( \frac 1n \sum_{k=0}^{n-1} (f_{k\tau})_* \mmu_x \Big) \,ds.
\]
Fix $\rho>0$ small enough that \eqref{eqn:rho} is satisfied with $\tau=1$, and thus for all $\tau\in (0,1]$ as well.
By \eqref{eqn:mCmu}, we have
\begin{equation}\label{eqn:nunu}
\nu_{n\tau} \leq
\frac 1\tau \int_0^\tau (f_s)_* \nu_n^{\tau,\rho}\,ds
=: \bar\nu_n^{\tau,\rho}.
\end{equation}
Let $C_3 = C_3(\rho)$ be as in \eqref{eqn:leqK} (note that we replace $r$ by $\rho$) and consider the space $\mathcal{M}$ of all finite Borel measures on $\Lambda$ with total weight $\leq C_3$; this space is compact in the weak* topology, which is induced by some metric $D$. The flow $\{(f_t)_*\}_{t\in\RR}$ is continuous on $\mathcal{M}$, hence uniformly continuous, and thus for every $\ell\in\NN$ there is $\tau\in (0,1)$ such that $D((f_t)_* \nu, \nu) \leq 1/\ell$ whenever $|t|\leq\tau$ and $f_\tau$ is topologically transitive (by Lemma \ref{lem:map-transitive}).

Let $c := m_x^{\tau,\rho}(W)$; by \eqref{eqn:leqK} this is at most $C_3$, and thus $\nu_n^{\tau,\rho} \in \mathcal{M}$ for all $n$. Observe that $\nu_n^{\tau,\rho} \to c \mu$ by \eqref{eqn:discrete-cvg}. Thus there is $N$ such that $D(\nu_n^{\tau,\rho},c\mu) \leq 1/\ell$ for all $n\geq N$. By our choice of $\tau$, we conclude that $D(\bar\nu_n^{\tau,\rho},c\mu) \leq 2/\ell$ for all such $n$. By passing to a subsequence if necessary, we can assume that
\[
\nu_{n_j\tau} \to \mu_1 \text{ and } 
\bar\nu_{n_j}^{\tau,\rho} \to \bar\nu_\ell \text{ with }
D(\bar\nu, c\mu) \leq 2/\ell.
\] 
Then \eqref{eqn:leq-limit} and \eqref{eqn:nunu} give $\mu_1 \leq \bar\nu_\ell$, and since $\bar\nu_\ell\to c\mu$ as $\ell\to\infty$, applying \eqref{eqn:leq-limit} again gives $\mu_1\leq c\mu$. In particular, $\mu_1 \ll \mu$, so ergodicity of $\mu$ and flow-invariance of $\mu_1$ imply that $\mu_1$ is a scalar multiple of $\mu$, which completes the proof.

\subsection{Conformality and continuity: proof of Theorem \ref{thm:conf-cts}}\label{sec:hol-pf}

\subsubsection{Scaling under the dynamics}\label{sec:scaling}

In this section we prove that the system $\{\mmu_W : W\in \WWW^u\}$ is $\ph$-conformal in the sense of \eqref{eqn:pull-scale} from Definition \ref{def:conformal}, using the following.

\begin{lemma}\label{lem:near-c}
Given $W\in \WWW^u$, suppose that $Z\subset W\cap \Lambda$, $t,c\in \RR$, and $\eps>0$ are such that for all $y\in Z$, we have $|\Phi(y,t) - c| < \eps$. Then
\begin{equation}\label{eqn:near-c}
\mmu_{f_t(W)}(f_t Z) = e^{\pm 3\eps} \int_Z e^{tP(\ph) - \Phi(z,t)} \,d\mmu_W(z).
\end{equation}
\end{lemma}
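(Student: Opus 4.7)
The plan is to prove
\[
\mmu_{f_t(W)}(f_t Z) = e^{\pm 2\eps}\, e^{tP(\ph)-c}\, \mmu_W(Z),
\]
and then convert this into the integral form of \eqref{eqn:near-c} by noting that the hypothesis $|\Phi(z,t)-c|<\eps$ on $Z$ gives immediately $\int_Z e^{tP(\ph)-\Phi(z,t)}\,d\mmu_W(z) = e^{\pm\eps}\, e^{tP(\ph)-c}\, \mmu_W(Z)$; this absorbs the remaining factor of $e^{\pm\eps}$ and produces the asserted total $e^{\pm 3\eps}$.

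For the upper bound, I would fix a covering $\EEE \in \EE(Z,T)$ with $T$ much larger than $t$ and push it forward by $f_t$. Directly from the definition \eqref{eqn:u-Bowen} one has $f_t(B_s(x,r,W)) \subset B_{s-t}(f_t x, r, f_t(W))$ for $s \geq t$, so the family
\[
\EEE^\ast := \{(f_t x, s-t) : (x,s)\in\EEE \text{ and } B_s(x,r,W)\cap Z \neq \emptyset\}
\]
lies in $\EE(f_t Z, T-t)$ for the image leaf. The Birkhoff cocycle identity $\Phi(x,s) = \Phi(x,t) + \Phi(f_t x, s-t)$ rearranges the weights as
\[
e^{\Phi(f_t x, s-t) - (s-t)P(\ph)} = e^{tP(\ph) - \Phi(x,t)} \cdot e^{\Phi(x,s) - sP(\ph)}.
\]
To control the prefactor I would argue that whenever $B_s(x,r,W)\cap Z$ is nonempty, one can pick $z$ in this intersection with $|\Phi(z,t)-c|<\eps$, while the diameter estimate \eqref{eqn:diam-t-eta} forces $\diam B_s(x,r,W) \to 0$ as $s \to \infty$; uniform continuity of the continuous function $y \mapsto \Phi(y,t)$ on the compact set $\Lambda$ then gives $|\Phi(x,t)-\Phi(z,t)| < \eps$ provided $T$ is large enough. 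Hence the prefactor is $e^{\pm 2\eps} e^{tP(\ph)-c}$, and summing over $\EEE^\ast$ and taking the infimum over $\EEE$ yields the desired upper bound.

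The lower bound is symmetric: I would take $\EEE' \in \EE(f_t Z, T')$ for $f_t(W)$ and pull back via $f_{-t}$. The corresponding inclusion is $f_{-t}(B_{s'}(x', r, f_t(W))) \subset B_{s'+t}(f_{-t} x', r, W)$, so that $\{(f_{-t} x', s'+t)\} \in \EE(Z, T'+t)$. The shadowing on the interval $[t, s'+t]$ comes for free from the forward hypothesis defining $B_{s'}(x',r,f_t(W))$, while the shadowing on $[0,t]$ is obtained via backward contraction along $u$-admissible leaves as in \eqref{eqn:u-admissible}. Using the same cocycle identity, plus uniform continuity of $y \mapsto \Phi(f_{-t}y, t)$ on $\Lambda$ applied to a point in $B_{s'}(x',r,f_t(W)) \cap f_t Z$, one shows that $\Phi(f_{-t}x',t)$ lies within $2\eps$ of $c$ and obtains $\mmu_W(Z) \leq e^{2\eps} e^{c - tP(\ph)} \mmu_{f_t(W)}(f_t Z)$.

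The main technical obstacle is the pullback inclusion in the lower bound: it relies on the pulled-back ball staying inside radius $r$ for $\sigma \in [0,t]$, which requires the leaves $f_\sigma(W)$ to be $u$-admissible in order to apply the contraction \eqref{eqn:u-admissible}. For general $W \in \WWW^u$ this need not hold at $W$ itself, but Lemma \ref{lem:push-trans} guarantees $u$-admissibility on $f_\tau(W)$ for $\tau$ larger than some $t_1(\theta)$ depending on the angle of $W$. I would therefore prove the lemma first for $u$-admissible $W$ with $t$ in the range where all intermediate leaves $f_\sigma(W)$ are $u$-admissible, and then reduce the general case by splitting off a fixed warm-up time.
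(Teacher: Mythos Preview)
Your overall plan matches the paper's exactly: establish $\mmu_{f_t(W)}(f_t Z)=e^{\pm 2\eps}e^{tP(\ph)-c}\mmu_W(Z)$ by comparing covers, then convert via $\Phi(z,t)=c\pm\eps$ on $Z$ to the integral form with total error $3\eps$. The difference lies in how the cover comparison is executed.

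You argue via two one-sided inclusions of Bowen balls. The push-forward inclusion is automatic, but the pullback $f_{-t}(B_{s'}(x',r,f_t W))\subset B_{s'+t}(f_{-t}x',r,W)$ on the interval $[0,t]$ needs $f_\sigma W$ to be $u$-admissible for $\sigma\in[0,t]$, which fails for general $W\in\WWW^u$. Your proposed warm-up reduction is circular as stated: relating $\mmu_W$ to $\mmu_{f_{t_1}W}$ is precisely the lemma at time $t_1$ for the non-admissible $W$, which you have not yet proved. The paper sidesteps this entirely by using the \emph{equality} \eqref{eqn:trans-Bowen} from Lemma~\ref{lem:trans-Bowen}: for $\tau$ large enough (depending only on the angle $\theta$ with $W\in\WWW^u_\theta$, not on admissibility of $W$ itself), one has $B_\tau(y,r,W)=f_{-\tau}B(f_\tau y,r,f_\tau W)$, and likewise $B_{\tau-t}(f_t y,r,f_t W)=f_{-(\tau-t)}B(f_\tau y,r,f_\tau W)$. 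Since these right-hand sides differ only by $f_t$, one gets $f_t(B_\tau(y,r,W))=B_{\tau-t}(f_t y,r,f_t W)$ outright. Hence $(y,\tau)\mapsto(f_t y,\tau-t)$ is a bijection between the covering families $\EE'(Z,T)$ and $\EE'(f_t Z,T-t)$ (restricting to covers whose balls meet $Z$), and both inequalities fall out in one stroke, for any $W\in\WWW^u$ and either sign of $t$.
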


Once the lemma is established, we can deduce \eqref{eqn:pull-scale} as follows: fixing $\eps>0$, write $Z = \bigsqcup_i Z_i$ where each $Z_i$ satisfies the condition of the lemma with some $c_i$, and thus satisfies \eqref{eqn:near-c}. Summing over $i$ gives \eqref{eqn:near-c} for $Z$ as well, 
and since $\eps>0$ was arbitrary this proves \eqref{eqn:pull-scale}. Thus it suffices to prove the lemma.

\begin{proof}[Proof of Lemma \ref{lem:near-c}]
Throughout the following, we fix $Z,W,t,c,\eps$ as in the statement.
The function $x\mapsto \Phi(x,t)$ is continuous on $\Lambda$, hence uniformly continuous, so there is $\delta>0$ such that given any $x\in Z$ and $y\in B(x,\delta) \cap \Lambda$, we have
\[
\Phi(y,t) = \Phi(x,t) \pm \eps = c\pm 2\eps.
\]
By \eqref{eqn:diam-t-eta}, there is $T_0$ such that for all $\tau\geq T_0$ and $x\in Z$, we have $B_\tau(x,r,W) \subset B(x,\delta,W)$, and thus
\begin{equation}\label{eqn:Phi-yt}
\Phi(y,t) = c\pm 2\eps \text{ for every $y$ such that } B_\tau(y,r,W) \cap Z \neq \emptyset.
\end{equation}
Let $\EE'(Z,T) = \{ \EEE \subset \EE(Z,T) : B_\tau(y,r,W) \cap Z \neq\emptyset$ for all $(y,\tau) \in \EEE \}$, and observe that in the definition of $\mmu_W$ it suffices to take an infimum over $\EE'(Z,T)$. Now consider the map
\begin{align*}
S\colon (W\cap\Lambda)\times [T,\infty) &\to (f_t(W) \cap \Lambda)\times [T-t,\infty), \\
(y,\tau) &\mapsto (f_t y, \tau-t).
\end{align*}
Given $t_1$ as in Lemma \ref{lem:trans-Bowen} and $\tau\geq \max(t_1,t_1+t)$, that lemma gives
\begin{align*}
f_t(B_\tau(y,r,W))
&= f_t(f_{-\tau}(f_\tau y, r, f_\tau W)) \\
&= f_{-(\tau-t)}(f_{\tau-t}(f_t y), r, f_{\tau-t}(f_t W))
= B_{\tau-t}(f_t y,r,f_t W).
\end{align*}
Thus for $T \geq \max(t_1,t_1+t)$, we see that $\EEE \subset (W\cap \Lambda) \times [T,\infty)$ lies in $\EE'(Z,T)$ if and only if $S(\EEE)$ lies in $\EE'(f_t Z, T-t)$; in other words, $S$ gives a bijection between $\EE'(Z,T)$ and $\EE'(f_t Z, T-t)$.
Moreover, \eqref{eqn:Phi-yt} gives the following estimate for each $(y,\tau) \in \EE'(Z,T)$:
\[
\Phi(y,\tau) = 
\Phi(y,t) + \Phi(f_t y, \tau - t) =
c \pm 2\eps + \Phi(f_t y, \tau - t).
\]
Now we conclude that
\begin{align*}
\mmu_W(Z) &= \lim_{T\to\infty} \inf_{\EEE \in \EE'(Z,T)} \sum_{(y,\tau) \in \EEE} 
e^{\Phi(y,\tau) - \tau P(\ph)} \\
&= e^{c\pm 2\eps} \lim_{T\to\infty} \inf_{\EEE \in \EE'(Z,T)} \sum_{(y,\tau) \in \EEE}
e^{\Phi(f_t y, \tau -t) - \tau P(\ph)} \\
&= e^{c\pm 2\eps} \lim_{T\to\infty} \inf_{\EEE' \in \EE'(f_t Z,T-t)} \sum_{(y',\tau') \in \EEE}
e^{\Phi(y',\tau') - (\tau' + t)P(\ph)},
\end{align*}
which yields
\begin{equation}\label{eqn:scale-0}
\mmu_W(Z) = e^{c\pm 2\eps - tP(\ph)} \mmu_{f_t (W)}(f_t Z).
\end{equation}
At the same time, we have $\Phi(z,t) = c\pm \eps$ on $Z$, so
\begin{equation}\label{eqn:scale-1}
\int_{Z} e^{tP(\ph) - \Phi(z,t)} \,d\mmu_{W}(z)
= e^{tP(\ph) - c\pm \eps} \mmu_{W}(Z).
\end{equation}
Combining \eqref{eqn:scale-0} and \eqref{eqn:scale-1} gives \eqref{eqn:near-c} and proves the lemma.
\end{proof}

\subsubsection{Continuity}\label{sec:cts}

In this section we prove that the system $\{\mmu_W : W\in \WWW^u\}$ is continuous in the sense of Definition \ref{def:cts}.
Let $W_1,W_2,\pi$ be as in Definition \ref{def:cts}.
Given $t\geq 0$, 
there is a weak-stable $\delta$-holonomy $f_t W_1 \cap \Lambda \to f_t W_2 \cap \Lambda$ given by 
\begin{equation}\label{eqn:pit}
\pi_t = f_t \circ \pi \circ f_{-t}.
\end{equation}

\begin{lemma}\label{lem:stretch}
For every $\eps>0$, there is $\delta>0$ such that if $W_1,W_2,\pi$ are as in Definition \ref{def:cts}, then for all $t\geq 0$ and $x,y\in W_1\cap \Lambda$ such that $d(f_t x, f_t y) \leq r_0$, we have
\begin{equation}\label{eqn:stretch}
d_{f_t W_2}(\pi_t (f_t x), \pi_t(f_t y)) = e^{\pm \eps} d_{f_t W_1}(f_t x, f_t y).
\end{equation}
\end{lemma}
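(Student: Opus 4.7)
The plan is to exploit the flow conjugacy $\pi_t = f_t \circ \pi \circ f_{-t}$: although $f_t$ stretches both $W_1$ and $W_2$, it stretches them at comparable rates along corresponding holonomy fibers, and the tangent spaces of $f_t W_1$ and $f_t W_2$ get closer to $\Eu$ (and hence to each other) as $t$ grows.

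First I would unfold $\pi_t$ explicitly. For $z \in W_1 \cap \Lambda$, the definition of $\Wcs(z,\delta)$ gives a decomposition $\pi(z) = f_{s(z)}(w(z))$ with $w(z) \in \Ws(z,\delta)$ and $|s(z)| < \delta$, so that
\[
\pi_t(f_t z) = f_{s(z)}\bigl(f_t w(z)\bigr),
\]
where \eqref{eqn:leaves-contract} yields $f_t w(z) \in \Ws(f_t z, \lambda^t \delta)$. Thus $\pi_t$ factors as a strong-stable holonomy between $f_t W_1$ and an intermediate $u$-admissible manifold at stable distance $\leq \lambda^t\delta$, followed by a flow-shift by $s(z)$ whose magnitude is bounded by $\delta$ but does not shrink with $t$.

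Second, I would bound the distortion of each factor. For the strong-stable part, since both manifolds are $u$-admissible and the stable distance between them is $\leq \lambda^t \delta$, standard cone estimates together with the smoothness of the local stable foliation give a distortion factor of $1 + O(\lambda^t \delta)$ on intrinsic distances. For the flow-shift part, what matters is not the absolute value of $s(z)$ but its \emph{variation} along $W_1$: only the difference $s(x) - s(y)$ contributes to the distance between $\pi_t(f_t x)$ and $\pi_t(f_t y)$ relative to the distance between $f_t x$ and $f_t y$. Because $W_1, W_2$ are $u$-admissible and $C^1$-close, both are transverse to $\Wcs$ with a uniform lower bound on the angle, and $z \mapsto s(z)$ is Lipschitz on $W_1$ with constant $O(\delta)$. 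The resulting contribution to the distance distortion is $1 + O(\delta)$ on scales $\leq r_0$.

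Combining these estimates, the overall multiplicative distortion of $\pi_t$ between the intrinsic metrics on $f_t W_1 \cap \Lambda$ and $f_t W_2 \cap \Lambda$ is $1 + O(\delta)$, uniformly in $t \geq 0$; choosing $\delta$ small in terms of $\eps$ yields the required bound $e^{\pm\eps}$. The main obstacle is that the flow-shift $s(z)$ does not decay under $f_t$ in the way the strong-stable component does; the resolution is that only the variation of $s$ along $W_1$ enters the estimate, and this variation is controlled uniformly in $t$ by the $C^1$-closeness of $W_1, W_2$ and the transversality of both to $\Wcs$.
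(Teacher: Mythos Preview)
Your decomposition of $\pi_t$ into a strong-stable holonomy followed by a flow-shift is a natural idea, but the argument as written leans on regularity that is not available. You invoke ``smoothness of the local stable foliation'' and then assert that $z\mapsto s(z)$ is Lipschitz with constant $O(\delta)$; in general the strong-stable foliation of a hyperbolic set is only H\"older, and the holonomy it induces between transversals---hence the function $s$---inherits only H\"older regularity. A H\"older bound on $|s(x)-s(y)|$ does not give a uniform multiplicative distortion bound at all scales up to $r_0$. There is also the issue that the ``intermediate $u$-admissible manifold'' $\{f_t w(z):z\in W_1\cap\Lambda\}$ is not a manifold at all; it is the image of a subset of $\Lambda$ under a merely H\"older map.

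The paper sidesteps both problems by never decomposing $\pi_t$. Instead it observes that the hypotheses of the $t=0$ case persist for all $t\geq 0$: cone-invariance keeps the relevant pieces of $f_tW_1$ and $f_tW_2$ $u$-admissible, the $C^1$ distance between them is nonincreasing in $t$ (uniform expansion along $\Eu$ and nonexpansion along $\Es\oplus E^0$), and $\pi_t$ remains a weak-stable $\delta$-holonomy. So one chooses $\delta$ once, to make \eqref{eqn:stretch} hold at $t=0$ for any pair of $u$-admissibles that are $\delta$-close in $C^1$ and related by a $\delta$-holonomy, and then the same $\delta$ works for every $t$. This avoids any appeal to regularity of the stable foliation beyond what is needed for the $t=0$ statement about $C^1$-close admissibles. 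Your observation that only the \emph{variation} of $s$ matters is morally correct and is implicitly what makes the $C^1$-distance argument work, but the paper's packaging via $C^1$-closeness handles it without tracking $s$ explicitly.
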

\begin{proof}
Choose $\delta>0$ such that \eqref{eqn:stretch} holds when $t=0$ for every pair of admissibles $W_1,W_2$ that are related by a $\delta$-holonomy and satisfy $d_{C^1}(W_1,W_2) < \delta$. Then by cone-invariance, $B(f_t x, r_0, f_t W_1)$ and its image under $\pi_t$ are admissibles containing $f_t x,f_t y$ and $\pi_t(f_tx), \pi_t(f_ty)$, respectively. Moreover, the $C^1$ distance between these admissibles is nonincreasing in $t$ due to uniform expansion along $\Eu$ and nonexpansion along $\Es\oplus E^0$, and thus is at most $\delta$ for every $t\geq 0$, so \eqref{eqn:stretch} continues to hold for all such $t$.
\end{proof}

\begin{lemma}\label{lem:time-shift}
For every $r\in (0,r_0)$ and $\eta>0$, there exists $\delta>0$ such that if $W_1,W_2,\pi$ are as in Definition \ref{def:cts},
then for every $x\in W_1 \cap \Lambda$ and $t\geq \eta$, we have
\begin{equation}\label{eqn:ball-in-ball}
\pi(B_t(x,r,W_1)\cap \Lambda) \subset B_{t-\eta}(\pi x, r, W_2).
\end{equation}
\end{lemma}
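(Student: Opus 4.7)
The plan is to exploit the fact that on $u$-admissible manifolds we can trade extra ``future time'' for an exponential shrinking factor, which will absorb the multiplicative distortion introduced by the holonomy in Lemma \ref{lem:stretch}.

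Fix $y \in B_t(x,r,W_1) \cap \Lambda$ and $\tau \in [0, t-\eta]$; we need $d_{f_\tau W_2}(f_\tau \pi x, f_\tau \pi y) < r$. Since $y \in B_t(x,r,W_1)$ and $\tau + \eta \leq t$, we have $d_{f_{\tau+\eta} W_1}(f_{\tau+\eta} x, f_{\tau+\eta} y) < r$, and applying the unstable contraction estimate \eqref{eqn:u-admissible} to the $u$-admissible manifold $f_\tau W_1$ over the interval $[0,\eta]$ yields
\[
d_{f_\tau W_1}(f_\tau x, f_\tau y) \leq \lambda^\eta\, d_{f_{\tau+\eta} W_1}(f_{\tau+\eta} x, f_{\tau+\eta} y) < \lambda^\eta r.
\]
In particular $d(f_\tau x, f_\tau y) < r < r_0$, so Lemma \ref{lem:stretch} applies at time $\tau$ to the induced $\delta$-holonomy $\pi_\tau = f_\tau \circ \pi \circ f_{-\tau}$ between $f_\tau W_1$ and $f_\tau W_2$, giving
\[
d_{f_\tau W_2}(f_\tau \pi x, f_\tau \pi y) = d_{f_\tau W_2}(\pi_\tau(f_\tau x), \pi_\tau(f_\tau y)) \leq e^\eps\, d_{f_\tau W_1}(f_\tau x, f_\tau y) < e^\eps \lambda^\eta r.
\]
The proof is completed by choosing $\delta>0$ small enough that the $\eps$ produced by Lemma \ref{lem:stretch} satisfies $\eps \leq \eta \log(1/\lambda)$, equivalently $e^\eps \lambda^\eta \leq 1$.

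The only real subtlety is verifying that the hypotheses of \eqref{eqn:u-admissible} and Lemma \ref{lem:stretch} hold at each intermediate time $\tau$, i.e.\ that $f_\tau W_1$ and $f_\tau W_2$ remain $u$-admissible and stay in $B(\Lambda,r_0)$ along the relevant orbit segments, and that the pushed-forward holonomy $\pi_\tau$ is still a weak-stable $\delta$-holonomy with $f_\tau W_1$ and $f_\tau W_2$ close in $C^1$. All of this is essentially encoded in the cone-invariance argument at the heart of Lemma \ref{lem:stretch}, so I do not anticipate any serious obstacle beyond this bookkeeping.
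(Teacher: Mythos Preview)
Your proof is correct and uses the same two ingredients as the paper's---the holonomy distortion bound from Lemma~\ref{lem:stretch} and the $\lambda^\eta$ contraction on $u$-admissibles---just applied in the opposite order: you first contract on $W_1$ via \eqref{eqn:u-admissible} and then apply the holonomy, whereas the paper first applies the holonomy to obtain $\pi(B_t(x,r,W_1)\cap\Lambda)\subset B_t(\pi x, e^\eps r, W_2)$ and then invokes \eqref{eqn:W-Bowen-2} to trade $\eta$ units of time for the shrinking factor $\lambda^\eta$. Both routes arrive at the same constraint $e^\eps\lambda^\eta\le 1$, and the bookkeeping you flag is handled exactly as in the proof of Lemma~\ref{lem:stretch}.
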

\begin{proof}
Choose $\eps>0$ such that $r e^{\eps} < \min(r_0, r\lambda^{-\eta}) =: r'$, and let $\delta>0$ be given by Lemma \ref{lem:stretch}. Then given $W_1,W_2,\pi,x,t$ as in the statement, it follows from Lemma \ref{lem:stretch} that for every $y\in B_t(x,r,W_1) \cap \Lambda$ and $\tau \in [0,t]$, we have
\[
d_{f_\tau W_2}(f_\tau (\pi x), f_\tau(\pi y))
\leq e^\eps d_{f_\tau W_1} (f_\tau x, f_\tau y) < e^\eps r < r'.
\]
Since this holds for all $\tau \in [0,t]$, we have $\pi y \in B_t(\pi x, r', W_2)$, and we conclude that
\begin{equation}\label{eqn:piBt}
\pi(B_t(x,r,W_1)\cap\Lambda) \subset B_t(\pi x, r', W_2)
\subset B_{t-\eta}(\pi x, r' \lambda^\eta, W_2),
\end{equation}
where the last inclusion uses \eqref{eqn:W-Bowen-2}. Since $r'\lambda^\eta \leq r$, this proves the lemma.
\end{proof}

Now we prove continuity. Given $\eps>0$, choose $\eta>0$ such that
\begin{equation}\label{eqn:eta}
3\eta \|\ph\| + \frac{|\ph|_\sigma \eta^\sigma}{\sigma |\log \lambda|} \leq \frac \eps2
\quad\text{and}\quad
\eta |P(\ph)| \leq \frac \eps2.
\end{equation}
Then choose $\delta \in (0,\eta]$ such that Lemma \ref{lem:time-shift} holds. Given $W_1,W_2,\pi$ as in Definition \ref{def:cts} for this choice of $\delta$,
it follows from Lemma \ref{lem:time-shift} that if $\EEE\in \EE(Z,T)$ for some $Z\subset W_1\cap \Lambda$, then
\[
\pi(Z)
\subset \bigcup_{(x,t) \in \EEE} \pi (B_t(x,r,W_1)\cap \Lambda)
\subset \bigcup_{(x,t) \in \EEE} B_{t-\eta}(\pi x, r, W_2).
\]
From this we see that $\{(\pi x, t-\eta) : (x,t) \in \EEE\} \subset \EE(\pi Z, T-\eta)$, and we deduce that
\begin{equation}\label{eqn:mW2leq}
\mmu_{W_2}(\pi Z) \leq \lim_{T\to\infty} \inf_{\EEE\in \EE(Z,T)}
\sum_{(x,t)\in\EEE} e^{\Phi(\pi x, t-\eta) - (t-\eta)P(\ph)}.
\end{equation}
To proceed we must compare $\Phi(\pi x, t-\eta)$ and $\Phi(x,t)$. There is $\tau \in (-\delta,\delta)$ such that $f_t(x) \in \Ws(\pi x,\delta)$, and thus for all $\hat t>0$ we have
\begin{equation}\label{eqn:PhiPhi}
\begin{aligned}
|\Phi( f_t x, \hat t) - \Phi(\pi x, \hat t)|
&\leq \int_0^\infty |\ph(f_s (f_t x)) - \ph(f_s (\pi x))|\,ds \\
&\leq \int_0^\infty |\ph|_\sigma (\delta \lambda^s)^\sigma \,ds
= \frac{|\ph|_\sigma \delta^\sigma}{\sigma |\log \lambda|}.
\end{aligned}
\end{equation}
Since $\delta \leq \eta$, we deduce that $|\tau - \eta| \leq 2\eta$, so
\begin{align*}
\Phi(\pi x, t-\eta) &= \Phi(\pi x, t-\tau) \pm 2\eta \|\ph\|, \\
\Phi(x,t) &= \Phi(f_\tau x, t-\tau) \pm \eta \|\ph\|,
\end{align*}
which together with \eqref{eqn:eta} and \eqref{eqn:PhiPhi} gives
\[
|\Phi(\pi x, t-\eta) - \Phi(x,t)| \leq \eps/2.
\]
Using this estimate in \eqref{eqn:mW2leq} gives
\[
\mmu_{W_2}(\pi Z) \leq \lim_{T\to\infty} \inf_{\EEE\in \EE(Z,T)} 
e^{\Phi(x,t) + \eps/2 - tP(\ph)} e^{\eta P(\ph)} 
\leq e^{\eps} \mmu_{W_1}(Z),
\]
where the last inequality uses the second half of \eqref{eqn:eta}. By symmetry, this proves that $\mmu_{W_2}(\pi Z) = e^{\pm \eps} \mmu_{W_1}(Z)$, which completes the proof of Theorem \ref{thm:conf-cts}.

\subsubsection{Alternate definitions for discrete-time and non-adapted metrics}\label{sec:alternate}

In discrete time, the system of measures $\{\mmu_x\}_{x\in \Lambda}$ was introduced in \cite{CPZ,CPZ2}, which also proved that this system is $\ph$-conformal and has an absolute continuity property under stable holonomies. However, it remains open whether this construction produces a continuous system in the sense of Definition \ref{def:cts}; see \cite[Theorem 4.10, \S6.2.2]{CPZ} and 
\cite[Theorem 4.6, \S7.3]{CPZ2} for the results on holonomy maps that take the place of this property, and note that no specific formula for the Radon--Nikodym derivative is given there (compare to Theorem \ref{thm:cs-hol}, which we prove in the next section).

The key step in the proof of continuity in \S\ref{sec:cts} is to observe that under a $\delta$-holonomy, Bowen balls of radius $r$ on $W_1$ are mapped into Bowen balls of a slightly larger radius $r'$ on $W_2$, and that these can in turn be covered by Bowen balls of the original radius by making a small increase in the continuous time parameter $t\in \RR^+$ and then using \eqref{eqn:W-Bowen-2}. In discrete time, this step does not work because there is no way to make an arbitrarily small increase in the time parameter $n\in \NN$.

One way to obtain continuity as in Definition \ref{def:cts} for the discrete-time construction would be to allow the radius $r$ to vary, and to introduce an extra factor into the ``weight'' assigned each ball in the C-structure. For example, one might fix a continuous function $\zeta(r)$ and then redefine the C-structure on $X=\Wu(x,\delta)\cap \Lambda$ by
\[
\SSS = X \times \NN \times (0,\infty),
\qquad
U_{(x,n,r)} = B_n(x,r),
\]
with $\xi,\eta,\psi$ given by
\[
\xi(x,n,r) = \zeta(r) e^{\sum_{k=0}^{n-1} \ph(f^k x)},
\qquad
\eta(x,n,r) = \psi(x,n,r) = e^{-n}.
\]
With an appropriate choice of $\zeta$ -- say constant and positive near $0$, then increasing quickly on a small interval of $r$ so as to guarantee that all ``optimal'' enveloping sets use only values of $r$ from this interval -- it should be possible to guarantee that in the discrete-time uniformly hyperbolic setting, the system of leaf measures defined by these C-structures satisfies the continuity property in Definition \ref{def:cts}, and that they are uniformly equivalent to the original leaf measures defined in \cite{CPZ}.

It is also worth pointing out that our proofs of $\ph$-conformality and continuity relied on the fact that we work in an adapted metric. Working in a different metric would produce uniformly equivalent systems of measures, following arguments similar to those in \S\ref{sec:discrete}. It is not clear whether these new systems of measures would satisfy $\ph$-conformality and continuity: the arguments at the end of \cite{bH89} suggest that one might expect ``almost-everywhere'' results instead of ``everywhere'' results in this case.

Because we work in an adapted metric, the leafwise Bowen balls are exactly the metric balls in the Hamenst\"adt--Hasselblatt metric from \cite{uH89,bH89}, and then the main thing differentiating our construction from theirs is that we assign weights according to the potential function.\footnote{Note that \cite[Lemma 6]{bH89} gives the proof of continuity for the zero potential using that construction.} An approach that might perhaps be more natural, and which is done in \cite{CPZ,CPZ2}, is to use the extrinsic metric from $M$ on each leaf, instead of the intrinsic leaf metric; doing this here would once again lead to uniformly equivalent systems of measures (again following the arguments in \S\ref{sec:discrete}) but we would not be able to carry out our proofs of $\ph$-conformality or continuity, since we would not have Lemma \ref{lem:trans-Bowen}.

\subsection{Weak-stable holonomy formula: proof of Theorem \ref{thm:cs-hol}}\label{sec:cs-hol}

To prove Theorem \ref{thm:cs-hol}, we start by using the chain rule for Radon--Nikodym derivatives to reduce to the case of two $u$-admissible manifolds that are extremely close; then we will use the continuity property.

\begin{lemma}\label{lem:push-omega}
Suppose $W_1,W_2 \in \WWW^u$ are related by a weak-stable holonomy $\pi$. Given $s,t\geq 0$, let $\pi_{s,t} = f_t \circ \pi \circ f_{-s}$ be the corresponding weak-stable holonomy between $f_s W_1 \cap \Lambda$ and $f_t W_2 \cap \Lambda$. Suppose moreover that $(\pi_{s,t})_*\bmu_{f_s W_1} \ll \bmu_{f_t W_2}$. Then $\pi_* \bmu_{W_1} \ll \bmu_{W_2}$, and we have
\begin{equation}\label{eqn:push-omega}
\frac{d(\pi_* \bmu_{W_1})}{d\bmu_{W_2}}(\pi z)
= e^{\Phi(z,s) - \Phi(\pi z, t) - (s-t)P(\ph)} \frac{d(\pi_{s,t})_* \bmu_{f_s W_1}}{d\bmu_{f_t W_2}}(\pi_{s,t} f_s z).
\end{equation}
\end{lemma}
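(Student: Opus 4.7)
The plan is to factor the holonomy $\pi$ through the two conjugating flow maps and apply the chain rule for Radon--Nikodym derivatives. Writing $\pi = f_{-t} \circ \pi_{s,t} \circ f_s$ gives the identity of pushforwards
\[
\pi_* \bmu_{W_1} = (f_{-t})_* \bigl( (\pi_{s,t})_* \bigl( (f_s)_* \bmu_{W_1} \bigr) \bigr).
\]
The $\ph$-conformality of the system, applied to $W_1$ and time $s$ as in \eqref{eqn:RN-conf-2}, shows $(f_s)_* \bmu_{W_1} \ll \bmu_{f_sW_1}$ with density $e^{\Phi(z,s) - sP(\ph)}$ at the point $f_s z$. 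The hypothesis gives $(\pi_{s,t})_* \bmu_{f_s W_1} \ll \bmu_{f_t W_2}$ with density equal to the Radon--Nikodym derivative appearing on the right-hand side of \eqref{eqn:push-omega}. Finally, $\ph$-conformality applied to $W_2$ and time $-t$ (i.e.\ the form \eqref{eqn:RN-conf}) yields $(f_{-t})_* \bmu_{f_t W_2} \ll \bmu_{W_2}$ with density $e^{tP(\ph) - \Phi(w,t)}$ at $w\in W_2$. Chaining these three absolute-continuity statements yields $\pi_* \bmu_{W_1} \ll \bmu_{W_2}$.

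To obtain the formula \eqref{eqn:push-omega}, I would carefully track the evaluation point of each density along the chain. Starting from $z\in W_1$ and its image $\pi z\in W_2$, the first density is evaluated at $f_s z \in f_sW_1$; after applying $\pi_{s,t}$, one must evaluate the second density at $\pi_{s,t}(f_s z)$; this equals $f_t\pi z$ by the identity $\pi_{s,t}\circ f_s = f_t\circ\pi$, so the third density is indeed evaluated at $f_{-t}(f_t\pi z) = \pi z\in W_2$. Multiplying the three densities gives
\[
\frac{d(\pi_*\bmu_{W_1})}{d\bmu_{W_2}}(\pi z) = e^{\Phi(z,s) - sP(\ph)} \cdot \frac{d(\pi_{s,t})_*\bmu_{f_sW_1}}{d\bmu_{f_t W_2}}(\pi_{s,t} f_s z) \cdot e^{tP(\ph) - \Phi(\pi z, t)},
\]
and combining the two scalar exponentials produces the prefactor $e^{\Phi(z,s) - \Phi(\pi z, t) - (s-t)P(\ph)}$ in \eqref{eqn:push-omega}.

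The proof is essentially bookkeeping, and there is no analytic obstacle beyond what is already packaged in Definition \ref{def:conformal}. The only point requiring a little care is the correct identification of the two flow-map density factors: one comes from the forward conformality relation \eqref{eqn:RN-conf-2} for $f_s$ acting on $W_1$, the other from the backward conformality relation \eqref{eqn:RN-conf} for $f_t$ acting on $W_2$ (equivalently, the density of $(f_{-t})_*\bmu_{f_tW_2}$ with respect to $\bmu_{W_2}$), and the asymmetric appearance of $+\Phi(z,s)$ versus $-\Phi(\pi z, t)$ in the final exponent is a direct reflection of this asymmetry.
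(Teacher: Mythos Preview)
Your proposal is correct and follows essentially the same approach as the paper: both factor $\pi = f_{-t}\circ \pi_{s,t}\circ f_s$, apply the chain rule for Radon--Nikodym derivatives to the three-step composition, and evaluate the two outer factors using the $\ph$-conformality relations \eqref{eqn:RN-conf} and \eqref{eqn:RN-conf-2}. The paper states the chain rule abstractly for a general three-map composition before specializing, while you work directly with the specific maps; the content is identical.
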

\begin{proof}
We use the chain rule for Radon--Nikodym derivatives in the following form: given measures $\nu_0,\nu_1,\nu_2,\nu_3$ and maps $g^1,g^2,g^3$ such that $g^i_* \nu_{i-1} \ll \nu_i$ for $i=1,2,3$, and writing $x_i = g_i(x_{i-1})$, we have
\begin{equation}\label{eqn:RN-chain}
\begin{aligned}
\frac{d(g^3_* g^2_* g^1_* \nu_0)}{d\nu_3}(x_3)
&= \frac{d(g^3_* g^2_* g^1_* \nu_0)}{d(g^3_*g^2_*\nu_1)}(x_3)
\frac{d(g^3_*g^2_*\nu_1)}{d(g^3_* \nu_2)}(x_3)
\frac{d(g^3_* \nu_2)}{d\nu_3}(x_3) \\
&= \frac{d(g^1_* \nu_0)}{d\nu_1}(x_1)
\frac{d(g^2_* \nu_1)}{d\nu_2}(x_2)
\frac{d(g^3_* \nu_2)}{d\nu_3}(x_3).
\end{aligned}
\end{equation}
With $g^1 = f_s$, $g^2 = \pi_t$, and $g^3 = f_{-t}$, \eqref{eqn:RN-chain} gives
\[
\frac{d(\pi_* \bmu_{W_1})}{d\bmu_{W_2}}(\pi z)
= \frac{d(f_s)_* \bmu_{W_1}}{d\bmu_{f_s W_1}}(f_s z)
\frac{d(\pi_{s,t})_* \bmu_{f_s W_1}}{d\bmu_{f_t W_2}}(f_t \pi z)
\frac{d(f_{-t})_* \bmu_{f_t W_2}}{d\bmu_{W_2}}(\pi z).
\]
From $\ph$-conformality, the first and third factors simplify as
\[
\frac{d(f_s)_* \bmu_{W_1}}{d\bmu_{f_s W_1}}(f_s z)
\frac{d(f_{-t})_* \bmu_{f_t W_2}}{d\bmu_{W_2}}(\pi z)
= e^{\Phi(z,s) - sP(\ph)} e^{tP(\ph) - \Phi(\pi z,t)},
\]
completing the proof of Lemma \ref{lem:push-omega}.
\end{proof}

Now as in Theorem \ref{thm:cs-hol} we assume that there is a weak-stable $\delta_0$-holonomy $\pi\colon W_1 \cap \Lambda \to W_2 \cap \Lambda$.
Given $x\in W_1\cap \Lambda$, there is $\tau(x) \in (-\delta_0,\delta_0)$ such that $f_{\tau(x)} \in \Ws(\pi x, \delta_0)$. Fix $\eps>0$ and let $\delta>0$ be given by continuity (Definition \ref{def:cts}).  For each $x\in W_1 \cap \Lambda$ there are $t>0$ and a relatively open set $W_1' \subset W_1$ containing $x$ such that writing $W_2' = \pi(W_1')$, we have the following, using Lemma \ref{lem:push-trans}.
\begin{itemize}
\item $f_{t+\tau(x)}(W_1')$ and $f_t(W_2')$ are $u$-admissible.
\item $d_{C^1}(f_{t+\tau(x)}(W_1'), f_t(W_2')) < \delta$.
\item $\pi_{t+\tau,t} \colon f_{t+\tau(x)}(W_1') \cap \Lambda \to f_t(W_2') \cap \Lambda$ is a weak-stable $\delta$-holonomy.
\item $|\tau(z) - \tau(x)| (\|\ph\| +|P(\ph)|)< \eps$ for all $z\in W_1'$.
\item $\Phi(z,t+\tau(z)) - \Phi(\pi z, t) - \tau(z) P(\ph) = \omega^+(z,\pi z) \pm \eps$ for all $z\in W_1'$.
\end{itemize}
Using the first three properties, continuity, and Lemma \ref{lem:push-omega} (with $s = t+\tau(x)$) gives
\[
\frac{d(\pi_* \bmu_{W_1'})}{d\bmu_{W_2'}}(\pi z) = e^{\Phi(z,t+\tau(x)) - \Phi(\pi z, t) - \tau(x) P(\ph) \pm \eps}.
\]
Then the fourth property gives
\[
\Phi(z,t+\tau(x)) - \tau(x) P(\ph) = P(z,t+\tau(z)) - \tau(z) P(\ph) \pm \eps,
\]
from which we deduce that
\[
\frac{d(\pi_* \bmu_{W_1'})}{d\bmu_{W_2'}}(\pi z) = e^{\Phi(z,t+\tau(z)) - \Phi(\pi z, t) - \tau(z) P(\ph) \pm 2\eps}
= e^{\omega^+(z,\pi z) \pm 3\eps},
\]
where the last equality uses the fifth property above. Since $\bmu_{W_j'}$ and $\bmu_{W_j}$ agree on $W_j'$, we get the same result with $W_j'$ replaced by $W_j$. Finally, since $\eps>0$ was arbitrary, this proves \eqref{eqn:cs-hol}.

\subsection{Product construction: proof of Theorem \ref{thm:product}}

\subsubsection{Proof of the theorem}

In this section we prove Theorem \ref{thm:product}. There are four things to prove.

\begin{enumerate}
\item The equations \eqref{eqn:muq}--\eqref{eqn:muq4} all agree.
\item There is a unique measure $\mu$ such that $\mu|_{R_q} = \mu_q$ for all $q\in \Lambda$.
\item The measure $\mu$ is flow-invariant, nonzero, and finite.
\item $\mu$ is a scalar multiple of the unique equilibrium measure.
\end{enumerate}

For the first, start by defining a function
$h_q\colon \Ru_q\times \Rcs_q \to (0,\infty)$ by
\begin{equation}\label{eqn:hq}
h_q(x,y) := e^{\omega^+([x,y],x) + \omega^-([x,y],y)}.
\end{equation}
Then recalling \eqref{eqn:xyz}, we can write \eqref{eqn:muq} as
\[
\mu_q(Z) = \int_Z h_q(\Psi_q^{-1}(z)) \,d((\Psi_q)_*(\bmu_q\times\bmcs_q))(z),
\]
while \eqref{eqn:muq2} becomes
\[
\mu_q(Z) = \int_{\Psi_q^{-1}(Z)} h_q(x,y) \,d(\bmu_q\times\bmcs_q)(x,y).
\]
These integrals agree by the definition of pushforward, so \eqref{eqn:muq} and \eqref{eqn:muq2} agree. To compare \eqref{eqn:muq2} and \eqref{eqn:muq3} it suffices to observe that given any $y\in \Rcs_q$, there is a weak-stable $\delta$-holonomy $\pi\colon \Ru_q \to \Ru_q(y)$ given by $\pi(x) = [x,y]$, and Theorem \ref{thm:cs-hol} shows that for every continuous $\zeta \colon \Ru_q(y) \to \RR$, we have
\begin{equation}\label{eqn:vary-leaf}
\begin{aligned}
\int_{\Ru_q} \zeta(\pi x) \mathbf{1}_Z([x,y]) \,d\bmu_q(x)
&= \int_{\Ru_q(y) \cap Z} \zeta(z) \,d(\pi_* \bmu_q)(z) \\
&= \int_{\Ru_q(y) \cap Z} \zeta(z) e^{\omega^+([z,q],z)} \,d\bmu_y(z)
\end{aligned}
\end{equation}
We use this with 
\[
\zeta(z) = h_q(\Psi_q^{-1}(z)) = e^{\omega^+(z,[z,q]) + \omega^-(z,y)}
\]
so that
\[
\zeta(\pi x) = e^{\omega^+([x,y],x) + \omega^-([x,y],y)}
\quad\text{and}\quad
\zeta(z) e^{\omega^+([z,q],z)} = e^{\omega^-(z,y)}.
\]
Then the first integral in \eqref{eqn:vary-leaf} is the inner integral in \eqref{eqn:muq2}, while the last integral in \eqref{eqn:vary-leaf} becomes the inner integral in \eqref{eqn:muq3},
so that \eqref{eqn:muq2} and \eqref{eqn:muq3} agree. A similar argument shows the equivalence of \eqref{eqn:muq2} and \eqref{eqn:muq4}.


To prove that the family of measures $\{ \mu_q : q\in \Lambda \}$ defines a unique Borel measure $\mu$ on $\Lambda$, it suffices to show that they are consistent in the sense that
\begin{equation}\label{eqn:consistent}
\mu_q(Z) = \mu_p(Z) \text{ whenever }  Z\subset R_q \cap R_p;
\end{equation}
then one defines $\mu$ by $\mu(Z) = \mu_q(Z)$ whenever $Z\subset R_q$, and extends by additivity.\footnote{See \cite[\S4]{gM70} and \cite[\S4]{gM04} for more details of this standard procedure.}
We observe that when $p \in \Rcs_q$, \eqref{eqn:consistent} follows immediately from \eqref{eqn:muq3} and the fact that $\bmcs_q$ and $\bmcs_p$ agree on $\Rcs_q \cap \Rcs_p$. Similarly, \eqref{eqn:muq4} establishes \eqref{eqn:consistent} when $p \in \Ru_q$. 

Now given any $q,p\in \Lambda$ such that $R_q \cap R_p \neq \emptyset$, we can take $x$ to be any point in the intersection and then observe that the measures $\mu_q$, $\mu_{[x,q]}$, $\mu_x$, $\mu_{[x,p]}$, $\mu_p$ all agree on $R_q \cap R_p$ by the previous paragraph. This proves \eqref{eqn:consistent} and thus establishes existence of a unique $\mu$ as claimed.

The fact that $\mu$ is nonzero and finite follows from the corresponding properties for each $\mu_q$, together with the fact that $\Lambda$ can be covered with finitely many sets $R_q$ by compactness. For flow-invariance, it suffices to show that given $q\in \Lambda$, $Z\subset R_q$, and $t\in \RR$ such that $f_tZ \subset R_q$, we have $\mu_q(f_t Z) = \mu_q(Z)$.
To show this, we observe that on the leaf $\Wcs(x)$, 
we can use \eqref{eqn:pull-scale-ms} and \eqref{eqn:weak-measures}, together with the observation that $-tP(\ph) + \Phi(z,t) = \omega^+(z,f_t z)$, to write
\begin{align*}
\bmcs_{f_t x}(f_t Z) &= \int_{-\delta}^\delta \bms_{f_\tau(f_t x)} (f_t Z) \,d\tau 
=\int_{-\delta}^\delta \bms_{f_t(f_\tau x)} (f_t Z) \,d\tau \\
&= \int_{-\delta}^\delta \int_Z e^{\omega^+(z,f_t z)} \,d\bms_{f_\tau x}(z) \,d\tau
= \int_Z e^{\omega^+(z,f_t z)} \,d\bmcs_x(z) \,d\tau.
\end{align*}
We will use this with $t$ replaced by $-t$ to obtain
\[
\frac{d((f_{t})_* \bmcs_{f_{-t} x})}{d\bmcs_x}(y) = e^{\omega^+(y,f_{-t} y)}
\quad\text{and}\quad
\frac{d\bmcs_x}{d((f_{t})_* \bmcs_{f_{-t} x})}(y) = e^{\omega^+(f_{-t} y,y)}.
\]
Using the second of these together with the definition of $\mu_q$ in \eqref{eqn:muq4}, we have
\begin{align*}
\mu_q(f_t Z) &= \int_{\Ru_q} \int_{\Rcs_q(x)}
e^{\omega^+(y,x)} \mathbf{1}_{f_t Z}(y) \,d\bmcs_x(y) \,d\bmu_q(x) \\
&= \int_{\Ru_q} \int_{\Rcs_q(x)} e^{\omega^+(y,x)} \mathbf{1}_Z(f_{-t} y)
e^{\omega^+(f_{-t} y, y)} \,d((f_t)_* \bmcs_{f_{-t} x})(y) \,d\bmu_q(x).
\end{align*}
Writing $y' = f_{-t} y$ and using the cocycle relation \eqref{eqn:cocycle}, this becomes
\[
\mu_q(f_t Z) =
\int_{\Ru_q} \int_{\Rcs_q(x)} e^{\omega^+(y',x)} \mathbf{1}_Z(y')
\,d\bmcs_{f_{-t} x}(y') \,d\bmu_q(x).
\]
Finally, since for all $y'\in Z$ we also have $f_t(y') \in R_q$ (by our assumption that $f_t Z \subset R_q$), we can use the fact that $\bmcs_{f_{-t}x}$ and $\bmcs_x$ agree on the overlap of the center-stable leaves to obtain
\[
\mu_q(f_t Z) =
\int_{\Ru_q} \int_{\Rcs_q(x)} e^{\omega^+(y',x)} \mathbf{1}_Z(y')
\,d\bmcs_{x}(y') \,d\bmu_q(x)
= \mu_q(Z),
\]
establishing the desired flow-invariance.

Finally, to prove that $\mu$ is a scalar multiple of the unique equilibrium measure, it suffices to show that there is $C_4>0$ such that the upper Gibbs bound $\mu_q(B_t(q,r)) \leq C_4 e^{\Phi(q,t) - tP(\ph)}$ holds for all $q\in \Lambda$ and $t>0$, since then $\mu$ is absolutely continuous with respect to the unique invariant Gibbs probability measure, which is ergodic.

To prove the upper Gibbs bound, observe that for $\delta$ sufficiently small, for each $q\in \Lambda$ and $y\in \Wcs(q,\delta) \cap \Lambda$, writing $W = \Wu(y,L\delta)$ we have
\[
B_t(q,\delta) \cap W \subset B_t(y,2\delta, W) = f^{-t} \Wu(f_t y, 2\delta),
\]
where the last equality uses Lemma \ref{lem:W-Bowen}. Thus $\ph$-conformality gives
\[
\bmu_y(B_t(q,\delta)) \leq \sup_{z\in B_t(q,\delta)} e^{\Phi(z,t) - tP(\ph)} \bmu_{f_t y}(\Wu(f_t y, 2\delta)).
\]
Since $\ph$ is H\"older continuous, it satisfies the Bowen property
\[
\sup_{q\in \Lambda} \sup_{t>0} \sup_{z\in B_t(q,\delta)} |\Phi(z,t) - \Phi(q,t)| < \infty.
\]
Using the previous two estimates and Theorem \ref{thm:properties}, we get
\[
\bmu_y(B_t(q,\delta)) \leq C_5 e^{\Phi(q,t) - tP(\ph)},
\]
where $C_5>0$ does not depend on $q$, $y$, or $t$. Now the upper Gibbs bound for $\mu_q$ follows immediately from \eqref{eqn:muq3} and the fact that $e^{\omega^-(x,y)}$ and $\bmcs_q(\Rcs_q)$ are uniformly bounded above independently of $q\in \Lambda$, $y\in \Ru_q$, and $x\in \Rcs_q(y)$. This completes the proof of Theorem \ref{thm:product}.

\subsubsection{Proof of Corollary \ref{cor:unique}}\label{sec:unique}

To prove Corollary \ref{cor:unique}, we start by observing that given $q\in \Lambda$, it follows from \eqref{eqn:mu-m} that for $\hat\mu_q$-a.e.\ $y\in \Wcs_q$, we have
\begin{equation}\label{eqn:gqy}
\frac{\bmu_y(Z)}{\mmu_y(Z)} = \frac{\bmu_y(\Ru_q(y))}{\mmu_y(\Ru_q(y))} =: g(q,y) \text{ for all Borel } Z \subset \Ru_q(y).
\end{equation}
Observe that $g(q,y)$ is independent of $\delta$ (even though $\Ru_q(y)$ depends on $\delta$) because one obtains the same ratio for every Borel $Z$. Since both systems $\{\bmu_y\}$ and $\{\mmu_y\}$ scale according to Theorem \ref{thm:cs-hol} under weak-stable holonomies, we conclude that $g$ is in fact independent of $y$. More precisely, choosing any $y\in \Wcs_q$ such that \eqref{eqn:gqy} holds, let $\pi\colon \Ru_q \to \Ru_q(y)$ be a weak-stable $\delta_0$-holonomy: then by \eqref{eqn:cs-hol}, for every Borel $Z\subset \Ru_q$ we have
\[
\frac{\bmu_q(Z)}{\mmu_q(Z)} = \frac{\pi_* \bmu_q(\pi Z)}{\pi_* \mmu_q(\pi Z)} = \frac{\int_{\Ru_q(y)} e^{\omega^+(\pi^{-1} z, z)} \,d\bmu_y(z)}
{\int_{\Ru_q(y)} e^{\omega^+(\pi^{-1} z, z)} \,d\mmu_y(z)} = g(q,y) =: g(q).
\]
Now given any $t\in \RR$, since $\bmu_q$ and $\mmu_q$ are fully supported on $\Ru_q$, we can choose a Borel set $Z\subset \Ru_q$ such that $f_t Z \subset \Ru_{f_t q}$, and use the $\ph$-conformality property \eqref{eqn:pull-scale} to deduce that
\[
g(f_t q) = \frac{\bmu_{f_t q}(f_t Z)}{\mmu_{f_t q}(f_t Z)} = 
\frac{\int_Z e^{tP(\ph) - \Phi(z,t)} \,d\bmu_q(z)}
{\int_Z e^{tP(\ph) - \Phi(z,t)} \,d\mmu_q(z)} = g(q).
\]
Thus $g$ is flow-invariant, and since the unique equilibrium measure $\mu$ is ergodic, $g$ is constant $\mu$-a.e.: there is $c>0$ such that $g(q) = c$ for $\mu$-a.e.\ $q\in \Lambda$.  Moreover, since $\supp \mu = \Lambda$ and $g$ is continuous (because the systems $\{\bmu_q\}$ and $\{\mmu_q\}$ are), we conclude that $g(q)=c$ for every $q\in \Lambda$, which completes the proof of Corollary \ref{cor:unique}.

\subsection{Refining in all directions: proof of Theorem \ref{thm:direct}}

Let $m$ be the measure on $\Lambda$ defined by \eqref{eqn:E*}--\eqref{eqn:m}. Observe that it is a Borel measure by Lemma \ref{lem:get-metric} because \eqref{eqn:diam-t-eta} and the local product structure give
\[
\diam B^*_{s,t}(x,r) \leq 
 2rL(\lambda^{s-t_0} + \lambda^{t-t_0}) + \frac {C_6 r}{s+t},
\]
where
\begin{equation}\label{eqn:max-speed}
C_6 := \max_{x\in M} \Big\|\frac d{dt} f_t(x)|_{t=0}\Big\|
\end{equation}
is the maximum speed of the flow, so that $d(x,f_t x) \leq C_6 |t|$ for all $x,t$.

We prove Theorem \ref{thm:direct} by 
showing that $m$ is flow-invariant and has the Gibbs property \eqref{eqn:Gibbs}; we will use the fact that $(\Lambda,F,\ph)$ is already known to have a unique flow-invariant Gibbs probability measure.

\subsubsection{Flow-invariance}\label{sec:flow-inv}

To prove that $m(f_\tau Z) = m(Z)$ for all $Z\subset \Lambda$ and $\tau\in \RR$, we first observe that given $x\in \Lambda$ and $s,t\geq |\tau|$, we have
\[
z\in B^{\pm}_{s,t}(x,r)
\quad\Leftrightarrow\quad
f_\tau z \in B^{\pm}_{s+\tau,t-\tau}(f_\tau x,r).
\]
Moreover, given $z\in B^{\pm}_{s,t}(x,r)$, we have
\[
\beta(f_\tau x, f_\tau z) = \beta(x,z),
\]
from which it follows that
\begin{equation}\label{eqn:flow-B*}
f_\tau B^*_{s,t}(x,r) = B^*_{s+\tau,t-\tau}(f_\tau x, r).
\end{equation}
Now given $Z\subset \Lambda$ and $T\geq |\tau|$, consider the map
\begin{align*}
S\colon \Lambda \times [T,\infty)^2 &\to \Lambda \times [T-|\tau|,\infty)^2, \\
(x,s,t) &\mapsto (f_\tau x, s+\tau, t-\tau).
\end{align*}
By \eqref{eqn:flow-B*}, given $Z\subset \Lambda$ and $\EEE \in \EE^*(Z,T)$, we have
\[
f_\tau Z \subset f_\tau \Big( \bigcup_{(x,s,t) \in \EEE} B^*_{s,t}(x,r) \Big)
\subset \bigcup_{(x,s,t) \in \EEE} B^*_{s+\tau, t-\tau}(f_\tau x, r),
\]
so that $S(\EEE) \subset \EE^*(f_\tau Z,T-|\tau|)$. Since $(s+\tau) + (t-\tau) = s+t$, it follows that
\begin{align*}
m(f_\tau Z) &\leq \lim_{T\to\infty} \inf_{\EEE \in \EE^*(Z,T)} \sum_{(x,s,t) \in \EEE}
\frac {e^{\Phi(f_{-(s+\tau)}(f_\tau x),s+t) - (s+t)P(\ph)}}
{s+t} \\
&= \lim_{T\to\infty} \inf_{\EEE \in \EE^*(Z,T)} \sum_{(x,s,t) \in \EEE}
\frac {e^{\Phi(f_{-s}(x),s+t) - (s+t)P(\ph)}}
{s+t} = m(Z).
\end{align*}
Since $m(f_\tau Z) \leq m(Z)$ for all $\tau \in \RR$ and Borel $Z\subset \Lambda$, we conclude that $m$ is flow-invariant.

\subsubsection{Gibbs property}

We start with a general lemma.

\begin{lemma}\label{lem:B*Bpm}
There are $C_7,T_1,\eta>0$ such that for every flow-invariant measure $\nu$ on $\Lambda$ and every $(x,s,t) \in \Lambda\times [T_1,\infty)^2$, we have
\begin{align}
\label{eqn:B*Bpm}
\nu(B^*_{s,t}(x,r)) &\leq C_7(s+t)^{-1} \nu(B^\pm_{s-\eta,t-\eta}(x,r)) \\
\label{eqn:BpmB*}
\nu(B^\pm_{s,t}(x,r)) &\leq C_7(s+t) \nu(B^*_{s-\eta,t-\eta}(x,r)).
\end{align}
\end{lemma}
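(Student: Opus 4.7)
The plan is to exploit flow-invariance of $\nu$ by disintegrating it in a local flow box about $x$, and to reduce both inequalities to the observation that the flow-direction constraint $|\beta(x,y)|<r/(s+t)$ in \eqref{eqn:B*} makes $B^*_{s,t}(x,r)$ a slice of $B^\pm_{s,t}(x,r)$ whose flow thickness scales as $(s+t)^{-1}$. The underlying identity is $\beta(x,f_\tau y) = \beta(x,y)-\tau$, which follows from $[x,f_\tau y] = f_\tau[x,y]$ (by flow-invariance of $\Wcs(x)$ and the identity $\Wu(f_\tau y) = f_\tau \Wu(y)$) together with the defining property $f_{\beta(x,y)}([x,y])\in\Ws(x)$.

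Since $\Lambda$ contains no fixed points and is compact, I would next choose a uniform $\delta_*>0$ and, for each $x\in\Lambda$, a transverse disc $D_x$ such that $\Psi_x\colon (p,\tau)\mapsto f_\tau p$ is a diffeomorphism from $D_x\times(-\delta_*,\delta_*)$ onto a flow box $U_x\supset B(x,2r)$; flow-invariance of $\nu$ then yields a disintegration $\nu|_{U_x}=(\Psi_x)_*(\nu_{D_x}\otimes\Leb)$. For \eqref{eqn:B*Bpm}, the $\beta$ identity shows that for each $p\in D_x$ the flow-slice $\{\tau:f_\tau p\in B^*_{s,t}(x,r)\}$ lies inside an interval of Lebesgue length $\leq 2r/(s+t)$, so
\[
\nu(B^*_{s,t}(x,r)) \leq \tfrac{2r}{s+t}\,\nu_{D_x}\bigl(P^*_{s,t}\bigr),
\]
where $P^*_{s,t}$ is the transversal footprint of $B^*_{s,t}(x,r)$ in $D_x$. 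Showing that each $p\in P^*_{s,t}$ lies in a flow-interval of length $\geq\eta_0$ whose image sits in $B^\pm_{s-\eta,t-\eta}(x,r)$ converts the right-hand side to $(2r/\eta_0(s+t))\nu(B^\pm_{s-\eta,t-\eta}(x,r))$, yielding \eqref{eqn:B*Bpm}. For \eqref{eqn:BpmB*}, the dual covering argument packs $B^\pm_{s,t}(x,r)$ with pairwise disjoint flow-translates $f_{k\Delta}B^*_{s-\eta,t-\eta}(x,r)$ of step $\Delta = 2r/(s+t)$; the $\beta$ identity forces distinct translates to be disjoint (two of them meet only when $|k-j|\Delta<2r/(s+t)$, whence $k=j$), and at most $C_7(s+t)$ translates cover the $O(r)$ flow thickness of $B^\pm$, so flow-invariance gives $\nu(B^\pm)\leq C_7(s+t)\nu(B^*)$.

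The main obstacle in both directions is the lower bound on the flow-interval inclusion: the direct triangle-inequality estimate $d(f_{\sigma+\tau}p,f_\sigma x) \leq d(f_{\sigma+\tau}p,f_{\sigma+\tau}x) + d(f_{\sigma+\tau}x,f_\sigma x)$ picks up an extra $C_6\eta$ in the radius, where $C_6$ is the maximum flow speed from \eqref{eqn:max-speed}, so only the weaker inclusion into $B^\pm_{s-\eta,t-\eta}(x,r+C_6\eta)$ is immediate. Absorbing this slack to recover the same radius $r$ on both sides is the one subtle bookkeeping step; I would handle it by first proving the inclusion with Bowen radius $r/2$ in place of $r$ (choosing $\eta$ so that $r/2 + C_6\eta \leq r$), and then using a uniform covering of $B^\pm_{s,t}(x,r)$ by boundedly many translates of Bowen balls of radius $r/2$ centered at nearby points of $\Lambda$, so that the change of radius costs only a uniform multiplicative constant that can be absorbed into $C_7$.
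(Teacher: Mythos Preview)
Your core strategy matches the paper's: both exploit the identity $\beta(x,f_\tau z)=\beta(x,z)+\tau$ together with flow-invariance of $\nu$, and both reduce \eqref{eqn:B*Bpm} to packing $\asymp(s+t)$ disjoint flow-translates of $B^*_{s,t}(x,r)$ inside $B^\pm_{s-\eta,t-\eta}(x,r)$, and \eqref{eqn:BpmB*} to covering $B^\pm_{s,t}(x,r)$ by $\asymp(s+t)$ flow-translates of $B^*_{s-\eta,t-\eta}(x,r)$. Your flow-box disintegration is a mild repackaging of this; the paper works directly with the translates $f_{k\gamma}B^*$ and never introduces a transverse disc.

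The real difference is in how the radius slack is absorbed, and here your proposed fix is both heavier and not fully justified. You route the triangle inequality through $f_{\sigma+\tau}x$, incurring a loss of $C_6\eta$, and then propose to repair this by covering $B^\pm_{s,t}(x,r)$ by boundedly many two-sided Bowen balls of radius $r/2$ centered at points of $\Lambda$. That covering bound (uniform in $x,s,t$) is true for hyperbolic sets but is itself a nontrivial fact you have not supplied, and it is unnecessary. The paper instead routes the triangle inequality through the bracket: for $z\in B^\pm_{s,t}(x,r)$, set $y=[x,z]$ and $y'=f_{\beta(x,z)}(y)\in\Ws(x)$; then for $\tau\in[-s+\eta,t-\eta]$ the contraction estimates \eqref{eqn:leaves-contract} give
\[
d(f_\tau x,f_\tau z)\le 2C_0 rL\,\lambda^{\eta} + C_6\,|\beta(x,z)|.
\]
The first term \emph{decays} in $\eta$, so choosing $\eta,T_1$ appropriately yields $B^*_{s,t}(x,r)\subset B^\pm_{s-\eta,t-\eta}(x,r/2)$ directly; the remaining $r/2$ of slack is then spent on a short flow displacement $d(z,f_\tau z)<r/2$ for $|\tau|<\xi$, which is exactly the ``flow-interval of length $\geq\eta_0$'' step you want. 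The same estimate drives the covering for \eqref{eqn:BpmB*}. I would replace your last paragraph with this hyperbolic estimate; it removes the need for any Bowen-ball covering-multiplicity argument.
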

\begin{proof}
Given $z\in B^\pm_{s,t}(x,r)\cap \Lambda$, the local product structure gives $y := [x,z] \in \Wu(z,rL)$ and $y' = f_{\beta(x,z)}(y) \in \Ws(x,rL)$ with $f_{-s}(y') \in \Ws(f_{-s} x, rL)$ and $f_t(y) \in \Wu(f_t z, rL)$. Given $\eta \in [0,\min(s,t)]$ and $\tau \in [-s+\eta,t-\eta]$, we deduce from \eqref{eqn:leaves-contract} that\footnote{In fact we are using the analogue of \eqref{eqn:leaves-contract} for the original metric, which is not necessarily adapted; this is the reason for the inclusion of the constant $C_0$.}
\[
d(f_\tau z, f_\tau y) \leq C_0 rL\lambda^\eta
\quad\text{and}\quad
d(f_\tau y', f_\tau x) \leq C_0 rL\lambda^\eta.
\]
Since we also have $d(f_\tau y', f_\tau y) \leq C_6 |\beta(x,z)|$, the triangle inequality gives
\begin{equation}\label{eqn:dbeta}
d(f_\tau x, f_\tau z)
\leq 2C_0 rL \lambda^\eta + C_6 |\beta(x,z)|
\end{equation}
for all $z\in B^\pm_{s,t}(x,r)$ and $\tau \in [-s+\eta,t-\eta]$.

Now we prove \eqref{eqn:B*Bpm}. By \eqref{eqn:dbeta} there are $T_1 > \eta > 0$ such that for all $z\in B^*_{s,t}(x,r)$ and $\tau \in [-s+\eta,t-\eta]$, we have
\[
d(f_\tau x, f_\tau z) \leq 2C_0 rL\lambda^\eta + C_6 T_1^{-1} < r/2.
\]
In other words, we have
\begin{equation}\label{eqn:B*/2}
B^*_{s,t}(x,r) \subset \{ z \in B^\pm_{s-\eta,t-\eta}(x,r/2) : |\beta(x,z)| < r/(s+t) \}.
\end{equation}
Now let $\xi>0$ be such that $d(z,f_\tau z) < r/2$ for all $z\in \Lambda$ and $|\tau| < \xi$; then we have $f_\tau(x) \in B^*_{s,t}(x,r)$ for all $|\tau| < \xi$, and
\begin{equation}\label{eqn:inBpm}
f_\tau(z) \in B^{\pm}_{s-\eta,t-\eta}(x,r)
\text{ for all $z\in B^*_{s,t}(x,r)$ and $|\tau|<\xi$}.
\end{equation}
Consider $(x,s,t) \in \Lambda \times [T_1,\infty)^2$ and let $\gamma = \frac {2r}{s+t}$. Then for all $0\leq k < N := \lfloor \frac \xi\gamma \rfloor = \lfloor \frac \xi{2r}(s+t)\rfloor$, we have $f_{k\gamma} B^*_{s,t}(x,r) \subset B^{\pm}_{s-\eta,t-\eta}(x,r)$ by \eqref{eqn:inBpm}, and moreover these $N$ sets are disjoint because $\beta(x,f_{k\gamma} z) = k\gamma + \beta(x,z) = k\gamma \pm \frac r{s+t}$ for all $z\in B^*_{s,t}(x,r)$. These sets all have the same measure by flow-invariance, and thus
$\nu(B^{\pm}_{s-\eta,t-\eta}(x,r)) \geq N \nu(B^*_{s,t}(x,r))$, which proves \eqref{eqn:B*Bpm}.

Now we prove \eqref{eqn:BpmB*}.
There is $C_8>0$ such that $\beta(x,z) \leq C_8 r$ for all $x\in \Lambda$ and $z\in B(x,r)$. Given $(x,s,t) \in \Lambda\times [T_1,\infty)^2$, let $\gamma = \frac r{s+t}$, $N = \lceil C_8r/\gamma \rceil = \lceil C_8(s+t) \rceil$,  and $x_k = f_{k\gamma}(x)$ for all $|k| \leq N$. Then for every $z\in B^\pm_{s,t}(x,r)$, there is $|k| \leq N$ such that $|\beta(x_k, z)| < \gamma = \frac r{s+t}$. For every $\tau \in [-s+\eta,t-\eta]$, \eqref{eqn:dbeta} gives
\[
d(f_\tau x_k, f_\tau z) \leq 2C_0 rL \lambda^\eta + C_6/(s+t) < r/2,
\]
and thus $z \in B^*_{s-\eta,t-\eta}(x_k,r/2) \subset f_{k\gamma} B^*_{s-\eta,t-\eta}(x,r)$. By flow-invariance we have
\[
\nu(B^\pm_{s,t}(x,r)) \leq \nu \Big( \bigcup_{k=-N}^N f_{k\gamma} B^*_{s-\eta,t-\eta}(x,r) \Big) = (2N+1) \nu(B^*_{s-\eta,t-\eta}(x,r)),
\]
which proves \eqref{eqn:BpmB*}.
\end{proof}

\begin{lemma}\label{lem:*-gibbs}
Let $\mu$ be the unique flow-invariant Gibbs probability measure for $(\Lambda,F,\ph)$. Then there is $C_9>0$ such that for every $(x,s,t) \in \Lambda\times [T_1,\infty)^2$, we have
\begin{equation}\label{eqn:*-gibbs}
\mu(B^*_{s,t}(x,r)) = C_9^{\pm 1} (s+t)^{-1} e^{\Phi(f_{-s} x, s+t) - (s+t)P(\ph)}.
\end{equation}
\end{lemma}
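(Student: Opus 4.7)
The plan is to obtain \eqref{eqn:*-gibbs} by sandwiching $\mu(B^*_{s,t}(x,r))$ between the $\mu$-measures of ordinary two-sided Bowen balls $B^\pm_{s',t'}(x,r)$ via Lemma \ref{lem:B*Bpm}, and then expressing each of the latter in terms of the standard (one-sided) Gibbs bound that $\mu$ already enjoys by \eqref{eqn:Gibbs}.

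First I would convert $B^\pm$ to a one-sided Bowen ball: the definition gives $B^\pm_{s,t}(x,r) = f_s\big(B_{s+t}(f_{-s}x,r)\big)$, so by flow-invariance of $\mu$ together with \eqref{eqn:Gibbs},
\[
\mu(B^\pm_{s,t}(x,r)) = \mu(B_{s+t}(f_{-s}x,r)) = Q^{\pm 1} e^{\Phi(f_{-s}x,\,s+t) - (s+t)P(\ph)}.
\]
Next I would apply the two inequalities of Lemma \ref{lem:B*Bpm}. The first gives the upper bound
\[
\mu(B^*_{s,t}(x,r)) \leq \frac{C_7}{s+t}\, \mu(B^\pm_{s-\eta,t-\eta}(x,r)),
\]
and the second, applied after replacing $(s,t)$ by $(s+\eta,t+\eta)$, gives the lower bound
\[
\mu(B^*_{s,t}(x,r)) \geq \frac{1}{C_7(s+t+2\eta)}\, \mu(B^\pm_{s+\eta,t+\eta}(x,r)).
\]
Substituting the one-sided Gibbs formula above into each of these yields matching expressions of the form $(s+t+O(\eta))^{-1} e^{\Phi(f_{-s\pm\eta}x,\, s+t\mp 2\eta) - (s+t\mp 2\eta)P(\ph)}$.

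The last step is to absorb the $\eta$-shifts into a multiplicative constant. Since $\|\ph\|_\infty < \infty$, for any shift by $|\delta| \leq 2\eta$ in the length of the orbit segment we have $|\Phi(f_{-s+a}x,\,s+t+b) - \Phi(f_{-s}x,\,s+t)| \leq 2\eta\|\ph\|_\infty$ (write each as an integral over a subinterval of length $s+t\pm 2\eta$ and compare), and similarly the pressure contribution changes by at most $2\eta|P(\ph)|$; also $\frac{1}{s+t+2\eta}$ and $\frac{1}{s+t}$ differ by a factor in $[\frac12, 2]$ once $s+t \geq 2T_1 \geq 2\eta$ (enlarge $T_1$ if needed). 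These uniform factors together give a constant $C_9$ depending only on $Q,C_7,\eta,\|\ph\|_\infty,P(\ph)$ that validates \eqref{eqn:*-gibbs} in both directions.

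There is no genuine obstacle here; the work is bookkeeping around the small time shifts $\eta$ introduced by Lemma \ref{lem:B*Bpm} and the slight change of basepoint from $f_{-s}x$ to $f_{-s\pm\eta}x$. The one point requiring mild care is to ensure the \emph{polynomial} factor $(s+t)^{-1}$ on the right of \eqref{eqn:*-gibbs} appears on both the upper and lower bounds with only a bounded ratio, which is why we required $s,t \geq T_1$ and why we picked up a factor $(s+t)$ rather than, say, something like $e^{\eps(s+t)}$ on the right-hand side of Lemma \ref{lem:B*Bpm}.
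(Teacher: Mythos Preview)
Your proposal is correct and follows essentially the same route as the paper: convert $B^\pm_{s,t}(x,r)$ to a one-sided Bowen ball via flow-invariance, apply the Gibbs bound \eqref{eqn:Gibbs}, sandwich $\mu(B^*_{s,t}(x,r))$ using the two inequalities of Lemma \ref{lem:B*Bpm}, and absorb the $\eta$-shifts into a constant using $\|\ph\|$, $|P(\ph)|$, and the bound $(s+t)/(s+t+2\eta)\geq T_1/(T_1+\eta)$. The only cosmetic difference is that the paper already has $T_1>\eta$ from Lemma \ref{lem:B*Bpm}, so no enlargement of $T_1$ is needed.
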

\begin{proof}
Since $\mu$ is a Gibbs measure, there is $Q>0$ such that for all $(x,s,t) \in \Lambda \times [0,\infty)^2$, we have
\begin{multline}\label{eqn:mu-gibbs}
\mu(B^{\pm}_{s,t}(x,r)) = \mu(f_{-s}B^{\pm}_{s,t}(x,r)) = \mu(B_{s+t}(f_{-s} x,r)) \\
=Q^{\pm 1} e^{\Phi(f_{-s} x, s+t) - (s+t)P(\ph)}.
\end{multline}
Combining \eqref{eqn:B*Bpm} and the upper bound in \eqref{eqn:mu-gibbs} gives
\begin{align*}
\mu(B^*_{s,t}(x,r)) &\leq C_7Q (s+t)^{-1} e^{\Phi(f_{-s+\eta} x, s+t-2\eta) - (s+t-2\eta)P(\ph)} \\
&\leq C_7Q (s+t)^{-1} e^{\Phi(f_{-s} x, s+t) - (s+t)P(\ph)} e^{2\eta(\|\ph\| + |P(\ph)|)}
\end{align*}
which proves the upper bound in \eqref{eqn:*-gibbs}. For the lower bound we use \eqref{eqn:BpmB*} and the lower bound in \eqref{eqn:mu-gibbs} to get
\begin{align*}
\mu(B^*_{s,t}(x,r)) &\geq C_7^{-1} (s+t+2\eta)^{-1} \mu(B^\pm_{s+\eta,t+\eta}(x,r)) \\
&\geq \frac{(s+t)e^{\Phi(f_{-s} x, s+t) - (s+t)P(\ph)}}
{ C_7 Q (s+t + 2\eta) (s+t) e^{2\eta(\|\ph\| + |P(\ph)|)}},
\end{align*}
which completes the proof since $(s+t)/(s+t+2\eta) \geq T_1/(T_1+2\eta)$.
\end{proof}

Now we can prove that $m$ has the Gibbs property. For the lower bound, consider  any Borel set $Z\subset \Lambda$, any $T\geq T_1$, and any $\EEE \in \EE^*(Z,T)$: by \eqref{eqn:*-gibbs}, we have
\[
\sum_{(x,s,t) \in \EEE} \frac{e^{\Phi(f_{-s}(x),s+t) - (s+t)P(\ph)}}{s+t}
\geq \sum_{(x,s,t) \in \EEE} \frac{1}{C_9} \mu(B^*_{s,t}(x,r))
\geq \frac{\mu(Z)}{C_9}.
\]
Taking an infimum over all such $\EEE$ and sending $T\to\infty$ gives $m(Z) \geq \mu(Z)/C_9$. Since $\mu$ has the Gibbs property, this proves the lower Gibbs bound for $m$.

To prove the upper Gibbs bound, fix $x\in \Lambda$ and $\tau>0$, and let $Z = B_\tau(x,r/2) \cap \Lambda$.
Given $T>\tau$, let $E_T\subset Z$ be 
maximal with the property that for any $y,z\in E_T$ with $z\neq y$, we have $z\notin B^*_{T,T}(y,r)$;
then $Z\subset \bigcup_{y\in E_T} B^*_{T,T}(y,r)$, while at the same time the sets $B^*_{T,T}(y,r/2)$ are disjoint, and we have
\begin{align*}
m(Z) &\leq \ulim_{T\to\infty} \sum_{y\in E_T} \frac 1{s+t} e^{\Phi(f_{-T}y,2T) - 2TP(\ph)} \\
&\leq \ulim_{T\to\infty} \sum_{y\in E_T} C_9(r/2) \mu(B_{T,T}^*(y,r/2)) \\
&\leq \ulim_{T\to\infty} C_9(r/2) \mu \Big( \bigcup_{y\in E_T} B^*_{T,T}(y,r/2) \Big).
\end{align*}
Given $y\in E_T$, observe that $B^*_{T,T}(y,r/2) \subset B_\tau(y,r/2) \subset B_\tau(x,r)$, and thus the above computation shows that
\[
m(B_\tau(x,r/2)) \leq C_9(r/2) \mu(B_\tau(r)).
\]
Thus the upper Gibbs bound for $m$ (at scale $r/2$) follows from the upper Gibbs bound for $\mu$ (at scale $r$).

\bibliographystyle{amsalpha}
\bibliography{es-formula-ref}

\end{document}